\documentclass[11pt]{article}

\usepackage{amsmath,amssymb,amsfonts,amsthm}
\usepackage{mathrsfs,bm}
\usepackage{graphicx,float}
\usepackage{xcolor}
\usepackage[numbers,sort&compress]{natbib}
\usepackage{enumitem}
\usepackage{hyperref}

\allowdisplaybreaks

\hypersetup{
    colorlinks=true,
    linkcolor=blue,
    citecolor=red,
    urlcolor=blue
}

\newcommand{\hK}[1]{\|#1\|_K}
\newcommand{\RN}{\mathbb R^N}
\newcommand{\Ksc}{\mathcal K_{(o)}^{ss}}

\newtheorem{theorem}{Theorem}[section]
\newtheorem{lemma}[theorem]{Lemma}
\newtheorem{remark}[theorem]{Remark}
\newtheorem{proposition}[theorem]{Proposition}

\numberwithin{equation}{section}

\title{Sharp $L^2$ stability for anisotropic radial Caffarelli-Kohn-Nirenberg inequalities}
\author{Zhenzhen Wei, Denghui Wu$^{*}$}
\date{}

\begin{document}

\maketitle

\begingroup
\makeatletter
\renewcommand{\thefootnote}{}
\renewcommand{\@makefntext}[1]{\noindent#1}
\makeatother

\footnotetext{\small
\textit{2020 Mathematics Subject Classification:}
26D10; 46E35; 35A23; 52A40.

\textit{Keywords:} Caffarelli-Kohn-Nirenberg inequalities; anisotropic radial derivative; sharp stability; Gamma measure.

$^{*}$\textit{Corresponding author.}
}

\endgroup
\begin{abstract}
We establish quantitative stability for radial Caffarelli-Kohn-Nirenberg inequalities involving the anisotropic radial derivative and the Minkowski functional of a smooth, strictly convex body in $\mathbb R^N$, $N\ge2$. Here, the convex body need not be origin-symmetric. For the $L^2$-Caffarelli-Kohn-Nirenberg inequality, we establish the sharp stability constant $|b+1-a|$ for $a\ne b+1$, and classify all equality cases in the natural weighted completion spaces, by optimizing over the angular coefficients and a common dilation parameter. We also obtain the sharp stability constants $2$ and $4$ for the scale-invariant and scale-noninvariant $L^2$-Heisenberg inequality, respectively. For $2\le p<\infty$, we also establish weighted stability of the anisotropic radial $L^p$-Caffarelli-Kohn-Nirenberg inequalities when $a\neq b+1$ and $(b+1-a)(N-p(b+1))\geq0$, and the anisotropic radial $L^p$-Heisenberg inequalities. As a byproduct, we establish weighted $L^p$-Poincar\'e inequalities for Gamma measures, which serve as the main tool for our stability estimates.
\end{abstract}

\section{Introduction}\label{s1}

This paper develops a quantitative stability theory for anisotropic
Caffarelli-Kohn-Nirenberg (CKN) inequalities and Heisenberg uncertainty
principles associated with the Minkowski functional of a convex body.
See, e.g., \cite{CFL,CFLL,WW22,WW24}
for stability results concerning the classical CKN inequalities, and
\cite{Wei1,Wei2} for the anisotropic CKN inequalities considered here.

The CKN inequalities, introduced by Caffarelli, Kohn and Nirenberg
\cite{CKN}, form a fundamental class of weighted interpolation
inequalities in Euclidean spaces.
These inequalities include the Hardy inequality and
the Heisenberg uncertainty principle as special cases.
Their sharp constants, extremal functions and symmetry properties
have been studied extensively
(see, e.g., \cite{CW,FS,Lam19,Lam21,LL,LewisLiLi2012}).
Related sharp weighted Trudinger-Moser and CKN inequalities
were investigated in \cite{DgLL}.
Related identities, remainder terms and improvements
can be found in \cite{ACP,CFLL,DFLL}.

Once the sharp inequality is known, a natural question is whether
its deficit controls the distance to the extremal family.
The work of Bianchi and Egnell \cite{BE} established
a quantitative stability estimate for the Sobolev inequality. Related Sobolev inequalities with remainder terms
were studied in \cite{BL,LW00}.
For subsequent stability results concerning Sobolev type
inequalities and CKN inequalities, see, e.g.,
\cite{CLT24,CFL,CLT242,CLT243,CLTW,DLLN,WW22}.

In this paper, we work on the Euclidean space $\mathbb R^N$,
$N\ge2$, and consider real-valued functions.
We use $x\cdot y$ for the Euclidean
inner product and $C_c^\infty(\Omega)$ for the smooth functions with
compact support in an open set $\Omega$.
A convex body is a compact convex set with nonempty interior.
Let $\Ksc$ denote the class of smooth, strictly convex bodies in $\mathbb R^N$ containing the origin $o$ in their interior.
For a fixed $K\in\Ksc$, we denote by $\partial K$ its boundary.
The Minkowski functional of $K$ is
\[
\|x\|_K=\inf\{t>0:x\in tK\},\quad x\in\mathbb R^N,
\]
where $tK=\{tz:z\in K\}$.
We do not assume that $K$ is origin-symmetric. Its
Minkowski functional need not be even, and need not be a norm.
In this framework, Figalli, Maggi and Pratelli \cite{FMP10,FMP}
proved quantitative versions of the anisotropic
isoperimetric and Sobolev inequalities.
See \cite{DPDG,DLL,LLZ19,LLZ20,LiYan23,LSXZ,RS,RSY} for more information about
anisotropic isoperimetric, Hardy, and CKN inequalities
in the setting of norms and quasi-norms.

This paper studies the stability of the anisotropic radial CKN
inequalities associated with Minkowski functionals.
The underlying inequalities and their extremals were obtained
in \cite{Wei1,Wei2}.
Do, Lam, Lu and Nguyen \cite{DLLN} established sharp $L^2$
stability and $L^p$ extensions for full-gradient CKN inequalities.
In the present radial derivative setting, the energy does not control
angular variation, and the extremal family allows arbitrary angular coefficients.
We characterize all equality cases by combining radial spectral analysis with minimization over the common parameter.

For $x\ne o$, we set $r=\|x\|_K,$
$\sigma=\sigma_K(x)=\frac{x}{\|x\|_K}\in\partial K$,
and $\mathcal R_K(u)(x)=\frac{x\cdot\nabla u(x)}{\|x\|_K}.$
In anisotropic polar coordinates, one has
$\mathcal R_K(u)(r,\sigma)=\partial_r u (r,\sigma)$. Let $\mu_K$ be the finite Borel measure on $\partial K$ characterized by
\[
\int_{\mathbb R^N}f(x)dx
=\int_{\partial K}\int_0^\infty
f(r\sigma)r^{N-1}drd\mu_K(\sigma)
\]
for every nonnegative measurable function $f$.
For $1<p<\infty$ and $a,b\in\mathbb R$, let $\mathcal C^p_{K,a,b}$ be the completion of $C_c^\infty(\mathbb R^N\setminus\{o\})$  with respect to
\begin{equation}\label{eq:completion-norm}
\|u\|_{\mathcal C^p_{K,a,b}}^p
=\int_{\mathbb R^N}\bigl|\mathcal R_K(u)\bigr|^p r^{-bp} dx
+\int_{\mathbb R^N}|u|^p r^{-ap} dx.
\end{equation}
Every element of $\mathcal C^p_{K,a,b}$ has a unique
representative in
$L^p_{\rm loc}(\mathbb R^N\setminus\{o\})$.
For $\mu_K$-almost every $\sigma\in\partial K$,
the function $r\mapsto u(r\sigma)$ is locally
absolutely continuous on $(0,\infty)$, with derivative
$\mathcal R_K(u)(r\sigma)$
(see \cite[Lemma~4.1]{Wei2}).
For $p=2$, we define
\begin{align*}
I_a(u)=\int_{\mathbb R^N}|u|^2r^{-2a} dx,\ \
D_b(u)=\int_{\mathbb R^N}\bigl|\mathcal R_K(u)\bigr|^2r^{-2b} dx,\ \
I_{a,b}(u)=\int_{\mathbb R^N}|u|^2r^{-a-b-1} dx.
\end{align*}
If $b+1-a\neq0$, we set
\begin{equation}\label{eq:unified-parameters}
\gamma=\frac{\operatorname{sgn}(b+1-a)|N-2b-2|-\bigl(N-2b-2\bigr)}{2},\quad
\nu=1+\frac{|N-2b-2|}{|b+1-a|}\ge1.
\end{equation}
The sharp radial CKN inequality was proved in \cite{Wei1}
with constant
$\frac{\nu|b+1-a|}{2}=\frac{|N-2b-2|+|b+1-a|}{2}$.
Hence $\mathcal C^2_{K,a,b}$ embeds continuously into
$L^2(\RN,r^{-a-b-1}dx)$.
The extremal family is
\begin{equation}\label{eq:unified-profile}
\mathcal M_{K,a,b}
=\{\Phi(\sigma_K)g_\eta(\|x\|_K):\eta>0,\ \Phi\in L^2(\partial K,d\mu_K)\},
\end{equation}
where $g_\eta(r)=r^\gamma e^{-\frac{\eta r^{b+1-a}}{|b+1-a|}}$ for $\eta>0$.
For $u\neq 0$, we define
\begin{gather}
\delta_{K,a,b}(u)=\sqrt{D_b(u)I_a(u)}-\frac{\nu|b+1-a|}{2}I_{a,b}(u),
\label{eq:unified-deficit}\\
d_{K,a,b}(u,\mathcal M_{K,a,b})
= \inf_{v\in\mathcal M_{K,a,b}}I_{a,b}(u-v)
^{\frac{1}{2}}.
\nonumber
\end{gather}
For angular coefficients, we write
$\langle\Phi,\Psi\rangle=\int_{\partial K}\Phi\Psi d\mu_K,$ and
$\|\Phi\|_2^2=\int_{\partial K}\bigl|\Phi\bigr|^2d\mu_K.$

Our first main result is the sharp $L^2$ stability
for the anisotropic radial CKN inequality in
the full range $a\ne b+1$, with the characterization of equality cases.

\begin{theorem}\label{thm:unified}
Let $K\in\Ksc$ and $a\ne b+1$.
For every nonzero $u\in\mathcal C^2_{K,a,b}$,
\begin{equation}\label{eq:unified-stability}
\delta_{K,a,b}(u)\ge |b+1-a|\,d_{K,a,b}(u,\mathcal M_{K,a,b})^2.
\end{equation}
The constant $|b+1-a|$ is sharp, including on $C_c^\infty(\mathbb R^N\setminus\{o\})$.
Equality holds if and only if
\begin{equation}\label{eq:unified-equality}
u(r,\sigma)=g_\lambda(r)\bigl[\Phi_0(\sigma)+\Phi_1(\sigma)\bigl(s-\nu\bigr)\bigr],
\quad s=\frac{2\lambda r^{b+1-a}}{|b+1-a|},
\end{equation}
for $\lambda=\lambda_2(u):=\sqrt{\frac{D_b(u)}{I_a(u)}}>0$,
and $\Phi_0,\Phi_1\in L^2(\partial K,d\mu_K)$ not both zero
and satisfying
\begin{equation}\label{eq:unified-conditions}
\langle\Phi_0,\Phi_1\rangle=0,\quad
\|\Phi_0\|_2^2\ge\nu\|\Phi_1\|_2^2.
\end{equation}
No nonzero compactly supported smooth function attains equality.
\end{theorem}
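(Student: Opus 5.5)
The plan is to reduce \eqref{eq:unified-stability} to a one-dimensional spectral inequality for the Gamma measure $s^{\nu-1}e^{-s}\,ds$ on $(0,\infty)$, fibred over $\partial K$. Set $\beta=b+1-a\ne0$. Fix $u\ne0$ and $\lambda=\lambda_2(u)$, and write $u=g_\lambda(r)\,w(r,\sigma)$. I would first prove the completing-the-square identity for $u\in C_c^\infty(\mathbb R^N\setminus\{o\})$ and then extend it by density in $\mathcal C^2_{K,a,b}$, using the a.e.\ radial absolute continuity (\cite[Lemma~4.1]{Wei2}).

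The identity comes from expanding
\[
\int_{\mathbb R^N}\Bigl|\mathcal R_K(u)\,r^{-b}-\frac{g_\lambda'}{g_\lambda}\,u\,r^{-b}\Bigr|^2dx
=\int_{\mathbb R^N}|\partial_r w|^2g_\lambda^2r^{-2b}\,dx .
\]
One integrates the cross term by parts in $r$, fibrewise in anisotropic polar coordinates with respect to $r^{N-1}dr\,d\mu_K$. By the choice of $\gamma$ in \eqref{eq:unified-parameters}, this should give
\[
D_b(u)+\lambda^2I_a(u)-\nu|\beta|\lambda\, I_{a,b}(u)=\int_{\mathbb R^N}|\partial_r w|^2g_\lambda^2r^{-2b}\,dx .
\]
Since $\lambda^2=D_b/I_a$, the left side equals $2\lambda^2I_a-\nu|\beta|\lambda I_{a,b}=2\lambda\,\delta_{K,a,b}(u)$. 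Next I change variables to $s=2\lambda r^{\beta}/|\beta|$. A direct computation should show
\[
I_{a,b}(u)=c\int_{\partial K}\int_0^\infty|w|^2s^{\nu-1}e^{-s}\,ds\,d\mu_K,
\qquad
\int|\partial_rw|^2g_\lambda^2r^{-2b}=c\,2\lambda|\beta|\int_{\partial K}\int_0^\infty|\partial_sw|^2s^{\nu}e^{-s}\,ds\,d\mu_K,
\]
with the same constant $c=c(\lambda,\beta)$. The exponents should match exactly because $\nu-1=|N-2b-2|/|\beta|$.

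The key tool is the weighted Poincaré inequality for the Gamma measure: for every $\sigma$,
\[
\int_0^\infty|\partial_s w|^2s\,d\gamma_\nu\ge\int_0^\infty|w-\bar w(\sigma)|^2d\gamma_\nu ,
\]
where $d\gamma_\nu=s^{\nu-1}e^{-s}ds$ and $\bar w$ is the $\gamma_\nu$-mean. This holds because the Laguerre operator has spectrum $\{0,1,2,\dots\}$ with eigenfunctions $L^{(\nu-1)}_k$. Equality holds iff $w$ is affine in $s$, i.e.\ $w=\Phi_0+\Phi_1(s-\nu)$, since $L_1^{(\nu-1)}\propto s-\nu$ and the mean of $s$ under $\gamma_\nu$ is $\nu$. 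Combining gives
\[
\delta_{K,a,b}(u)\ge|\beta|\,I_{a,b}\bigl(u-g_\lambda\Phi_0\bigr)\ge|\beta|\,d_{K,a,b}(u,\mathcal M_{K,a,b})^2 ,
\]
with $\Phi_0=\bar w$, which proves \eqref{eq:unified-stability}.

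For the equality cases, the first inequality is an equality iff $w$ is affine in $s$. For such $w$, the constraint $\lambda=\lambda_2(u)$ should be equivalent to $\langle\Phi_0,\Phi_1\rangle=0$; I would get this by computing $D_b$ and $I_a$ via Gamma moments. The second inequality is the main obstacle. One must show that
\[
\inf_{\eta>0,\ \Phi}I_{a,b}(u-\Phi g_\eta)
\]
is attained at $\eta=\lambda$ exactly when $\|\Phi_0\|_2^2\ge\nu\|\Phi_1\|_2^2$. My first attempt: for fixed $\eta$, optimize over $\Phi$ explicitly by fibrewise projection onto $g_\eta$. This reduces the problem to maximizing
\[
F(\eta)=\frac{\|\langle u,g_\eta\rangle_{I_{a,b}}\|_2^2}{I_{a,b}(g_\eta)} .
\]
The inner products are Gamma integrals that are computable in closed form. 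With $t=\eta/\lambda$, $F$ should become an explicit function of the form $t^{\nu}(1+t)^{-2\nu}\,\bigl\|\Phi_0+\Phi_1\,\nu\,\tfrac{1-t}{1+t}\bigr\|_2^2$ up to constants. Using orthogonality, I would check that $t=1$ is a global maximizer iff $\|\Phi_0\|_2^2\ge\nu\|\Phi_1\|_2^2$, by a one-variable calculus argument (log-derivative sign analysis).

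Sharpness follows from the equality cases, which lie in $\mathcal C^2_{K,a,b}$ but are not compactly supported. I would approximate them by $C_c^\infty(\mathbb R^N\setminus\{o\})$ cutoffs $u_k\to u$ in norm; both sides of \eqref{eq:unified-stability} are continuous, so the ratio tends to $1$. Finally, a nonzero compactly supported smooth $u$ cannot be of the form \eqref{eq:unified-equality}. Such a function vanishes near $r=\infty$ and near $o$ on a set of positive measure, while $g_\lambda[\Phi_0+\Phi_1(s-\nu)]$ vanishes for large $r$ on a positive-$\mu_K$ set of $\sigma$ only if $\Phi_0=\Phi_1=0$ there.
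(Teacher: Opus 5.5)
Your proposal is correct. Its core matches the paper's proof:
\begin{itemize}
\item the remainder identity $2\lambda\,\delta_{K,a,b}(u)=\int_{\mathbb R^N}|\partial_r w|^2g_\lambda^2r^{-2b}dx$;
\item the substitution $s=\frac{2\lambda}{|b+1-a|}r^{b+1-a}$;
\item the Laguerre spectral gap $P_{\nu,2}=1$, with equality exactly on $\operatorname{span}\{1,s-\nu\}$;
\item the Gamma-moment computation turning $D_b(u)=\lambda^2I_a(u)$ into $\langle\Phi_0,\Phi_1\rangle=0$;
\item the maximization of $(1-z^2)^\nu(\|\Phi_0\|_2^2+\nu^2z^2\|\Phi_1\|_2^2)$ over $z=\frac{\lambda-\eta}{\lambda+\eta}$, which is your $t^\nu(1+t)^{-2\nu}\|\Phi_0+\nu\frac{1-t}{1+t}\Phi_1\|_2^2$ up to a constant.
\end{itemize}

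You depart from the paper in two places.

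First, the paper completes the square only when $\gamma=0$ (region $\mathcal A$). It reaches $\mathcal B$ through the isometry $Tv=r^{2b+2-N}v$ of Lemma~\ref{sharp:conjugation}. You instead complete the square with the full logarithmic derivative $g_\lambda'/g_\lambda$, and this works in both regions. The Hardy term $\int u^2r^{-2b-2}dx$ drops out because $\gamma(\gamma+N-2b-2)=0$. The root selected by $\operatorname{sgn}(b+1-a)$ makes the coefficient of $\lambda I_{a,b}$ equal to $|N-2b-2|+|b+1-a|=\nu|b+1-a|$; I checked both facts.

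Second, for $p=2$ you take the fibrewise Gamma mean $\bar w$ as the angular coefficient. It is measurable and lies in $L^2(\partial K,d\mu_K)$ by Jensen. So the measurable-selection Lemma~\ref{l5} is unnecessary; the paper needs it because the same scheme serves $p\ne2$.

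What each route buys: the conjugation gives the paper economy, since the embedding, Lemma~\ref{sharp:density} and Proposition~\ref{prop:nearest-scales} are proved once with $\gamma=0$. Your single identity explains where $\gamma$ comes from. Taking $\lambda=1$, it also yields the embedding $\nu|b+1-a|I_{a,b}\le D_b+I_a$ in both regions. The price is that you must redo those computations with the weight $r^{2\gamma}$ present. They go through, since every Gamma integral again produces $s^{\nu-1}$. The density extension of your identity in $\mathcal B$ is also fine: for fixed $\lambda$ the identity itself bounds $\int|\partial_ru-\frac{g_\lambda'}{g_\lambda}u|^2r^{-2b}dx$ by $D_b(u)+\lambda^2I_a(u)$. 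So the extra term $\gamma u/r$ causes no trouble.

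Three steps you only assert carry real work in the paper:
\begin{itemize}
\item The Poincar\'e inequality and its equality case are needed on all of $\mathcal D_\nu$, not just on polynomials. The paper checks that the boundary terms $s^\nu e^{-s}hP_n'$ vanish for $h\in\mathcal D_\nu$, and uses completeness of $\{P_n'\}$ in $L^2(s\,d\mu_\nu)$.
\item Membership of the two-mode functions in $\mathcal C^2_{K,a,b}$ requires a logarithmic cutoff near $s=0$ when $\nu=1$ (i.e.\ $N=2b+2$). An ordinary cutoff leaves an energy error of order one there.
\item Sharpness needs an equality case at positive distance: $\Phi_1\ne0$ with $\langle\Phi_0,\Phi_1\rangle=0$ and $\|\Phi_0\|_2^2\ge\nu\|\Phi_1\|_2^2$. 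One exists, e.g.\ $\Phi_0=1$ and $\Phi_1=\varepsilon\Psi$ with $\int\Psi\,d\mu_K=0$, because $\mu_K$ is not a point mass.
\end{itemize}
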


For the functions in \eqref{eq:unified-equality}, the orthogonality condition
in \eqref{eq:unified-conditions} is equivalent to the balance relation
$D_b(u)=\lambda^2 I_a(u)$, while the norm condition ensures that
$\lambda$ is a global nearest scale.
More precisely, Proposition~\ref{prop:nearest-scales} shows that, within
the class of linear combinations of the first two radial Laguerre modes with orthogonal angular coefficients,
the nearest scale is unique when $\|\Phi_0\|_2^2\ge\nu\|\Phi_1\|_2^2$,
whereas there are exactly two nearest scales when
$\|\Phi_0\|_2^2<\nu\|\Phi_1\|_2^2$.
Therefore, the equality cases in Theorem~\ref{thm:unified}  include
functions with positive distance from $\mathcal M_{K,a,b}$ whenever
$\Phi_1\ne0$ and \eqref{eq:unified-conditions} holds.

The Heisenberg uncertainty principle is a special case
of the CKN inequalities.
Sharp uncertainty principles have been investigated
using factorization methods in \cite{CFLso,KLLSW25}.
Their quantitative stability has also attracted attention,
see \cite{Fathi,MV} for the classical Heisenberg inequality,
and \cite{DLLH} for more general cases.

In our anisotropic radial setting, taking $(a,b)=(-1,0)$ gives
$b+1-a=2,$ $N-2b-2=N-2\ge0,$ $\gamma=0,$ $\nu=\frac N2.$
Let
\[
\mathcal H_K=\mathcal C^2_{K,-1,0},\quad
g_\lambda(r)=e^{-\frac{\lambda r^2}{2}},\quad
\mathcal M_{K,H}=\mathcal M_{K,-1,0},\quad
\delta_{K,H}(u)=\delta_{K,-1,0}(u).
\]
Applying Theorem \ref{thm:unified}, we show
scale-invariant $L^2$ Heisenberg stability in Theorem \ref{thm:H-L2}:
For every nonzero $u\in\mathcal H_K$,
\begin{equation*}
\delta_{K,H}(u)
\geq 2 \inf_{v\in\mathcal M_{K,H}}\|u-v\|^2_{L^2(\mathbb R^N)}.
\end{equation*}
Theorem \ref{thm:H-L2} also establishes a scale-noninvariant $L^2$ Heisenberg stability, which is:
For every $u\in\mathcal H_K$,
\begin{equation*}
\delta_{K,H,non}(u)
\geq
4 \inf_{v\in\mathcal M_{K,H,non}}\|u-v\|^2_{L^2(\mathbb R^N)},
\end{equation*}
where $\delta_{K,H,non}(u)
=\|\mathcal R_K(u)\|^2_{L^2(\mathbb R^N)}
+\|ru\|^2_{L^2(\mathbb R^N)}
-N\|u\|^2_{L^2(\mathbb R^N)}$ and
$\mathcal M_{K,H,non}
=\big\{\Phi(\sigma)e^{-\frac{r^2}{2}}:\Phi\in L^2(\partial K,d\mu_K)\big\}.$
The constants 2 and 4 are sharp, including on $C_c^\infty(\RN\setminus\{o\})$.

For $1<p<\infty$, define the parameter region
$$
\mathcal A_p=
\Big\{(a,b)\in\mathbb R^2:b+1-a>0, b \leq \frac{N-p}{p}\Big\}
\cup
\Big\{(a,b)\in\mathbb R^2:b+1-a<0, b \ge\frac{N-p}{p}\Big\}.
$$
For $(a,b)\in\mathcal A_p$,
$$\nu_p=\frac{N-1-(p-1)a-b}{b+1-a}.$$
Then
$
\nu_p
=p-1+\frac{N-p(b+1)}{b+1-a}
\ge p-1>0.
$
In particular, $\nu_p\ge1$ when $p\ge2$.
If $(a,b)\in\mathcal A_2$, then
$\gamma=0$ in \eqref{eq:unified-parameters}
and the $L^2$-extremal family is
$$\mathcal M_{K,a,b}
=\Big\{\Phi(\sigma_K(x))\exp\Bigl(-\frac{\eta}{|b+1-a|}\|x\|_K^{b+1-a}\Bigr):
\eta>0,\ \Phi\in L^2(\partial K,d\mu_K)\Big\}.$$
Generally, the extremal family of the anisotropic $L^p$-CKN inequality
in $\mathcal A_p$ was obtained, in \cite{Wei2}, by
$$\mathcal M_{p,K,\mathcal A_p}
=\Big\{\Phi(\sigma_K(x))\exp\Big(-\frac{\eta}{|b+1-a|}\|x\|_K^{b+1-a}\Big):
\eta>0,\
\Phi\in L^p(\partial K,d\mu_K)\Big\}.$$
For $u\in\mathcal C^p_{K,a,b}$,
we define the deficit by
\begin{align*}
\delta_{p,K,\mathcal A_p}(u)
\!=\!\biggl(\int_{\mathbb R^N}\!\frac{\bigl|\mathcal R_K(u)\bigr|^p}{r^{bp}} dx\biggr)^{\frac{1}{p}}
\biggl(\int_{\mathbb R^N}\!\frac{|u|^p}{r^{ap}} dx\biggr)^{\frac{1}{p'}}\!-\!\frac{|N\!-\!1\!-\!(p\!-\!1)a\!-\!b|}{p}
\int_{\mathbb R^N}\!\frac{|u|^p}{r^{(p-1)a+b+1}} dx,
\end{align*}
where $p'=\frac{p}{p-1}$, and the weighted $L^p$ distance by
\begin{align*}
d_{p,K,\mathcal A_p}
\bigl(u,\mathcal M_{p,K,\mathcal A_p}\bigr)^p
=\inf_{v\in\mathcal M_{p,K,\mathcal A_p}}
\int_{\mathbb R^N}
\frac{|u-v|^p}
{\|x\|_K^{(p-1)a+b+1}}dx.
\end{align*}
The deficit and the weighted distance are finite
by Lemma~\ref{lApembedding}.

The following theorem establishes weighted $L^p$ stability
for the anisotropic radial CKN inequality
in the region $\mathcal A_p$. The constant is not claimed to be sharp when $p>2$.

\begin{theorem}\label{tA}
Let $2\le p<\infty$, $K\in\Ksc$ and
$(a,b)\in\mathcal A_p$. Then, for every
$u\in \mathcal C_{K,a,b}^p\setminus\{0\},$
one has
\begin{align*}
\delta_{p,K,\mathcal A_p}(u)
\geq \frac{\kappa_p p^{p-2}|b+1-a|}{P_{\nu_p,p}}d_{p,K,\mathcal A_p}
\bigl(u,\mathcal M_{p,K,\mathcal A_p}\bigr)^p,
\end{align*}
where $\kappa_p$ and $P_{\nu_p,p}$ are given in Lemmas~\ref{lUC} and
\ref{l2} respectively.
In particular, $\kappa_2=P_{\nu_2,2}=1$.
\end{theorem}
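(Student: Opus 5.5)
We plan to reduce Theorem~\ref{tA} to a one-dimensional weighted Poincar\'e inequality for a Gamma measure, applied fibrewise along each ray $r\mapsto r\sigma$. By density and the fibre description of $\mathcal C^p_{K,a,b}$ (\cite[Lemma~4.1]{Wei2}), it suffices to treat $u$ for which $r\mapsto u(r\sigma)$ is locally absolutely continuous for $\mu_K$-a.e.\ $\sigma$.

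\textbf{Change of variables.} Fix $\eta>0$ and set $s=\frac{p\eta r^{b+1-a}}{|b+1-a|}$. Write
\[
u(r\sigma)=w(s,\sigma)\exp\Bigl(-\frac{\eta r^{b+1-a}}{|b+1-a|}\Bigr).
\]
Then the weighted measure $r^{-(p-1)a-b-1}r^{N-1}dr$ becomes, up to an explicit constant, $s^{\nu_p-1}ds$. After multiplying by $e^{-s}$, the weight $|u|^p r^{-(p-1)a-b-1}$ turns into $|w|^p$ against the Gamma measure $d\gamma_{\nu_p}=s^{\nu_p-1}e^{-s}ds$. This is exactly where the region $\mathcal A_p$, with $\nu_p\ge p-1$, is used.

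\textbf{Choice of scale.} We take $\eta=\lambda_p(u)$, the scale balancing the two factors:
\[
\eta^{p}\int|u|^pr^{-ap}\,dx=\int|\mathcal R_K(u)|^pr^{-bp}\,dx .
\]
With this choice, the product $D^{1/p}I^{1/p'}$ equals the arithmetic-mean expression $\frac1p D\eta^{1-p}+\frac1{p'}I\eta$, suitably normalized. Hence the deficit becomes
\[
\frac{1}{p}\int\Bigl[\eta^{1-p}|\mathcal R_K u|^p r^{-bp}+(p-1)\eta|u|^pr^{-ap}-|N-1-(p-1)a-b|\,|u|^p r^{-(p-1)a-b-1}\Bigr]dx .
\]
The last term is obtained by integrating by parts in $r$ on each ray, using the identity $\frac{d}{dr}(\ldots)$ behind the sharp inequality of \cite{Wei2}. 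In the $s$ variable, $\mathcal R_K u$ is proportional to $(\partial_s w-w/p)\,e^{-s/p}$.

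\textbf{Uniform convexity and Poincar\'e.} We then apply the uniform convexity inequality of Lemma~\ref{lUC} in the form
\[
|A+B|^p\ge|A|^p+p|A|^{p-2}AB+\kappa_p|B|^p,
\]
with $A=-w/p$ and $B=\partial_s w$. The zeroth and first order terms should cancel exactly against the remaining terms after integrating $|w|^{p-2}w\,\partial_s w$ by parts with respect to $\gamma_{\nu_p}$. This leaves, for each $\sigma$,
\[
\delta_{p,K,\mathcal A_p}(u)\ge c\,\kappa_p\int_{\partial K}\int_0^\infty|\partial_s w|^p\,d\gamma_{\nu_p}\,d\mu_K .
\]
Next we invoke the weighted $L^p$-Poincar\'e inequality for Gamma measures of Lemma~\ref{l2}:
\[
\int|w-\Phi(\sigma)|^p\,d\gamma_{\nu_p}\le P_{\nu_p,p}\int|\partial_s w|^p\,d\gamma_{\nu_p},
\]
with $\Phi(\sigma)$ a fibrewise constant, such as the mean or median. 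Undoing the substitution gives
\[
\int\frac{|u-\Phi(\sigma)e^{-\eta r^{b+1-a}/|b+1-a|}|^p}{r^{(p-1)a+b+1}}\,dx
\]
on the left. The candidate $\Phi(\sigma)e^{-\eta r^{b+1-a}/|b+1-a|}$ lies in $\mathcal M_{p,K,\mathcal A_p}$, because $\Phi\in L^p(\partial K)$ by Jensen. Taking the infimum over $\mathcal M_{p,K,\mathcal A_p}$ then yields the stated estimate.

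\textbf{Main obstacle.} The hardest step is the bookkeeping of the constants through the substitution so that the factor comes out exactly $p^{p-2}|b+1-a|$. We also need the boundary terms at $s=0$ and $s=\infty$ in the integration by parts to vanish for general elements of the completion. For the latter we would first prove the estimate on $C_c^\infty(\mathbb R^N\setminus\{o\})$ and then pass to the limit. This uses the continuity of both sides in the $\mathcal C^p_{K,a,b}$ norm, via Lemma~\ref{lApembedding}, together with lower semicontinuity of the distance. For $p=2$, where $\kappa_2=P_{\nu_2,2}=1$, the argument should reduce to the Laguerre spectral-gap computation underlying Theorem~\ref{thm:unified}, which serves as a consistency check on the constants.
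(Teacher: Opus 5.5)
You follow the paper's route. You balance at $\lambda=\lambda_p(u)$, rewrite the deficit through the radial integration-by-parts identity, and use $R_p(s,t)\ge\kappa_p|t-s|^p$; your inequality with $A=-w/p$, $B=\partial_sw$ is, up to a positive factor, the paper's estimate for $R_p\bigl(-\operatorname{sgn}(b+1-a)\lambda^{1/p}ur^{b-a},\lambda^{-(p-1)/p}\mathcal R_K(u)\bigr)$. You then substitute $s=\frac{p\lambda}{|b+1-a|}r^{b+1-a}$, apply Lemma~\ref{l2} ray by ray, pass to the infimum over $\mathcal M_{p,K,\mathcal A_p}$, and extend from $C_c^\infty(\RN\setminus\{o\})$ by Lemma~\ref{lApembedding}.

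However, the central step fails as written, because a factor $s$ is missing. With $\theta=\frac{p\lambda}{|b+1-a|}$, the substitution gives
\[
\lambda^{1-p}\int_0^\infty|\partial_ru|^pr^{N-1-bp}dr=\frac{p^{p-1}}{\theta^{\nu_p}}\int_0^\infty s\Bigl|\partial_sw-\frac wp\Bigr|^pd\gamma_{\nu_p},\qquad
\lambda\int_0^\infty|u|^pr^{N-1-ap}dr=\frac{1}{p\,\theta^{\nu_p}}\int_0^\infty s|w|^pd\gamma_{\nu_p}.
\]
Only the third term comes without $s$: $\int_0^\infty|u|^pr^{N-(p-1)a-b-2}dr=\frac{1}{|b+1-a|\,\theta^{\nu_p}}\int_0^\infty|w|^pd\gamma_{\nu_p}$. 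The cancellation you anticipate works precisely because of these weights. The identity $\int s\,\partial_s(|w|^p)\,d\gamma_{\nu_p}=\int s|w|^pd\gamma_{\nu_p}-\nu_p\int|w|^pd\gamma_{\nu_p}$ removes the $\int s|w|^p$ terms and the term $|N-1-(p-1)a-b|\,I_{p,a,b}$. Here $|N-1-(p-1)a-b|=\nu_p|b+1-a|$ is where $\mathcal A_p$ enters. What remains is
\[
\delta_{p,K,\mathcal A_p}(u)\ge\frac{\kappa_pp^{p-2}}{\theta^{\nu_p}}\int_{\partial K}\int_0^\infty s|\partial_sw|^p\,d\gamma_{\nu_p}\,d\mu_K .
\]
Without the factor $s$, integrating $\partial_s(|w|^p)$ by parts against $s^{\nu_p-1}e^{-s}$ produces an $s^{\nu_p-2}$ term that cancels nothing. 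Your stated lower bound by $\int|\partial_sw|^pd\gamma_{\nu_p}$ therefore does not follow; the two energies are not comparable, neither near $s=0$ nor near $s=\infty$.

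Correspondingly, Lemma~\ref{l2} is the inequality with $s|h'|^p$ on the right-hand side, not the unweighted one you quote. The weight $s$ is exactly what makes the energy and Lemma~\ref{l2} fit. Then, with $F_\lambda(r)=e^{-\lambda r^{b+1-a}/|b+1-a|}$, the identity $\theta^{-\nu_p}\int|w-q|^pd\gamma_{\nu_p}=|b+1-a|\int_0^\infty|u-qF_\lambda|^pr^{N-(p-1)a-b-2}dr$ gives the constant $\kappa_pp^{p-2}|b+1-a|/P_{\nu_p,p}$ directly.

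A smaller point concerns your choice of $\Phi(\sigma)$. For $p>2$, the mean (or median) of $w(\cdot,\sigma)$ is not the minimizing constant. Lemma~\ref{l2} controls $\inf_c$ (its proof actually uses $c=h(1)$), so taking the mean only yields the estimate with an extra factor, up to $2^p$. Keep the fibrewise infimum instead, and convert it into the infimum over $\Phi\in L^p(\partial K,d\mu_K)$ with Lemma~\ref{l5}. That lemma also handles measurability and integrability of the selected coefficients.
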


To describe the $L^p$ Heisenberg estimates, let $2\le p<\infty$, $p'=\frac{p}{p-1}$ and
$\nu_H=\frac{N}{p'}$. We write
$\mathcal H_K^p=\mathcal C^p_{K,-\frac{1}{p-1},0}.$
The space $\mathcal H_K^p$ embeds continuously into
$L^p(\RN)$, as shown in the proof of
Theorem~\ref{tHcompletion}.
For $u\in\mathcal H_K^p$, the scale-noninvariant
and scale-invariant deficits are defined by
\begin{gather*}
\delta_{p,K,H,non}(u)
=\int_{\RN}\bigl|\mathcal R_K(u)\bigr|^pdx
+(p-1)\int_{\RN}r^{p'}|u|^pdx
-N\int_{\RN}|u|^pdx,\\
\delta_{p,K,H}(u)
=\biggl(\int_{\RN}\bigl|\mathcal R_K(u)\bigr|^pdx\biggr)^{\frac{1}{p}}
\biggl(\int_{\RN}r^{p'}|u|^pdx\biggr)^{\frac{1}{p'}}
-\frac{N}{p}\int_{\RN}|u|^pdx.
\end{gather*}
For the stability estimates, we introduce the following comparison families
\begin{gather*}
\mathcal M_{p,K,H,non}=\Big\{\Phi(\sigma)e^{-\frac{r^{p'}}{p'}}:\Phi\in L^p(\partial K,d\mu_K)\Big\},\\
\mathcal M_{p,K,H}=\Big\{\Phi(\sigma)e^{-\frac{\eta r^{p'}}{p'}}:\eta>0,\ \Phi\in L^p(\partial K,d\mu_K)\Big\},
\end{gather*}
When $p>N$, the nonzero elements of these families do not belong to $\mathcal H_K^p$. They nevertheless belong to $L^p(\mathbb R^N)$
and serve as comparison functions in the stability below, see Remark~\ref{rem:trace}.

The following theorem gives the scale-invariant and scale-noninvariant $L^p$ Heisenberg stability for the full range $2\leq p<\infty$.

\begin{theorem}\label{tHcompletion}
Let $2\le p<\infty$ and $K\in\Ksc$.
\begin{enumerate}[label=\textup{(\roman*)}]
\item
For every $u\in\mathcal H_K^p$,
\begin{align*}
\delta_{p,K,H,non}(u)
\geq
\frac{\kappa_p p'p^{p-1}}{P_{\nu_H,p}}
\inf_{v\in\mathcal M_{p,K,H,non}}\|u-v\|^p_{L^p(\mathbb R^N)}.
\end{align*}
\item
For every $u\in\mathcal H_K^p\setminus\{0\},$
\begin{align*}
\delta_{p,K,H}(u)
\geq
\frac{\kappa_p p'p^{p-2}}{P_{\nu_H,p}}
\inf_{v\in\mathcal M_{p,K,H}}\|u-v\|^p_{L^p(\mathbb R^N)}.
\end{align*}
\end{enumerate}
\end{theorem}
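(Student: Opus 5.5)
The plan is to prove part (i) by a one-dimensional ground-state substitution along each ray, combined with the uniform convexity inequality of Lemma~\ref{lUC} and the weighted Poincar\'e inequality for Gamma measures of Lemma~\ref{l2}. Part (ii) would then follow from (i) by optimizing over dilations.

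\textbf{Part (i).} First reduce to $u\in C_c^\infty(\mathbb R^N\setminus\{o\})$. Both sides of the inequality are continuous in $\mathcal H_K^p$ once the embedding $\mathcal H_K^p\hookrightarrow L^p(\mathbb R^N)$ is known. That embedding should itself come out of the same computation, because $N\|u\|_p^p$ is controlled by the positive terms of $\delta_{p,K,H,non}$. The distance term is $1$-Lipschitz in $u$ for the $L^p$ norm, so density gives the general case.

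For smooth compactly supported $u$, write $u(r\sigma)=e^{-r^{p'}/p'}w(r,\sigma)$. Then
\[
\mathcal R_K(u)=e^{-r^{p'}/p'}\bigl(\partial_r w-r^{p'-1}w\bigr),
\]
and $r^{p'-1}=r^{1/(p-1)}$. Set $B=-r^{p'-1}w\,e^{-r^{p'}/p'}$, so that $|B|^p=r^{p'}|u|^p$, and $A=e^{-r^{p'}/p'}\partial_rw$. Lemma~\ref{lUC} gives, in the form I expect it to take,
\[
|A+B|^p\ge|B|^p+p|B|^{p-2}BA+\kappa_p|A|^p .
\]
The cross term is
\[
-r^{(p'-1)(p-1)}(\partial_r|w|^p)e^{-pr^{p'}/p'}=-r\,\partial_r(|w|^p)e^{-pr^{p'}/p'} .
\]
Integrating it against $r^{N-1}dr$ by parts produces $N\int|u|^p-p\int r^{p'}|u|^p$. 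The boundary terms vanish because $w$ has compact support in $(0,\infty)$, so this step works for every $N$, including $p>N$. Collecting terms gives
\[
\delta_{p,K,H,non}(u)\ge\kappa_p\int_{\partial K}\int_0^\infty|\partial_rw|^pe^{-pr^{p'}/p'}r^{N-1}\,dr\,d\mu_K .
\]

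Next change variables on each ray to $s=\frac{p}{p'}r^{p'}=(p-1)r^{p'}$. The measure $e^{-pr^{p'}/p'}r^{N-1}dr$ becomes a constant multiple of the Gamma measure $s^{\nu_H-1}e^{-s}ds$, with $\nu_H=N/p'$. Meanwhile $|\partial_rw|^p$ becomes $|\partial_sw|^p$ times a power of $s$ that Lemma~\ref{l2} should be designed to absorb. Apply Lemma~\ref{l2} ray by ray, with $\Phi(\sigma)$ the optimal constant on the ray $\sigma$, i.e.\ the $L^p(\Gamma)$ projection. This bounds the right-hand side below by
\[
\frac{p'p^{p-1}}{P_{\nu_H,p}}\int_{\partial K}\int_0^\infty\bigl|w-\Phi(\sigma)\bigr|^pe^{-pr^{p'}/p'}r^{N-1}\,dr\,d\mu_K ,
\]
and this integral equals $\|u-\Phi(\sigma)e^{-r^{p'}/p'}\|_{L^p}^p$. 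One must also check that $\Phi$ is measurable and lies in $L^p(\partial K)$. This follows from $|\Phi(\sigma)|^p\lesssim\int|w|^pd\Gamma$ on each ray, so $\Phi\in L^p(\partial K,d\mu_K)$.

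\textbf{Part (ii).} Apply (i) to $u_\lambda(x)=u(\lambda x)$ and undo the scaling. Denote $X=\int|\mathcal R_K(u)|^p$, $Y=\int r^{p'}|u|^p$ and $Z=\int|u|^p$. One obtains
\[
\lambda^pX+(p-1)\lambda^{-p'}Y-NZ\ge C\lambda^{0}\inf_{\eta=\lambda^{p'}}\|u-v\|_p^p ,
\]
with all terms normalized by $\lambda^{N}$. Choose $\lambda$ to minimize $\lambda^pX+(p-1)\lambda^{-p'}Y$. By weighted AM--GM the minimum value is $pX^{1/p}Y^{1/p'}$, which is $p\bigl(\delta_{p,K,H}+\tfrac NpZ\bigr)$. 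Dividing by $p$ turns the constant $p^{p-1}$ into $p^{p-2}$, as stated.

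\textbf{Main obstacle.} The delicate step is the exact bookkeeping in the change of variables $s=(p-1)r^{p'}$. The powers of $r$ arising from $\partial_r=\frac{ds}{dr}\partial_s$ must match the weighted form of Lemma~\ref{l2}, which is what produces the factors $p'p^{p-1}$ and the exponent $\nu_H$. A second concern is the case $p>N$, where $e^{-r^{p'}/p'}\notin\mathcal H_K^p$. The density argument above avoids this, because the comparison functions only need to lie in $L^p$ (cf.\ Remark~\ref{rem:trace}), and no trace at the origin is ever used.
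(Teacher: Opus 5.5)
Your proposal is correct and follows essentially the paper's route. The shared steps are: the uniform-convexity bound reducing $\delta_{p,K,H,non}$ to $\kappa_p\int|\mathcal R_K(u)+r^{1/(p-1)}u|^p$; the ground-state substitution with $s=(p-1)r^{p'}$ feeding into Lemma~\ref{l2} (this is Lemma~\ref{lH}); a ray-wise choice of the angular coefficient, for which the paper's Lemma~\ref{l5} supplies the measurability you only asserted; density via $\mathcal H_K^p\hookrightarrow L^p(\mathbb R^N)$; and a dilation for (ii).

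There are only two small slips. With $u_\lambda(x)=u(\lambda x)$ the comparison scale is $\eta=\lambda^{-p'}$, not $\lambda^{p'}$. Also, before minimizing in $\lambda$ you should note that $u\neq0$ forces both $\int|\mathcal R_K(u)|^p$ and $\int r^{p'}|u|^p$ to be positive; this follows, for instance, from (i) by letting $\lambda\to\infty$.
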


Note that in the case $p=2$, the above $L^p$ Heisenberg stability is sharp
as shown in Theorem \ref{thm:H-L2}.
As the cases $2\le p\le N$ are in the range $\mathcal A_p$,
Theorem~\ref{tHcompletion} \textup{(ii)} follows
from Theorem~\ref{tA} with $a=-\frac{1}{p-1}$ and $b=0$.
For $p>N$, the parameter pair $(a,b)$ in Theorem~\ref{tHcompletion} lies outside $A_p$.

The main tools in the proofs are radial remainder identities and weighted $L^p$-Poincar\'e inequalities for Gamma measures.
As a technical result, we use a weighted Hardy criterion \cite{KPS,Muckenhoupt} to
establish the latter inequalities on $C_c^\infty(0,\infty)$.
For $p=2$, we extend the inequality to the natural energy space
$\mathcal D_\nu=\big\{
h\in W_{\mathrm{loc}}^{1,2}(0,\infty):\int_0^\infty
\big(|h(s)|^2+s|h'(s)|^2\big)
s^{\nu-1}e^{-s}ds<\infty\big\},$ and characterize the sharp constant and all
equality cases using generalized Laguerre polynomials and
estimates of the boundary terms.

Section~\ref{s2} contains the geometric notation
and auxiliary inequalities.
Section~\ref{s2-1} establishes the weighted
$L^p$-Poincar\'e inequalities for Gamma measures.
Section~\ref{s-sharp} proves Theorem~\ref{thm:unified}
and establishes the sharp $L^2$ Heisenberg stability estimates
in Theorem~\ref{thm:H-L2}.
Section~\ref{s-lp} establishes the general $L^p$
stability estimates, first for
compactly supported smooth functions and then for the functions in the related completion spaces.

\section{Preliminaries}\label{s2}

We recall the geometric notation and polar integration formula,
and record a uniform convexity estimate and a lemma on
minimization over angular coefficients.

Let $N\ge2$ and let $\RN$ be the $N$-dimensional Euclidean space.
We write $o$ for the origin, $x\cdot y$ for the Euclidean inner
product, and $|x|=(x\cdot x)^{\frac12}$ for the Euclidean norm. We use the
standard notation $C_c^\infty(\Omega)$ and $L^p(\Omega)$ for smooth
compactly supported functions and Lebesgue spaces, respectively.

A convex body is a compact convex subset of $\RN$ with nonempty interior.
We denote by $\mathcal K_{(o)}$ the class of convex bodies containing the origin in their interiors. For a convex body $K$, its boundary is denoted by $\partial K$.

For $K\in\mathcal K_{(o)}$, its Minkowski functional is defined by
$$
\hK{x}
=
\inf\{\lambda>0:x\in \lambda K\},
\quad x\in\RN.
$$
By this definition, we have
$$
K=\{x\in\RN:\hK{x}\le 1\},
\quad
\partial K=\{x\in\RN:\hK{x}=1\}.
$$
Moreover, $\|\cdot\|_K$ is positively homogeneous, namely
$
\|tx\|_K=t\|x\|_K$ for $t>0$ and $x\in\RN.
$
If $K$ is origin-symmetric, then $\|\cdot\|_K$ is a norm. In general, if
$K$ is not origin-symmetric, $\|\cdot\|_K$ need not be even and hence need
not be a norm.

A convex body $K\in\mathcal K_{(o)}$ is said to be smooth if each boundary
point of $K$ has a unique outer unit normal vector. It is said to be strictly
convex if $\partial K$ does not contain any nontrivial line segment. We denote
by
$
\Ksc
$
the class of convex bodies in $\mathcal K_{(o)}$ which are smooth and strictly
convex.

For $K\in\mathcal K_{(o)}$ and $x\ne o$, let
$
r_K=\hK{x}$ and
$
\sigma_K(x)=\frac{x}{\hK{x}}.
$
Then $\sigma_K(x)\in\partial K$ and
$x=r_K\sigma_K(x).$
For simplicity, we omit the subscript $K$ and write
$x\in\RN\setminus\{o\}$ in the form
$
x=r\sigma$ for $r>0$ and $\sigma\in\partial K.$
Let $d\mu_K$ be the finite Borel measure on $\partial K$ induced by the
anisotropic polar formula
$$
\int_{\RN}u(x)dx
=
\int_0^\infty\int_{\partial K}
u(r\sigma)r^{N-1}d\mu_K(\sigma)dr,
$$
for every nonnegative measurable function $u$ on $\RN$. We denote by
$L^p(\partial K,d\mu_K)$ the corresponding $L^p$ space on $\partial K$.

For $u\in C^1(\mathbb R^N\setminus\{o\})$, the anisotropic radial
derivative with respect to $K$ is defined by
$$
\mathcal R_K(u)(x)
=
\nabla u(x)\cdot \sigma_K(x)
=
\nabla u(x)\cdot \frac{x}{\hK{x}},
\quad x\ne o.
$$
In the coordinates $x=r\sigma$, where $r=\hK{x}$ and $\sigma=\sigma_K(x)$,
one has
$\mathcal R_K(u)(x)=\partial_r u(r,\sigma).$

For $\tau\in\mathbb R\setminus\{0\}$, let
$$\operatorname{sgn}(\tau)=
\begin{cases}
1, & \tau>0,\\
-1, & \tau<0.
\end{cases}$$
For $1<p<\infty$ and $s,t\in\mathbb R$, the Bregman divergence of the convex function $t\mapsto |t|^p$ is defined by
$$
R_p(s,t)=|t|^p+(p-1)|s|^p-p|s|^{p-2}st,
$$
with the usual convention that $|s|^{p-2}s=0$ when $s=0$. In particular,
when $p=2$,
$$
R_2(s,t)=|t-s|^2.
$$

When $p>1$, we have $R_p(s,t)\geq0$ for all real $s,t$, with equality if and only if $s=t$.
We next give the standard uniform convexity estimate for the
function $t\mapsto |t|^p$ for $p\geq2$ (see, e.g., \cite[p 99]{Lindqvist}).

\begin{lemma}\label{lUC}
Let $2\le p<\infty$. Then there exists $\kappa_p>0$ such that
$$
R_p(s,t)\ge\kappa_p|t-s|^p
$$
for all $s,t\in\mathbb R$.
In particular, when $p=2$, one has
$R_2(s,t)=|t-s|^2$, and we take $\kappa_2=1$.
\end{lemma}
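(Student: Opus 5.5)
The plan is to reduce the inequality to a one-variable statement by homogeneity. If $s=0$, then $R_p(0,t)=|t|^p=|t-0|^p$, so any $\kappa_p\le1$ works. If $s\ne0$, both sides are $p$-homogeneous in $(s,t)$: indeed $R_p(\lambda s,\lambda t)=|\lambda|^pR_p(s,t)$ for $\lambda\ne0$, because $|\lambda s|^{p-2}\lambda s\,\lambda t=|\lambda|^p|s|^{p-2}st$. Dividing by $|s|^p$ and setting $x=t/s$, it suffices to show
\[
f(x):=\frac{|x|^p+(p-1)-px}{|x-1|^p}\ge\kappa_p>0\quad\text{for all }x\ne1 .
\]

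The first key step is to bound $f$ below on $\mathbb R\setminus\{1\}$ using continuity, positivity and the behaviour at the endpoints. The numerator is the Bregman divergence $g(x)=|x|^p-1-p(x-1)$ of the strictly convex function $|x|^p$ at the point $1$. It is therefore strictly positive for $x\ne1$, so $f$ is continuous and positive on $\mathbb R\setminus\{1\}$. As $|x|\to\infty$ we have $f(x)\to1$.

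The second key step, which I expect to be the only delicate point, is the behaviour near $x=1$. A Taylor expansion gives $g(x)=\tfrac{p(p-1)}{2}(x-1)^2+O(|x-1|^3)$. Since $p\ge2$, we have $|x-1|^p\le|x-1|^2$ near $1$, so $f(x)\ge c|x-1|^{2-p}\ge c>0$ there. This is bounded below, and in fact tends to $+\infty$ when $p>2$. This is exactly where the hypothesis $p\ge2$ is used: for $p<2$ the ratio would tend to $0$.

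Combining these, $f$ is positive and continuous on $\mathbb R\setminus\{1\}$, stays bounded below near $1$, and tends to $1$ at $\pm\infty$. Hence $\inf f=:\kappa_p>0$, attained or approached on a compact set away from the singular behaviour. Take $\kappa_p=\min\{1,\inf f\}$ to cover the case $s=0$. For $p=2$ we compute directly $R_2(s,t)=t^2+s^2-2st=(t-s)^2$, so $\kappa_2=1$ is admissible with equality.
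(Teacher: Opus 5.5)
Your proof is correct, but it is not the paper's route: the paper gives no proof of this lemma and simply cites the standard uniform convexity inequality for $t\mapsto|t|^p$ from Lindqvist's notes. That inequality is usually stated for vectors, $|b|^p\ge|a|^p+p|a|^{p-2}a\cdot(b-a)+\kappa_p|b-a|^p$, with an explicit constant such as $\kappa_p=(2^{p-1}-1)^{-1}$.

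You instead use the $p$-homogeneity of $R_p$ to reduce to the one-variable ratio $f(x)=\bigl(|x|^p+p-1-px\bigr)/|x-1|^p$, and then argue by continuity and compactness. Near $x=1$, the Taylor expansion together with $p\ge2$ gives $f\ge c$. For $|x|$ large, $f\ge\frac12$. On the remaining compact set, $f$ is continuous and positive.

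This argument is self-contained and shows exactly where $p\ge2$ enters. It also gives something the citation does not. Since $R_p(0,t)=|t|^p$ and $f\to1$ at infinity, your $\kappa_p=\inf f$ is the optimal constant for real $s,t$. For example, for $p=3$ one finds $\inf f=2-\sqrt2$, which is better than $1/3$.

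What you give up is an explicit numerical value and the vector-valued version. Neither is needed here: the paper only uses the existence of some $\kappa_p>0$ together with $\kappa_2=1$, and the latter is exactly your computation $R_2(s,t)=(t-s)^2$.

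The degenerate case $t=s$, where both sides vanish, is implicit in your reduction and deserves one explicit line.
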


We next give a minimization lemma, related to one dimensional
minimization along each anisotropic direction to the corresponding
global minimization.

\begin{lemma}\label{l5}
Let $(X,\mu)$ be a finite measure space, let $1<p<\infty$, and let
$v,w:(0,\infty)\to(0,\infty)$ be measurable functions satisfying
$$
0
<
\int_0^\infty v(r)^p w(r)dr
<
\infty.
$$
Suppose that $u:X\times(0,\infty)\to\mathbb R$ is measurable and
$$
\int_X\int_0^\infty
|u(\xi,r)|^p w(r)drd\mu(\xi)
<
\infty.
$$
Then
$$
\int_X
\inf_{c\in\mathbb R}
\int_0^\infty
|u(\xi,r)-cv(r)|^p w(r)drd\mu(\xi)
=
\inf_{\Phi\in L^p(X,d\mu)}
\int_X\int_0^\infty
|u(\xi,r)-\Phi(\xi)v(r)|^p
w(r)drd\mu(\xi).
$$
\end{lemma}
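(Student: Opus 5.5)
The identity will follow from two inequalities. Write
\[
F(\xi,c)=\int_0^\infty|u(\xi,r)-cv(r)|^pw(r)\,dr,\qquad m(\xi)=\inf_{c\in\mathbb R}F(\xi,c).
\]
By Fubini--Tonelli, $F(\xi,0)<\infty$ for $\mu$-a.e.\ $\xi$.

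The easy direction is ``$\le$''. For any $\Phi\in L^p(X,d\mu)$ we have $m(\xi)\le F(\xi,\Phi(\xi))$ pointwise. Integrating, and using Tonelli for the jointly measurable integrand $|u-\Phi v|^pw$, gives $\int_X m\,d\mu$ at most the right-hand side. Before this, I must check that $m$ is measurable. Since $c\mapsto F(\xi,c)$ is continuous wherever $F(\xi,0)<\infty$ (by dominated convergence, with $|u-cv|^p\le 2^{p-1}(|u|^p+|c|^pv^p)$), the infimum may be taken over $c\in\mathbb Q$. So $m$ is a countable infimum of measurable functions.

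For ``$\ge$'' I construct a measurable near-minimizer, or better an exact minimizer. Put $A=\int_0^\infty v^pw\,dr\in(0,\infty)$. For a.e.\ $\xi$, $c\mapsto F(\xi,c)$ is finite, continuous, strictly convex (as $p>1$ and $v>0$), and coercive. Coercivity comes from Minkowski's inequality:
\[
F(\xi,c)^{1/p}\ge |c|A^{1/p}-F(\xi,0)^{1/p}.
\]
Hence there is a unique minimizer $c^*(\xi)$, and it satisfies $|c^*(\xi)|\le 2A^{-1/p}F(\xi,0)^{1/p}$, because $F(\xi,c^*)\le F(\xi,0)$. Define $\Phi^*(\xi)=c^*(\xi)$, and set it to $0$ on the null set where $F(\xi,0)=\infty$. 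Then
\[
\int_X|\Phi^*|^p\,d\mu\le 2^pA^{-1}\int_X F(\xi,0)\,d\mu<\infty,
\]
so $\Phi^*\in L^p$. Moreover $\int_X F(\xi,\Phi^*(\xi))\,d\mu=\int_X m\,d\mu$, which gives ``$\ge$''.

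The only real obstacle is the measurability of $c^*$. I would handle it through the sublevel sets:
\[
\{c^*<t\}=\bigcup_{q\in\mathbb Q,\,q<t}\{F(\cdot,q)<\inf_{c'\in\mathbb Q,\,c'\ge t}F(\cdot,c')\}.
\]
This holds by strict convexity and continuity. Indeed, $c^*<t$ iff some value left of $t$ beats all values right of $t$, and since $F(\xi,\cdot)$ increases strictly on $[c^*,\infty)$ the inequality is strict and can be tested at a rational $q$. Each set in the union is measurable, since $F(\cdot,q)$ is measurable by Tonelli.

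If $X$ is only finite but $u$ is arbitrary, there is a fallback that avoids minimizers altogether. Use rational $\epsilon$-minimizers: let $\Phi_\epsilon(\xi)$ be the first $q_k$ in an enumeration of $\mathbb Q$ with $F(\xi,q_k)\le m(\xi)+\epsilon$. This function is measurable, and the same coercivity bound shows $\Phi_\epsilon\in L^p$ (here finiteness of $\mu$ is used). Letting $\epsilon\to0$ gives the result.
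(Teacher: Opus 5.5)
Your proof is correct. Your fallback paragraph is essentially the paper's argument: the paper enumerates $\mathbb Q=\{q_k\}$ and takes $\Phi_n(\xi)$ to be the first $q_k$ with $F(\xi,q_k)\le m(\xi)+\frac1n$. It gets measurability by writing $\Phi_n=\sum_k q_k\mathbf 1_{B_{k,n}}$ over disjointified sets, bounds $|\Phi_n|^pA\le 2^pF(\xi,0)+2^{p-1}/n$, and closes with an error term $\mu(X)/n\to0$.

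Your primary route is genuinely different. You use strict convexity and coercivity of $c\mapsto F(\xi,c)$, valid since $p>1$ and $v,w>0$, to produce an exact and unique minimizer $c^*(\xi)$ on $X_0=\{F(\cdot,0)<\infty\}$. Your sublevel-set formula for $\{c^*<t\}$ is right. When $c^*(\xi)<t$, monotonicity and continuity on $[c^*(\xi),\infty)$ give $\inf_{c'\in\mathbb Q,\,c'\ge t}F(\xi,c')=F(\xi,t)>F(\xi,c^*(\xi))$, so a rational $q<t$ near $c^*(\xi)$ works. The reverse inclusion follows from minimality of $c^*$.

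This buys two things the paper does not obtain. First, the infimum over $\Phi\in L^p(X,d\mu)$ is attained. Second, your bound $\int_X|\Phi^*|^p\,d\mu\le 2^pA^{-1}\int_XF(\xi,0)\,d\mu$ carries no $\epsilon$-error, so finiteness of $\mu$ is never used; $\sigma$-finiteness, needed for Tonelli, suffices.

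The paper's $\epsilon$-minimizer argument needs only continuity of $F(\xi,\cdot)$ and no uniqueness, so its measurability step is more elementary. It pays for this by using $\mu(X)<\infty$ twice: once in the $L^p$ bound on $\Phi_n$ and once to make the $\mu(X)/n$ error vanish.
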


\begin{proof}
By Fubini's theorem, there exists a measurable set
$X_0\subset X$ with $\mu(X\setminus X_0)=0$ such that
\[
\int_0^\infty |u(\xi,r)|^p w(r)dr<\infty
\quad\text{for every }\xi\in X_0.
\]
All pointwise constructions below are made on $X_0$,
and we set $\Phi_n=0$ on $X\setminus X_0$.

For almost every $\xi\in X_0$, the function
$$c\mapsto
\int_0^\infty
|u(\xi,r)-cv(r)|^p w(r)dr
$$
is continuous, and then measurable. Since $\mathbb Q$ is dense in $\mathbb R$, one has
$$
\inf_{c\in\mathbb R}
\int_0^\infty
|u(\xi,r)-cv(r)|^p w(r)dr
=\inf_{q\in\mathbb Q}\int_0^\infty|u(\xi,r)-qv(r)|^p w(r)dr.
$$
Therefore, the function on the right side is measurable with respect to $\xi$,
and hence so is the left side.

Let
$\mathbb Q=\{q_1,q_2,\ldots\}.$
For each $n\in\mathbb N$ and $\xi\in X_0$, we define $\Phi_n(\xi)$ to be
the first $q_k$ satisfying
\begin{align}\label{eq2.1}
\int_0^\infty
\bigl|u(\xi,r)-q_kv(r)\bigr|^p w(r)dr
\leq
\inf_{c\in\mathbb R}\int_0^\infty\bigl|u(\xi,r)-cv(r)\bigr|^p w(r)dr
+\frac1n.
\end{align}
For $n$ and $k$, the function $
\inf_{c\in\mathbb R}\int_0^\infty\bigl|u(\xi,r)-cv(r)\bigr|^p w(r)dr
-\int_0^\infty
\bigl|u(\xi,r)-q_kv(r)\bigr|^p w(r)dr+\frac1n$ is measurable. Hence the sets, defined by
$$E_{k,n}=\left\{\xi\in X_0:\int_0^\infty
\bigl|u(\xi,r)-q_kv(r)\bigr|^p w(r)dr
\leq
\inf_{c\in\mathbb R}\int_0^\infty\bigl|u(\xi,r)-cv(r)\bigr|^p w(r)dr
+\frac1n\right\},$$
are measurable.
Set $B_{1,n}=E_{1,n}$ and $B_{k,n}=E_{k,n}\backslash (\bigcup_{i=1}^{k-1}E_{i,n})$ for $k=2,3\cdots$.
These sets are measurable and pairwise disjoint.
Since $\Phi_n(\xi)$ is the first rational number
satisfying \eqref{eq2.1}, we have
\[
\Phi_n(\xi)=\sum_{k=1}^{\infty}q_k\mathbf 1_{B_{k,n}}(\xi).
\]
Hence $\Phi_n$ is measurable.

We claim that
$
\Phi_n\in L^p(X,d\mu).
$
By the convexity of $t\mapsto |t|^p$ with $p>1$, it follows that
$$
|\Phi_n(\xi)v(r)|^p
\le
2^{p-1}|u(\xi,r)-\Phi_n(\xi)v(r)|^p
+
2^{p-1}|u(\xi,r)|^p.
$$
This together with \eqref{eq2.1} implies that
\begin{align*}
&\bigl|\Phi_n(\xi)\bigr|^p
\int_0^\infty v(r)^p w(r)dr\\
&\le
2^{p-1}
\int_0^\infty
\bigl|u(\xi,r)-\Phi_n(\xi)v(r)\bigr|^p w(r)dr+
2^{p-1}
\int_0^\infty
\bigl|u(\xi,r)\bigr|^p w(r)dr\\
&\le
2^{p-1}
\inf_{c\in\mathbb R}
\int_0^\infty
\bigl|u(\xi,r)-cv(r)\bigr|^p w(r)dr+
\frac{2^{p-1}}{n}
+
2^{p-1}
\int_0^\infty
\bigl|u(\xi,r)\bigr|^p w(r)dr\\
&\le
2^p
\int_0^\infty
\bigl|u(\xi,r)\bigr|^p w(r)dr
+
\frac{2^{p-1}}{n}.
\end{align*}
Integrating over $X$, we obtain
$\Phi_n\in L^p(X,d\mu)$ by the assumptions.

Integrating \eqref{eq2.1} over $X$ and taking infimum over $\Phi$, we get
\begin{align*}
&\inf_{\Phi\in L^p(X,d\mu)}
\int_X\int_0^\infty
\bigl|u(\xi,r)-\Phi(\xi)v(r)\bigr|^p
w(r)drd\mu(\xi)\\
&\le
\int_X
\inf_{c\in\mathbb R}
\int_0^\infty
\bigl|u(\xi,r)-cv(r)\bigr|^p
w(r)drd\mu(\xi)
+
\frac{\mu(X)}{n}.
\end{align*}
Letting $n\to\infty$, one obtains one direction
of the desired equality.
Conversely, for $\Phi\in L^p(X,d\mu)$,
\begin{align*}
\inf_{c\in\mathbb R}
\int_0^\infty
\bigl|u(\xi,r)-cv(r)\bigr|^p w(r)dr
\le
\int_0^\infty
\bigl|u(\xi,r)-\Phi(\xi)v(r)\bigr|^p w(r)dr.
\end{align*}
Integrating over $X$ and taking the infimum over $\Phi$ gives the
reverse inequality.
\end{proof}

\section{Weighted Poincar\'e inequalities for Gamma measures}\label{s2-1}

We establish a weighted $L^p$-Poincar\'e inequality for the
Gamma measure $\mu_\nu$ for every $\nu>0$.
See also \cite{LLRG,LLRBeckner} for weighted Poincar\'e inequalities
and their stability estimates for Gaussian-type measures.
We begin with Muckenhoupt's weighted Hardy criterion
\cite[Theorem~1]{Muckenhoupt}.

\begin{lemma}\label{lem}
Let $1<p<\infty$, $0<L\le\infty$, and $U,V$ be positive
measurable functions on $(0,L)$. Then
\begin{align*}
\bigg(
\int_0^L
\bigg|
U(s)\int_0^s f(t)dt
\bigg|^pds
\bigg)^{\frac1p}
\le
C
\bigg(
\int_0^L
\bigl|V(s)f(s)\bigr|^pds
\bigg)^{\frac1p}
\end{align*}
holds for all nonnegative measurable functions $f$ if and only if
\begin{align*}
B_L
=
\sup_{0<\rho<L}
\bigg(
\int_\rho^L U(s)^pds
\bigg)^{\frac1p}
\bigg(
\int_0^\rho V(s)^{-p'}ds
\bigg)^{\frac1{p'}}
<
\infty,
\end{align*}
where
$
p'
=
\frac{p}{p-1}.
$
Moreover, if $C$ is the least constant for which the above inequality
holds, then
\begin{align*}
B_L
\le C
\le
p^{\frac1p}(p')^{\frac1{p'}}B_L.
\end{align*}
When $L=\infty$, the integrals with upper endpoint $L$ are understood
as integrals over $(0,\infty)$.
\end{lemma}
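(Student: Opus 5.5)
This lemma is Muckenhoupt's classical criterion \cite[Theorem~1]{Muckenhoupt}. I would prove the two inequalities $B_L\le C$ and $C\le p^{1/p}(p')^{1/p'}B_L$ separately, using the cumulative functions
\[
h(s)=\int_0^s V^{-p'}dt,\qquad F(s)=\int_s^L U^pdt .
\]

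\textbf{Necessity, $B_L\le C$.} Assume the inequality holds with constant $C$, and fix $\rho\in(0,L)$. The natural test function is $f=V^{-p'}\mathbf 1_{(0,\rho)}$. For $s\ge\rho$ we have $\int_0^s f=h(\rho)$. Hence the left side is at least $h(\rho)F(\rho)^{1/p}$. Since $p(1-p')=-p'$, the right side equals $C\,h(\rho)^{1/p}$. Dividing by $h(\rho)^{1/p}$ gives
\[
F(\rho)^{1/p}h(\rho)^{1/p'}\le C .
\]
The only delicate point is that $h(\rho)$ might be infinite or zero. To handle this I would use $f_n=\min(V^{-p'},n)\mathbf 1_{(1/n,\rho)}$ instead. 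This gives the same estimate with $h$ replaced by the finite quantity $h_n(\rho)=\int_{1/n}^{\rho}\min(V^{-p'},n)V^{p'}\,V^{-p'}$-type truncations. Letting $n\to\infty$ by monotone convergence recovers the bound, and shows $h(\rho)=\infty$ is impossible unless $F(\rho)=0$. Taking the supremum over $\rho$ gives $B_L\le C$.

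\textbf{Sufficiency, via Hölder with a power of $h$.} Assume $B_L<\infty$, so that $F(s)\le B_L^p\,h(s)^{1-p}$ for all $s$. Write
\[
f=\bigl(fVh^{\alpha}\bigr)\bigl(V^{-1}h^{-\alpha}\bigr),\qquad \alpha=\frac1{pp'} .
\]
Applying Hölder on $(0,s)$ gives
\[
\Bigl(\int_0^s f\Bigr)^p\le \int_0^s f^pV^ph^{1/p'}\,dt\cdot\Bigl(\int_0^s h'h^{-1/p}\,dt\Bigr)^{p-1}.
\]
The last factor is exactly $\bigl(p'h(s)^{1/p'}\bigr)^{p-1}$. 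Multiplying by $U(s)^p$, integrating in $s$, and using Fubini, the left side of the lemma to the power $p$ is bounded by
\[
(p')^{p-1}\int_0^L f(t)^pV(t)^p\,h(t)^{1/p'}\Bigl(\int_t^L U^p h^{\gamma}\,ds\Bigr)dt,\qquad \gamma=\frac{p-1}{p'} .
\]

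\textbf{Key step: bounding the inner integral.} This is the only place the condition $B_L<\infty$ enters. Integrating by parts against $-dF$ gives
\[
\int_t^L U^ph^\gamma=F(t)h(t)^\gamma+\gamma\int_t^L F h^{\gamma-1}h'\,ds .
\]
I would justify dropping the boundary term at $L$ by first working on $(t,L')$ with $L'<L$ and letting $L'\uparrow L$. Inserting $F\le B_L^ph^{1-p}$, and using $\gamma<p-1$ so that $h^{\gamma-p}h'$ is integrable near infinity, I get
\[
\int_t^L U^ph^\gamma\le B_L^p\Bigl(1+\frac{\gamma}{p-1-\gamma}\Bigr)h(t)^{\gamma+1-p}=pB_L^p\,h(t)^{-(p-1)/p}.
\]
This factor exactly cancels the weight $h(t)^{1/p'}$ coming from the Hölder step. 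The result is
\[
\int_0^L\Bigl|U\int_0^sf\Bigr|^p\le p\,(p')^{p-1}B_L^p\int_0^L|Vf|^p ,
\]
which is the claimed bound $C\le p^{1/p}(p')^{1/p'}B_L$.

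I expect the main obstacle to be the bookkeeping of exponents and the justification of the integration by parts. Degenerate cases, namely $h$ vanishing near $0$ and the case $L=\infty$, would be handled by truncation and monotone convergence.
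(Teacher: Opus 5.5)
Your proof is correct. The paper does not prove this lemma; it quotes it as \cite[Theorem~1]{Muckenhoupt}, and what you wrote is essentially the classical argument behind that theorem. Truncated versions of $V^{-p'}\mathbf 1_{(0,\rho)}$ give $B_L\le C$. For your $f_n$ this is an inequality rather than the same identity: $\int_0^L|Vf_n|^p\le\int_{1/n}^{\rho}\min(V^{-p'},n)\,dt$, because $V^p\min(V^{-p'},n)^p\le\min(V^{-p'},n)$, and that is all you need. The upper bound then follows from H\"older with the factor $h^{1/(pp')}$, Tonelli, and the bound $\int_t^L U^p h^{(p-1)/p'}\,ds\le pB_L^p\,h(t)^{-1/p'}$ obtained by integration by parts. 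That last step is valid because $B_L<\infty$ and $U>0$ force $h<\infty$ on $(0,L)$, and because the boundary term dropped at $L'<L$ is nonnegative.

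Writing the argument out makes the lemma self-contained, including the finite-$L$ case used in Lemma~\ref{l1}. It also exhibits directly the constant $p(p')^{p-1}B_L^p$ for $p$-th powers that Lemma~\ref{l1} quotes.
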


The following two variants of Lemma \ref{lem} will be used on
$(s_0,\infty)$ and $(0,s_0)$. The first follows by translation and the second by reversing the interval.

\begin{lemma}\label{l1}
Let $1<p<\infty$, $s_0>0$, and $U,V$ be positive measurable
functions on $(s_0,\infty)$. If
$$
B_+
=
\sup_{\rho>s_0}
\bigg(
\int_\rho^\infty U(s)^p ds
\bigg)^{\frac1p}
\bigg(
\int_{s_0}^{\rho} V(s)^{-p'} ds
\bigg)^{\frac1{p'}}
<\infty,
$$
where $p'=\frac{p}{p-1}$, then there exists $C_+>0$ such that
$$
\int_{s_0}^\infty
\bigg|
U(s)\int_{s_0}^s f(t)dt
\bigg|^p ds
\le
C_+
\int_{s_0}^\infty |V(s)f(s)|^p ds
$$
for all nonnegative measurable functions $f$.

Similarly, if $U,V$ are positive measurable functions on $(0,s_0)$ and
$$
B_-
=
\sup_{0<\rho<s_0}
\bigg(
\int_0^\rho U(s)^p ds
\bigg)^{\frac1p}
\bigg(
\int_\rho^{s_0} V(s)^{-p'} ds
\bigg)^{\frac1{p'}}
<\infty,
$$
then there exists $C_->0$ such that
$$
\int_0^{s_0}
\bigg|
U(s)\int_s^{s_0} f(t)dt
\bigg|^p ds
\le
C_-
\int_0^{s_0} |V(s)f(s)|^p ds
$$
for all nonnegative measurable functions $f$.

Moreover, the constants may be chosen so that
$$
C_+
\le
p(p')^{p-1}B_+^p
\quad\text{and}\quad
C_-
\le
p(p')^{p-1}B_-^p.
$$
\end{lemma}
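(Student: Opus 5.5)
The first statement follows from Lemma~\ref{lem} applied on a shifted interval. The second follows by a reflection that turns the $(0,s_0)$ problem into one of the form treated there. The constant bound comes from raising the two-sided estimate $C\le p^{1/p}(p')^{1/p'}B_L$ to the $p$-th power:
\[
\bigl(p^{1/p}(p')^{1/p'}B\bigr)^p=p\,(p')^{p/p'}B^p=p\,(p')^{p-1}B^p .
\]

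\emph{The $(s_0,\infty)$ case.} Set $\tilde U(s)=U(s+s_0)$, $\tilde V(s)=V(s+s_0)$ and $\tilde f(t)=f(t+s_0)$ for $s,t\in(0,\infty)$, and take $L=\infty$. The change of variables $s\mapsto s+s_0$ gives $\int_{s_0}^{s}f(t)\,dt=\int_0^{s-s_0}\tilde f(t)\,dt$. It therefore turns the left side of the desired inequality into $\int_0^\infty|\tilde U(s)\int_0^s\tilde f|^p\,ds$, and the right side into $\int_0^\infty|\tilde V\tilde f|^p\,ds$. The same substitution, with $\rho$ replaced by $\rho-s_0$, maps the Muckenhoupt quantity $B_L$ for $(\tilde U,\tilde V)$ exactly onto $B_+$. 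By Lemma~\ref{lem}, the least constant $C$ is finite and satisfies $C\le p^{1/p}(p')^{1/p'}B_+$. Taking $C_+=C^p$ and using the computation above gives the stated bound on $C_+$.

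\emph{The $(0,s_0)$ case.} Here we reverse the interval with $s\mapsto s_0-s$. Set $L=s_0$, $\tilde U(s)=U(s_0-s)$, $\tilde V(s)=V(s_0-s)$ and $\tilde f(t)=f(s_0-t)$ on $(0,s_0)$. Then
\[
\int_{s_0-s}^{s_0}f(t)\,dt=\int_0^{s}\tilde f(t)\,dt ,
\]
so the left side becomes $\int_0^{s_0}|\tilde U(s)\int_0^s\tilde f|^p\,ds$ and the right side becomes $\int_0^{s_0}|\tilde V\tilde f|^p\,ds$. For $\rho\in(0,s_0)$, writing $\rho'=s_0-\rho$, we have
\[
\int_{\rho}^{s_0}\tilde U^p=\int_0^{\rho'}U^p,\qquad \int_0^{\rho}\tilde V^{-p'}=\int_{\rho'}^{s_0}V^{-p'} .
\]
Hence $B_L$ for $(\tilde U,\tilde V)$ equals $B_-$, and Lemma~\ref{lem} with $L=s_0$ gives the claim with $C_-\le p(p')^{p-1}B_-^p$.

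\emph{Checks.} Both substitutions are measure-preserving affine bijections, so positivity and measurability of $\tilde U,\tilde V,\tilde f$ are preserved and nonnegativity of $\tilde f$ is automatic. Nothing here is a real obstacle. The only point needing care is the bookkeeping of the reversed variables: one must check that the supremum over $\rho$ runs over the full interval $(0,s_0)$ in both formulations, and that the tail and head integrals are swapped correctly.
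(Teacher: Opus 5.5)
Your proposal is correct and follows the paper's proof essentially verbatim. For the first assertion you translate by $s_0$ and apply Lemma~\ref{lem} with $L=\infty$; for the second you reflect via $s\mapsto s_0-s$ and apply it with $L=s_0$, checking in each case that the Muckenhoupt quantity becomes $B_\pm$. Your explicit computation $\bigl(p^{1/p}(p')^{1/p'}B\bigr)^p=p(p')^{p-1}B^p$ spells out the constant bound that the paper states directly.
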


\begin{proof}
For the first assertion, set
$
\theta=s-s_0
$
for $s\in(s_0,\infty)$, and let
$
\widetilde f(\cdot)=f(s_0+\cdot),\
\widetilde U(\cdot)=U(s_0+\cdot),\
\widetilde V(\cdot)=V(s_0+\cdot).
$
Then for $\theta>0$
$$
\int_{s_0}^{s_0+\theta}f(t)dt
=
\int_0^\theta\widetilde f(\tau)d\tau.
$$
Moreover,
$$
\int_{s_0}^\infty
\bigg|
U(s)\int_{s_0}^s f(t)dt
\bigg|^pds=
\int_0^\infty
\bigg|
\widetilde U(\theta)
\int_{s_0}^{s_0+\theta} f(t)dt
\bigg|^pd\theta
=
\int_0^\infty
\bigg|
\widetilde U(\theta)
\int_0^\theta \widetilde f(\tau)d\tau
\bigg|^pd\theta,
$$
$$
\int_{s_0}^\infty |V(s)f(s)|^pds
=
\int_0^\infty |\widetilde V(\theta)\widetilde f(\theta)|^pd\theta.
$$
It follows that
\begin{align*}
\sup_{\rho>0}\!
\bigg(\!
\int_\rho^\infty \widetilde U(\xi)^p d\xi
\!\bigg)^{\frac1p}
\bigg(\!
\int_0^\rho \widetilde V(\xi)^{-p'} d\xi
\!\bigg)^{\frac1{p'}}
&\!\!=\!
\sup_{\rho>0}
\bigg(
\int_\rho^\infty U(s_0+\xi)^p d\xi
\bigg)^{\frac1p}
\bigg(
\int_0^\rho V(s_0+\xi)^{-p'} d\xi
\bigg)^{\frac1{p'}}
\\
&\!\!=\!
\sup_{\rho>s_0}
\bigg(
\int_\rho^\infty U(s)^p ds
\bigg)^{\frac1p}
\bigg(
\int_{s_0}^\rho V(s)^{-p'} ds
\bigg)^{\frac1{p'}}
\\
&\!\!=\!B_+.
\end{align*}
Hence, the first assertion follows from Lemma \ref{lem}, with
$
C_+
\le
p(p')^{p-1}B_+^p.
$

For the second assertion, set
$
\theta=s_0-s
$
for $s\in(0,s_0)$, and let
$
\widetilde f(\cdot)=f(s_0-\cdot),\
\widetilde U(\cdot)=U(s_0-\cdot),\
\widetilde V(\cdot)=V(s_0-\cdot).
$
Then for $0<\theta<s_0$,
$$
\int_s^{s_0}f(t)dt
=
\int_0^\theta\widetilde f(\tau)d\tau.
$$
Moreover,
$$
\int_0^{s_0}
\bigg|
U(s)\int_s^{s_0} f(t)dt
\bigg|^pds=
\int_0^{s_0}
\bigg|
\widetilde U(\theta)
\int_{s_0-\theta}^{s_0}f(t)dt
\bigg|^pd\theta
=
\int_0^{s_0}
\bigg|
\widetilde U(\theta)
\int_0^\theta \widetilde f(\tau)d\tau
\bigg|^pd\theta,
$$
$$
\int_0^{s_0}|V(s)f(s)|^pds
=
\int_0^{s_0}|\widetilde V(\theta)\widetilde f(\theta)|^pd\theta.
$$
It follows that
\begin{align*}
\sup_{0<\rho<s_0}\!\!
\bigg(\!
\int_\rho^{s_0}\widetilde U(\xi)^p d\xi
\!\bigg)^{\frac1p}
\bigg(\!
\int_0^\rho \widetilde V(\xi)^{-p'} d\xi
\!\bigg)^{\frac1{p'}}
&\!\!=\!
\sup_{0<\rho<s_0}
\bigg(
\int_\rho^{s_0}U(s_0\!-\!\xi)^p d\xi
\bigg)^{\frac1p}
\bigg(
\int_0^\rho V(s_0\!-\!\xi)^{-p'} d\xi
\bigg)^{\frac1{p'}}
\\
&\!\!=\!
\sup_{0<\rho<s_0}
\bigg(
\int_0^\rho U(s)^p ds
\bigg)^{\frac1p}
\bigg(
\int_\rho^{s_0}V(s)^{-p'} ds
\bigg)^{\frac1{p'}}
\\
&\!\!=\!B_-.
\end{align*}
Hence, the second assertion follows from Lemma \ref{lem} with
$L=s_0,$
and
$C_-\leq p(p')^{p-1}B_-^p.$
\end{proof}

Let $W_{\mathrm{loc}}^{1,2}(0,\infty)$ be the space of functions whose values and weak first derivatives are square integrable on every compact subinterval of $(0,\infty)$.
Let $\mathcal D_\nu=\big\{
h\in W_{\mathrm{loc}}^{1,2}(0,\infty):\int_0^\infty
\big(|h(s)|^2+s|h'(s)|^2\big)
s^{\nu-1}e^{-s}ds<\infty\big\}.$

We shall need the following weighted $L^p$-Poincar\'e inequality for
the Gamma measure. See \cite{BJM} for related weighted Poincar\'e
inequalities and spectral gap estimates for one-dimensional diffusions.

\begin{lemma}\label{l2}
Let $1<p<\infty$ and $\nu>0$. Then there exists a constant
$P_{\nu,p}>0$ such that, for $h\in C_c^\infty(0,\infty)$,
\begin{align}\label{f1}
\inf_{c\in\mathbb R}
\int_0^\infty
|h(s)-c|^p s^{\nu-1}e^{-s}ds
\le
P_{\nu,p}
\int_0^\infty
s|h'(s)|^p s^{\nu-1}e^{-s}ds.
\end{align}
Moreover, when $p=2$, the sharp constant is $ P_{\nu,2}=1.$
The inequality \eqref{f1} for $p=2$
can be extended to every
$h\in\mathcal D_\nu,$
with equality if and only if
\begin{align*}
h(s)=c_0+c_1\bigl(s-\nu\bigr)
\end{align*}
for some $c_0,c_1\in\mathbb R$.
\end{lemma}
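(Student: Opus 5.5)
The plan is to treat the general-$p$ inequality on $C_c^\infty(0,\infty)$ by splitting the half-line at a point $s_0>0$ and applying the two Hardy-type estimates of Lemma~\ref{l1}. For $p=2$ we will instead use the spectral decomposition of the Laguerre operator.

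\textbf{General $p$.} Fix $s_0$, say $s_0=\nu$, and set $c=h(s_0)$. For $s>s_0$ write $h(s)-c=\int_{s_0}^s h'(t)\,dt$, and for $s<s_0$ write $h(s)-c=-\int_s^{s_0}h'(t)\,dt$. Put $w(s)=s^{\nu-1}e^{-s}$ and take
\[
U=w^{1/p},\qquad V=(s\,w)^{1/p}
\]
on each side. Then $\inf_c\int|h-c|^pw\le\int|h-h(s_0)|^pw$ is bounded by $C_++C_-$ times $\int s|h'|^pw$, after applying Lemma~\ref{l1} to $f=|h'|$.

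It remains to check $B_\pm<\infty$.

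On $(s_0,\infty)$:
\begin{itemize}
\item $\int_\rho^\infty w\sim\rho^{\nu-1}e^{-\rho}$ as $\rho\to\infty$.
\item $\int_{s_0}^\rho (sw)^{-p'/p}\,ds=\int_{s_0}^\rho s^{-\nu p'/p}e^{sp'/p}\,ds\sim C\rho^{-\nu/(p-1)}e^{\rho/(p-1)}$.
\item Raising to the powers $1/p$ and $1/p'$, the product behaves like $\rho^{(\nu-1)/p-\nu/p}$, which is bounded.
\item Near $s_0$ the product is trivially bounded.
\end{itemize}

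On $(0,s_0)$:
\begin{itemize}
\item $\int_0^\rho w\sim\rho^\nu/\nu$.
\item $\int_\rho^{s_0}s^{-\nu/(p-1)}e^{s/(p-1)}\,ds\lesssim\rho^{1-\nu/(p-1)}$ when $\nu>p-1$, is $\lesssim\log(s_0/\rho)$ in the equality case, and is bounded otherwise.
\item In every case the product $(\rho^\nu)^{1/p}(\cdot)^{1/p'}$ stays bounded: in the first case the exponent is $\nu/p+(p-1)/p-\nu/p>0$.
\end{itemize}
This gives $P_{\nu,p}$ with an explicit bound $p(p')^{p-1}(B_+^p+B_-^p)$ up to a factor.

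\textbf{The case $p=2$.} The form $\int s|h'|^2w$ is the Dirichlet form of the Laguerre operator
\[
Lh=s h''+(\nu-s)h'
\]
on $L^2(w\,ds)$. Its eigenfunctions are the generalized Laguerre polynomials $L_k^{(\nu-1)}$, with eigenvalues $k$, and they form a complete orthogonal basis of $L^2(w)$. Expanding $h=\sum a_kL_k^{(\nu-1)}$ gives
\[
\inf_c\int|h-c|^2w=\sum_{k\ge1}a_k^2\|L_k\|^2\le\sum_k k\,a_k^2\|L_k\|^2 .
\]
Equality holds iff $a_k=0$ for $k\ge2$, i.e. $h=c_0+c_1(s-\nu)$, since $L_1^{(\nu-1)}=\nu-s$. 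Sharpness $P_{\nu,2}=1$ follows by testing $h=s-\nu$, truncated and mollified to lie in $C_c^\infty$, which makes the ratio tend to $1$.

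\textbf{Main obstacle: extension to $\mathcal D_\nu$.} The key claim is that for $h\in\mathcal D_\nu$ the energy equals $\sum_k k\,a_k^2\|L_k\|^2$. This amounts to integrating by parts
\[
\int s\,h'\,(L_k)'\,w=k\int h\,L_k\,w
\]
with vanishing boundary terms $s^\nu e^{-s}h(s)L_k'(s)$ as $s\to0$ and $s\to\infty$.

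I would first prove $\liminf$ of these boundary terms is $0$ along suitable sequences, using integrability of $h^2w$ and $s h'^2w$. For example, $\int_0^1 s^{\nu}|h'|^2s^{-1}\,ds<\infty$ combined with a Hardy/Cauchy–Schwarz bound gives $|h(s)|\lesssim s^{-\nu/2}o(1)$ at $0$ (or $\log$ control when this degenerates), which makes $s^\nu h\to0$ along a sequence.

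Alternatively, I would show $C_c^\infty$ is dense in $\mathcal D_\nu$ for the graph norm via cutoffs $\chi(s/R)$ and $\chi(\log s/\log\epsilon)$-type logarithmic cutoffs at $0$. Then both sides pass to the limit, and the equality characterization transfers because the expansion identity holds in $\mathcal D_\nu$ directly.
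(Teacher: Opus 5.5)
Your overall route is the paper's: the same Hardy weights $U=(s^{\nu-1}e^{-s})^{1/p}$ and $V=(s^{\nu}e^{-s})^{1/p}$ split at a point (the paper takes $s_0=1$), and the Laguerre expansion for $p=2$. The general-$p$ part is fine. The $p=2$ part has two problems.

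\textbf{The sharpness test fails for small $\nu$.} For $0<\nu<1$ the origin has positive capacity for the energy $\int s^\nu|h'|^2e^{-s}\,ds$, so $s-\nu$ cannot be cut off to zero near $0$ at vanishing cost. Suppose $h_j\in C_c^\infty(0,\infty)$ equals $s-\nu$ on $[1/j,j]$. Cauchy--Schwarz gives $|h_j(1/j)|^2\le \frac{j^{\nu-1}}{1-\nu}\int_0^{1/j}t^\nu|h_j'|^2\,dt$. Hence $\int_0^{1/j}t^\nu|h_j'|^2\,dt\ge(1-\nu)j^{1-\nu}(\nu-1/j)^2\to\infty$, and your ratio does not tend to $1$. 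At $\nu=1$, a cutoff supported in $[1/(2j),1/j]$ still costs energy bounded below. The left-hand side is invariant under $h\mapsto h+c$, so truncate $s$ instead, e.g.\ $h_j=s\eta_j(s)$ as the paper does. Then $h_j'-1$ is bounded near $0$, and the cutoff error there is $O(\int_0^{1/j}s^\nu\,ds)$.

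The same estimate shows your ``alternatively'' density route is false for $\nu<1$. Every element of the $\mathcal D_\nu$-closure of $C_c^\infty(0,\infty)$ tends to $0$ at $0^+$, whereas constants lie in $\mathcal D_\nu$. So keep the boundary-term route, which does work. Integrability of $h^2s^{\nu-1}e^{-s}$ near $0$ gives a sequence along which $s^\nu h^2\to0$. Near $\infty$ it gives a sequence along which $s^{\nu-1}e^{-s}h^2$ stays bounded. Either fact kills $s^\nu e^{-s}hL_k'$ at the corresponding end, because $L_k'$ is a polynomial. Note also that the energy controls $\int_0^1 s^\nu|h'|^2\,ds$, not $\int_0^1 s^{\nu-1}|h'|^2\,ds$ as you wrote.

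\textbf{The energy identity needs more than integration by parts.} Your ``key claim'' is not only a boundary-term issue. Integration by parts yields $\int_0^\infty s\,h'L_k'\,w\,ds=k\,a_k\|L_k\|^2$. These are the coefficients of $h'$ against the orthogonal family $\{L_k'\}_{k\ge1}$ in $L^2((0,\infty),s\,w\,ds)$, whose squared norms are $k\|L_k\|^2$. To conclude that $\int s|h'|^2w$ equals $\sum_k k\,a_k^2\|L_k\|^2$, you need this family to be complete in that space. The paper obtains completeness from $(L_k^{(\nu-1)})'=-L_{k-1}^{(\nu)}$ and $s\,d\mu_\nu=\nu\,d\mu_{\nu+1}$.

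Alternatively, Bessel's inequality already gives $\inf_c\int|h-c|^2w\le\sum_{k\ge1}k\,a_k^2\|L_k\|^2\le\int s|h'|^2w$. Equality then forces $a_k=0$ for $k\ge2$ and equality in Bessel. So $h'$ is a multiple of $L_1'$, hence constant, and $h=c_0+c_1(s-\nu)$. Either way this step must be supplied, since as written your equality characterization rests on an unjustified identity.
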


\begin{proof}
We first show \eqref{f1} by using Lemma \ref{l1}.
Fix $s_0=1$, and let
\[
U(s)=s^{\frac{\nu-1}{p}}e^{-\frac{s}{p}},\quad V(s)=s^{\frac{\nu}{p}}e^{-\frac{s}{p}}.
\]
Then the corresponding Hardy quantities are
\begin{align*}
B_+&=\sup_{\rho>1}
\bigg(\int_\rho^\infty s^{\nu-1}e^{-s}ds\bigg)^{\frac{1}{p}}
\bigg(\int_1^\rho s^{-\frac{\nu}{p-1}}e^{\frac{s}{p-1}}ds\bigg)^{\frac{1}{p'}},\\
B_-&=\sup_{0<\rho<1}
\bigg(\int_0^\rho s^{\nu-1}e^{-s}ds\bigg)^{\frac{1}{p}}
\bigg(\int_\rho^1 s^{-\frac{\nu}{p-1}}e^{\frac{s}{p-1}}ds\bigg)^{\frac{1}{p'}}.
\end{align*}

For $\rho$ big enough, there exists $C$ (may depend on $\nu$ and $p$, but does not depend on $\rho$) such that
\begin{equation}\label{eq2.3}
\int_\rho^\infty s^{\nu-1}e^{-s}ds\le C\rho^{\nu-1}e^{-\rho},\quad
\int_1^\rho s^{-\frac{\nu}{p-1}}e^{\frac{s}{p-1}}ds
\le C\rho^{-\frac{\nu}{p-1}}e^{\frac{\rho}{p-1}}.
\end{equation}
The first estimate is the standard Gamma tail bound (see, e.g., \cite[Chapter 6]{AS}).
The second estimate follows by differentiating $s^{-\frac{\nu}{p-1}}e^{\frac{s}{p-1}}$ for sufficiently large $s$.
In fact, letting $g(s)=s^{-\frac{\nu}{p-1}}e^{\frac{s}{p-1}},s>0$, its derivative satisfies
$$\frac{g'}{g}=\frac{1}{p-1}\Big(1-\frac{\nu}{s}\Big)\geq\frac{1}{2(p-1)},$$
if $s>S$ for some $S>1$.
Since $c_0=\int_1^{S} g(s)ds<\infty$ and $\int_{S} ^\rho g(s)ds
\leq 2(p-1)\int_{S} ^\rho g'(s)ds$,
it holds
\begin{align*}
\int_1^\rho g(s)ds\leq c_0+2(p-1)(g(\rho)-g(S))\leq c_0+ 2(p-1)g(\rho)\leq Cg(\rho),
\end{align*}
for sufficiently large $\rho$.
As a consequence of \eqref{eq2.3}, we get $\big(\int_\rho^\infty s^{\nu-1}e^{-s}ds\big)^{\frac{1}{p}}
\big(\int_1^\rho s^{-\frac{\nu}{p-1}}e^{\frac{s}{p-1}}ds\big)^{\frac{1}{p'}}\leq C \rho^{-\frac{1}{p}}$ as $\rho\to\infty$. Clearly, this product converges to zero as $\rho\to 1^+$. Thus, $B_+ <\infty$.

For $0<\rho<1$, we have $\int_0^\rho s^{\nu-1}e^{-s}ds\leq  \int_0^\rho s^{\nu-1}ds=\frac{\rho ^{\nu}}{\nu}$,
and $\int_\rho^1 s^{-\frac{\nu}{p-1}}e^{\frac{s}{p-1}}ds\leq e^{\frac{1}{p-1}}\int_\rho^1 s^{-\frac{\nu}{p-1}}ds$.
Hence, the product defining $B_-$ is bounded by
\[
C\begin{cases}
\rho^{\frac{\nu}{p}},&\nu<p-1,\\
\rho^{\frac{\nu}{p}}|\log(\rho)|^{\frac{1}{p'}},&\nu=p-1,\\
\rho^{\frac{p-1}{p}},&\nu>p-1.
\end{cases}
\]
These bounds tend to zero as $\rho\rightarrow0^+$, and remain bounded as $\rho\rightarrow1^-$.
The continuity shows $B_-<\infty$.
Since $|h(s)-h(1)|\le\bigl|\int_1^s|h'(t)|dt\bigr|$, it yields \eqref{f1} by applying Lemma~\ref{l1} to $f=|h'|$, with
$P_{\nu,p}=p(p')^{p-1}\max\{B_+^p,B_-^p\}$.

Let us consider the case $p=2$. Let $d\mu_\nu(s)=\frac{s^{\nu-1}e^{-s}}{\Gamma(\nu)}ds=w_\nu (s)ds$ be the normalized Gamma measure, and let $\overline h_\nu=\int_0^\infty h\,d\mu_\nu$.
For $h\in L^2(\mu_\nu)$,
\[
\int_0^\infty |h-c|^2\,d\mu_\nu
=\int_0^\infty|h-\overline h_\nu|^2d\mu_\nu+|\overline h_\nu-c|^2.
\]
Then $\inf_{c\in\mathbb R}\int |h-c|^2\,d\mu_\nu
=\operatorname{Var}_{\mu_\nu}(h).$
Therefore, the required inequality is equivalent to
\begin{equation}\label{eq2.4}
\operatorname{Var}_{\mu_\nu}(h)
\le
\int_0^\infty s|h'(s)|^2\,d\mu_\nu(s).
\end{equation}

Let us show \eqref{eq2.4} by using the generalized Laguerre polynomials.
For each integer $n\geq0$, the generalized Laguerre polynomial is defined by
\[
P_n=L_n^{(\nu-1)}(s)=\frac{s^{1-\nu}e^s}{n!}
\frac{d^n}{ds^n}\bigl(e^{-s}s^{n+\nu-1}\bigr).
\]
These polynomials form a complete orthogonal basis of $L^2(\mu_\nu)$ and satisfy
$sP_n''+(\nu-s)P_n'=-nP_n.$
See, e.g., \cite[Section~5.1 and Theorem~5.7.1]{Szego} for the Laguerre polynomial identities and completeness.
Therefore, if we define the operator $L_\nu h=sh''+(\nu-s)h'$, one has
$L_\nu P_n=-nP_n$.
By the definition, we have
$P_0=1,~P_1=\nu-s.$
For $h\in\mathcal D_\nu$, we have
$$h=\sum_{n=0}^\infty a_nP_n=\overline h_\nu+\sum_{n=1}^\infty a_nP_n, \quad a_n=\frac{\int_0^\infty hP_n\,d\mu_\nu}
{\|P_n\|_{L^2(\mu_\nu)}^2}.$$
By the orthogonality, we have
\begin{equation}\label{eq2.5}
\operatorname{Var}_{\mu_\nu}(h)
=\int_0^\infty|h-\overline h_\nu|^2d\mu_\nu
=\sum_{n\ge1}a_n^2\|P_n\|_{L^2(\mu_\nu)}^2.\end{equation}

Since $L_\nu g=sg''+(\nu-s)g'=
\frac1{w_\nu}(sw_\nu g')'$ where $w_\nu$ is the density of $\mu_\nu$,
the integration by parts for the polynomials gives, for $m,n\ge1$,
\[
\int_0^\infty sP_m'P_n'\,d\mu_\nu
=-\int_0^\infty P_mL_\nu P_n\,d\mu_\nu
=n\delta_{mn}\|P_n\|_{L^2(\mu_\nu)}^2.
\]
The boundary terms vanish since
$sw_\nu(s)=s^\nu e^{-s}/\Gamma(\nu)$ tends to zero
as $s\to0^+$, while the exponential factor dominates
the polynomial factors as $s\to\infty$.
We define
\[
q_n=\frac{P_n'}{\sqrt n\,\|P_n\|_{L^2(\mu_\nu)}},
\quad n\geq1.
\]
Then $\{q_n\}_{n\geq1}$ is an orthonormal basis of
$L^2((0,\infty),s\,d\mu_\nu)$.
Indeed, its completeness follows from
\[P_n'=-L_{n-1}^{(\nu)},\quad s\,d\mu_\nu=\nu\,d\mu_{\nu+1},\]
and the completeness of the generalized Laguerre polynomials
in $L^2(\mu_{\nu+1})$.
For $h\in\mathcal D_\nu$, we set
\[\mathcal E_\nu(h)=\int_0^\infty s|h'|^2\,d\mu_\nu.\]
Since $h$ is locally absolutely continuous, the Cauchy-Schwarz inequality implies that
\[
|h(s)|
\le |h(1)|
+\bigl(\Gamma(\nu)\mathcal E_\nu(h)\bigr)^{\frac12}
\left(
\int_{\min\{s,1\}}^{\max\{s,1\}}
t^{-\nu}e^t\,dt
\right)^{\frac12}<\infty.
\]
Then it follows that
$\lim\limits_{s\to0^+}sw_\nu(s)h(s)P_n'(s)=
\lim\limits_{s\to\infty}sw_\nu(s)h(s)P_n'(s)=0$, for every $n\ge1$.
From integration by parts, we get
\[
\int_0^\infty sh'P_n'\,d\mu_\nu
=-\int_0^\infty hL_\nu P_n\,d\mu_\nu
=n a_n\|P_n\|_{L^2(\mu_\nu)}^2.
\]
Therefore,
\[
\int_0^\infty sh'q_n\,d\mu_\nu
=\sqrt n\,a_n\|P_n\|_{L^2(\mu_\nu)}.
\]
Applying Parseval's identity to $h'$ in
$L^2((0,\infty),s\,d\mu_\nu)$, we obtain
\begin{equation}\label{eq2.6}
\mathcal E_\nu(h)
=\sum_{n\ge1}
\left|\int_0^\infty sh'q_n\,d\mu_\nu\right|^2
=\sum_{n\ge1}n a_n^2
\|P_n\|_{L^2(\mu_\nu)}^2.
\end{equation}
By \eqref{eq2.5} and \eqref{eq2.6}, we get $\operatorname{Var}_{\mu_\nu}(h)\le\mathcal E_\nu(h)$, with equality precisely when $a_n=0$ for $n\ge2$.
As $P_0=1$ and $P_1=\nu-s$, this gives $h=c_0+c_1(s-\nu)$.

To verify sharpness on $C_c^\infty(0,\infty)$ for every $\nu>0$, choose $0\le\eta_j\le1$ with
$\eta_j=1$ on $[j^{-1},j]$, supported in $[(2j)^{-1},2j]$, and
$|\eta_j'|\le Cj$ near zero and $|\eta_j'|\leq C j^{-1}$ near infinity.
For $h_j(s)=s\eta_j(s)$, we have that $
h_j'=\eta_j+s\eta_j'$. Thus $h_j'=1$ on $[1/j,j]$, and $
|h_j'-1|
\le C\,\mathbf\chi_{(0,1/j)\cup(j,\infty)}$ on $(0,1/j)\cup(j,\infty)$.
Then
\[
h_j\rightarrow s\ \hbox{in }L^2(\mu_\nu)\text{ as }j\to\infty,\quad
\int_0^\infty s\bigl|h_j'(s)-1\bigr|^2d\mu_\nu(s)\rightarrow0\ \text{as }j\to\infty.
\]
The contribution from $(0,j^{-1})$ is bounded by
$C\int_0^{1/j}s^\nu ds$, while the contribution from
$(j,\infty)$ converges to zero as $j\to\infty$.
Since $\operatorname{Var}_{\mu_\nu}(s)=\int s^2d\mu_\nu(s)-(\int sd\mu_\nu(s))^2=\nu$ and $\mathcal E_\nu(s)=\nu$, we have
\[
\frac{\operatorname{Var}_{\mu_\nu}(h_j)}
{\mathcal E_\nu(h_j)}
\rightarrow 1
\quad\text{as }j\to\infty.
\]
Hence $P_{\nu,2}=1$ is sharp.
\end{proof}

\section{Sharp \texorpdfstring{$L^2$}{L2} stability of anisotropic radial inequality}\label{s-sharp}

This section is devoted to
the sharp $L^2$ stability of anisotropic radial inequality.
The proof consists of three steps: reducing the parameter region,
showing that the functions attaining equality belong to the completion space,
and minimizing over the common scale.

\subsection{Sharp stability of $L^2$ CKN inequality}

We consider the following two parameter regions:
\[
\mathcal A\!=\!\{(a,b)\!\in\!\mathbb R^2\!:\!b+1-a\ne0,(b+1-a)(N-2b-2)\!\ge\!0\},
\mathcal B\!=\!\{(a,b)\!\in\!\mathbb R^2\!:\!(b+1-a)(N-2b-2)\!<\!0\}.
\]
It follows from \eqref{eq:unified-parameters} that
$\gamma=0$ on $\mathcal A$ and $\gamma=2b+2-N$ on $\mathcal B$.

The following lemma gives a reduction from the case of $\mathcal B$ to the case $\mathcal A$.

\begin{lemma}\label{sharp:conjugation}
Let $K\in\mathcal K^{ss}_{(o)}$.
For $(a,b)\in\mathcal B$, we write \[a_*=a-(2b+2-N),\ b_*=b-(2b+2-N).\]
Then $(a_*,b_*)\in\mathcal A$, and the parameters $\nu$ and $b+1-a$
are unchanged under the transforms $(a,b)\mapsto (a_*,b_*)$. The map
$T:\mathcal C^2_{K,a_*,b_*}\rightarrow\mathcal C^2_{K,a,b},$
defined by $T v=r^{2b+2-N}v$, is an isometric isomorphism such that
for $v\in \mathcal C^2_{K,a_*,b_*}$,
\begin{equation}\label{sharp:conjugation-norms}
I_a(Tv)=I_{a_*}(v),\quad D_b(Tv)=D_{b_*}(v),\quad
I_{a,b}(Tv)=I_{a_*,b_*}(v).
\end{equation}
And $T$ maps the
extremal family for $(a_*,b_*)$ onto $\mathcal M_{K,a,b}$.
\end{lemma}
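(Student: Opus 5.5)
Set $k=2b+2-N$, so that $(a,b)\in\mathcal B$ means $(b+1-a)k>0$. I will check the parameter claims first, then the three identities \eqref{sharp:conjugation-norms} on $C_c^\infty(\mathbb R^N\setminus\{o\})$, then extend to the completions, and finally treat the extremal families.

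\emph{Parameters.} Since $a_*=a-k$ and $b_*=b-k$, we have $b_*+1-a_*=b+1-a$. Also
\[
N-2b_*-2=N-2b-2+2k=-(N-2b-2)=k .
\]
Hence $|N-2b_*-2|=|N-2b-2|$, so $\nu$ is unchanged. Moreover $(b_*+1-a_*)(N-2b_*-2)=(b+1-a)k>0$, which gives $(a_*,b_*)\in\mathcal A$.

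\emph{Identities for smooth functions.} Take $v\in C_c^\infty(\mathbb R^N\setminus\{o\})$ and put $u=Tv=r^k v$. This $u$ again lies in $C_c^\infty(\mathbb R^N\setminus\{o\})$, because $r=\|x\|_K$ is smooth away from $o$.

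The two zeroth-order identities are immediate, since $2a=2a_*+2k$ and $a+b+1=a_*+b_*+1+2k$:
\[
I_a(u)=\int r^{2k}v^2r^{-2a}dx=I_{a_*}(v),\qquad I_{a,b}(u)=I_{a_*,b_*}(v).
\]

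The derivative term is the only real computation. In polar form, $\partial_r u=r^k(\partial_r v+kr^{-1}v)$, so
\[
D_b(u)=\int\bigl(|\partial_r v|^2+2kr^{-1}v\,\partial_r v+k^2r^{-2}v^2\bigr)r^{2k-2b}dx .
\]
The first term is exactly $D_{b_*}(v)$, since $2k-2b=-2b_*$. For the cross term, write $2v\,\partial_r v=\partial_r(v^2)$ and integrate by parts in $r$ against $r^{N-1}d\mu_K$. Along each ray this is
\[
\int_0^\infty k\,\partial_r(v^2)\,r^{2k-2b-1+N-1}dr
=-k(2k-2b+N-2)\int_0^\infty v^2 r^{2k-2b+N-3}dr ,
\]
with no boundary terms because $v$ has compact support away from $o$. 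Now $2k-2b+N-2=k$, so the cross term equals $-k^2\int r^{-2}v^2r^{2k-2b}dx$. This cancels the $k^2$ term exactly, and therefore $D_b(u)=D_{b_*}(v)$.

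\emph{Extension to the completions.} By these identities, $T$ is a linear bijection of $C_c^\infty(\mathbb R^N\setminus\{o\})$ onto itself, with inverse multiplication by $r^{-k}$. It preserves the norm \eqref{eq:completion-norm} with $p=2$. Hence $T$ extends uniquely to an isometric isomorphism $\mathcal C^2_{K,a_*,b_*}\to\mathcal C^2_{K,a,b}$.

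I expect the main obstacle here: one must check that the extension really acts as pointwise multiplication by $r^k$ on the $L^2_{\rm loc}$ representatives, and that \eqref{sharp:conjugation-norms} passes to the limit. Convergence in $\mathcal C^2$ implies convergence in $L^2_{\rm loc}(\mathbb R^N\setminus\{o\})$, and $r^{\pm k}$ is bounded on compact subsets of $\mathbb R^N\setminus\{o\}$. So the limits commute with multiplication by $r^k$. The identities then follow by continuity of $D_b$, $I_a$ and $I_{a,b}$, the last one through the continuous embedding of $\mathcal C^2$ into $L^2(r^{-a-b-1}dx)$ noted after \eqref{eq:unified-parameters}.

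\emph{Extremal families.} For $(a_*,b_*)\in\mathcal A$ we have $\gamma_*=0$. On $\mathcal B$ we have $\gamma=2b+2-N=k$, and the exponent $b+1-a$ is unchanged. Therefore
\[
r^k\,\Phi(\sigma)\,e^{-\eta r^{b+1-a}/|b+1-a|}=\Phi(\sigma)\,g_\eta(r),
\]
so $T$ maps the family for $(a_*,b_*)$ onto $\mathcal M_{K,a,b}$. This is a bijection between the two families, with $\Phi$ and $\eta$ unchanged.
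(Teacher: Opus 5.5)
The gap is the sentence asserting that $r^kv\in C_c^\infty(\mathbb R^N\setminus\{o\})$ because $\|x\|_K$ is smooth away from $o$. Everything else matches the paper's argument: your parameter check, the three identities (in particular the cancellation of the cross term via $2k-2b+N-2=k$), the identification of the extended map as pointwise multiplication by $r^k$, and the treatment of the extremal families.

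In this paper a convex body is called smooth only in the convex-geometric sense: every boundary point has a unique outer unit normal. That gives $\|\cdot\|_K\in C^1(\mathbb R^N\setminus\{o\})$ and nothing more. So $r^kv$ is in general only a $C^1$ function supported in an annulus, and $T$ need not map $C_c^\infty(\mathbb R^N\setminus\{o\})$ into itself, let alone onto itself. Your extension step rests on exactly that bijection. Without it you have not shown that $Tv$ belongs to $\mathcal C^2_{K,a,b}$, which is the completion of $C_c^\infty$ and not of $C^1_c$. Nor have you shown that the extended isometry is surjective.

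The repair is the one the paper makes. Mollify $r^kv$ to obtain $f_\ell\in C_c^\infty$ supported in a slightly larger annulus, with $f_\ell\to r^kv$ in $W^{1,2}$. Since $|\mathcal R_K(f)|\le C_K|\nabla f|$ with $C_K=\max_{\sigma\in\partial K}|\sigma|$, and all weights are bounded on that annulus, $f_\ell\to r^kv$ in the $\mathcal C^2_{K,a,b}$-norm. Hence $r^kv$ defines an element of $\mathcal C^2_{K,a,b}$ whose norm is given by the usual integrals. Your ray-by-ray computation uses only $C^1$ regularity in $r$, so the identities hold for $r^kv$. Therefore $T$ is an isometry of $C_c^\infty$ into $\mathcal C^2_{K,a,b}$. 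The same argument applied to multiplication by $r^{-k}$ produces the inverse map, which gives surjectivity. With this step inserted, the rest of your proof goes through as written.
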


\begin{proof}
By the definition of $a_*,b_*$, we have
$b_*+1-a_*=b+1-a$ and $N-2b_*-2=-\bigl(N-2b-2\bigr)$, and then
$(a_*,b_*)\in\mathcal A$.
The transformation also preserves $\nu=1+\frac{|N-2b-2|}{|b+1-a|}$.
We denote by $k=2b+2-N$ for brevity.
For a smooth $v\in C^\infty_c(\mathbb R^N\setminus\{o\})$,
it follows from the anisotropic polar formula that
\[
D_b(r^kv)=\int_{\partial K}\int_0^\infty
\bigg|\partial_rv+\frac{k}{r}v\bigg|^2r^{k+1}\,dr\,d\mu_K.
\]
For each fixed angle, it follows from integration by parts that
\[
k\int_0^\infty 2v(\partial_r v)r^k\,dr=
k\int_0^\infty\partial_r(v^2)r^k\,dr
=-k^2\int_0^\infty v^2r^{k-1}\,dr.
\]
Therefore,
\[
D_b(r^kv)=\int_{\partial K}\int_0^\infty
\bigg|\partial_rv+\frac{k}{r}v\bigg|^2r^{k+1}\,dr\,d\mu_K=\int_{\partial K}\int_0^\infty
(\partial_rv)^2r^{k+1}\,dr\,d\mu_K=D_{b_*}(v).
\]
Similarly, we get the other identities in \eqref{sharp:conjugation-norms},
by substituting $u=r^kv$ into $I_a(u)$ and $I_{a,b}(u)$.

Since $r=\|x\|_K$ is $C^1$ away from the origin,
$r^kv$ has compact support in an annulus and belongs to $W^{1,2}$ there.
Choose smooth functions $f_\ell$, compactly supported in a slightly larger annulus, such that $f_\ell\to r^kv$ in $W^{1,2}$. The estimate
\[
\bigl|\mathcal R_K(f)\bigr|\le C_K|\nabla f|,
\quad C_K=\max_{\sigma\in\partial K}|\sigma|<\infty,
\]
implies $\|f_\ell-r^kv\|_{\mathcal C^2_{K,a,b}}\leq C \|f_\ell-r^kv\|_{W^{1,2}}\to 0$.
Hence $r^kv$ be in $\mathcal C^2_{K,a,b}$.
The first two identities show $\|Tv\|_{\mathcal C^2_{K,a,b}}
=\|v\|_{\mathcal C^2_{K,a_*,b_*}}.$
The same argument for multiplication by $r^{-k}$ gives its inverse.
Therefore, $T$ can be extended isometrically to all $v\in \mathcal C^2_{K,a_*,b_*}$. In this extension, we also have $Tv=r^k v$ almost everywhere.
Indeed, choose a sequence $v_\ell$ of test functions converging to $v$ in
$\mathcal C^2_{K,a_*,b_*}$ as $\ell\to\infty$. Then both
$v_\ell\rightarrow v$ and $Tv_\ell\rightarrow Tv$ locally in $L^2$
as $\ell\to\infty$. Thus $Tv=r^kv$ almost everywhere.
Then one has the weighted $L^2$ identity.
The statement about the extremal families follows
from $g_\eta(r)=r^ke^{-\frac{\eta r^{b+1-a}}{|b+1-a|}}$ in $\mathcal B$.
\end{proof}

For $(a,b)\in\mathcal A$, we have $N-a-b-1=\nu\bigl(b+1-a\bigr)$.
It follows from integration by parts
and the Cauchy-Schwarz inequality that
\begin{align}\nonumber
\nu\bigl|b+1-a\bigr|I_{a,b}(u)
&=\bigl|N-a-b-1|\int_{\partial K}\int_0^\infty u^2 r^{N-a-b-2}drd\mu_K(\sigma)\\\nonumber
&=-2{\rm sgn}(N-a-b-1)\int_{\partial K}\int_0^\infty u \partial_r(u) r^{N-a-b-1}drd\mu_K(\sigma)\\
&\leq2\sqrt{D_b(u)I_a(u)}\leq D_b(u)+I_a(u),\label{sharp:embedding}
\end{align}
for every
$u\in C_c^\infty(\mathbb R^N\setminus\{o\})\setminus\{0\}$.
This proves the continuous embedding of $\mathcal C^2_{K,a,b}$
into $L^2(\RN,r^{-a-b-1}dx)$, if $(a,b)\in\mathcal A$.
Lemma~\ref{sharp:conjugation} transfers this statement to
$\mathcal B$. In particular, all deficits and distances in
Theorem~\ref{thm:unified} are well defined on $\mathcal C^2_{K,a,b}$.

We recall that for $\lambda>0$,
$g_\lambda(r)=r^\gamma e^{-\frac{\lambda r^{b+1-a}}{|b+1-a|}}$, and $d\mu_\nu(s)=\frac{s^{\nu-1}e^{-s}}{\Gamma(\nu)}\,ds$, for $s>0$.

\begin{lemma}\label{sharp:density}
For every $\lambda>0$ and $\Phi_0,\Phi_1\in L^2(\partial K,d\mu_K)$,
the function
\[
u(r,\sigma)=g_\lambda(r)h\left(\frac{2\lambda}{|b+1-a|} r^{b+1-a},\sigma\right),\quad
h(s,\sigma)=\Phi_0(\sigma)+\Phi_1(\sigma)\bigl(s-\nu\bigr),
\]
belongs to $\mathcal C^2_{K,a,b}$.
\end{lemma}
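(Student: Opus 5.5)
We need to show $u=g_\lambda(r)h(s,\sigma)$ lies in the completion $\mathcal C^2_{K,a,b}$, i.e.\ that it can be approximated in the norm \eqref{eq:completion-norm} by functions in $C_c^\infty(\mathbb R^N\setminus\{o\})$. We plan to reduce first to the region $\mathcal A$: for $(a,b)\in\mathcal B$, Lemma~\ref{sharp:conjugation} shows that $T$ maps $\mathcal C^2_{K,a_*,b_*}$ isometrically onto $\mathcal C^2_{K,a,b}$, with $Tv=r^{2b+2-N}v$. Since $g_\lambda$ for $(a,b)$ equals $r^{2b+2-N}$ times the $g_\lambda$ for $(a_*,b_*)$, and $\nu$ and $b+1-a$ are unchanged, it suffices to treat $(a,b)\in\mathcal A$, where $\gamma=0$. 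By linearity in $(\Phi_0,\Phi_1)$, it also suffices to treat $u=\Phi(\sigma)\,e^{-s/2}p(s)$, with $p(s)=1$ or $p(s)=s-\nu$ and $s=\frac{2\lambda}{|b+1-a|}r^{b+1-a}$.

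The first step is to verify finiteness of the norm. Set $\beta=b+1-a$. The change of variables $r\mapsto s$ gives $r^{N-1}dr = c\,s^{\frac{N}{\beta}-1}ds$ up to constants, and $r^{-2a}=c\,s^{-2a/\beta}$. Using $N-a-b-1=\nu\beta$, both $I_a(u)$ and $D_b(u)$ reduce to $\|\Phi\|_2^2$ times integrals of the form $\int_0^\infty q(s)^2 s^{\nu-1+m}e^{-s}ds$, with $m\in\{\pm1\}$ and $q$ a polynomial. Indeed, $\partial_r u = \partial_r s\cdot(\cdots)$ and $\partial_r s = \beta s/r$. More precisely, $I_a$ produces the exponent $s^{\nu-1}\cdot s^{(1-b-a... )}$; I will do the bookkeeping with the identities $D_b(g_\lambda)=\lambda^2 I_a(g_\lambda)$ and $I_{a,b}$ as the reference, noting that $I_a$ corresponds to the weight $s^{\nu-2}$ up to constants and $D_b$ to $s^{\nu}$. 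Integrability at $s=0$ then requires $\nu-2>-1$, i.e.\ $\nu>1$, or else that the polynomial factor vanishes there. This is the delicate point: when $\nu=1$ (i.e.\ $N=2b+2$), $I_a(u)$ may diverge at the origin. I expect to check this carefully. Since $\nu\ge1$ and the weight in $I_a$ is exactly the one for which the sharp inequality \eqref{sharp:embedding} has constant $\nu\beta/2$, I anticipate that the correct exponent is $s^{\nu-1}\cdot s^{-1}\cdot s$, making all three quantities finite integrals against $s^{\nu-1}e^{-s}$ times polynomials. In any case, finiteness reduces to convergence of Gamma-type integrals.

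The second step is the approximation, which is the main obstacle. Approximate $\Phi$ in $L^2(\partial K,d\mu_K)$ by smooth functions $\Phi_j$. The norm is linear in $\Phi$ with the radial factor fixed, so the error is $\|\Phi-\Phi_j\|_2^2$ times a finite radial constant. Then truncate radially with cutoffs $\eta_j(r)$ equal to $1$ on $[1/j,j]$, using logarithmic cutoffs near $0$ if necessary, exactly as in the sharpness argument of Lemma~\ref{l2}. The resulting functions $\Phi_j(\sigma)\eta_j(r)e^{-s/2}p(s)$ are compactly supported in an annulus and belong to $W^{1,2}$ there. Since $\sigma_K$ is $C^1$ away from $o$ for $K\in\Ksc$ (if only Lipschitz, approximate further), they lie in $\mathcal C^2_{K,a,b}$ by the mollification argument of Lemma~\ref{sharp:conjugation}, using $|\mathcal R_K f|\le C_K|\nabla f|$.

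The cutoff error term $\int|\eta_j'|^2|u|^2r^{-2b}dx$ is the hard part. Near infinity it vanishes by exponential decay. Near $0$, $|u|\approx$ const, and the error is about $\int_{1/(2j)}^{1/j} j^2 r^{N-1-2b}dr \sim j^{-(N-2b-2)}$. This tends to zero when $N-2b-2>0$. In the borderline case $N=2b+2$ (where $\nu=1$), I would instead use the logarithmic cutoff $\eta_j(r)=\log(jr)/\log j$ on $[1/j,1]$, whose error is about $1/\log j\to0$. The $I_a$ contribution converges by dominated convergence. Combining these steps shows the truncations converge to $u$ in $\mathcal C^2_{K,a,b}$, so $u\in\mathcal C^2_{K,a,b}$.
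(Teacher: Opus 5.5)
Your overall plan follows the paper: reduce to $\mathcal A$ via $T$, use Gamma moments for finiteness, approximate the angular coefficients in $L^2$, truncate radially with a logarithmic cutoff at the borderline $\nu=1$, and mollify using $|\mathcal R_K f|\le C_K|\nabla f|$. As written, however, two steps are wrong or missing.

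\textbf{Finiteness.} You assert that $I_a$ carries the weight $s^{\nu-2}$ and that $I_a(u)$ may diverge at $\nu=1$, and you leave this unresolved. If that were true, the lemma would fail at $\nu=1$ whenever $\Phi_0\ne0$. Your ``anticipated'' exponent $s^{\nu-1}\cdot s^{-1}\cdot s=s^{\nu-1}$ is also not the right one. In $\mathcal A$ one has $N-2a=(\nu+1)(b+1-a)$ and $N-2b-2=(\nu-1)(b+1-a)$. Hence, with $s=\frac{2\lambda}{|b+1-a|}r^{b+1-a}$, we get $r^{N-1-2a}dr\propto s^{\nu}ds$. Since $\partial_r=\frac{(b+1-a)s}{r}\partial_s$, also $|\partial_r u|^2r^{N-1-2b}dr\propto s^{\nu}|\partial_s u|^2ds$. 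So both $I_a$ and $D_b$ are integrals against $s\,d\mu_\nu$, exactly as in \eqref{eq4.4}--\eqref{eq4.5}. They are finite for every $\nu\ge1$, and there is no delicate case at this stage.

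\textbf{Truncation.} Your cutoff analysis tacitly assumes $b+1-a>0$. But $\mathcal A$ also contains $b+1-a<0$ with $N\le 2b+2$. There $s\to\infty$ as $r\to0^+$ and $s\to0$ as $r\to\infty$. So $u$ decays like $\exp(-c\,r^{-|b+1-a|})$ at the origin but tends to $\Phi(\sigma)p(0)$ (your polynomial $p$) at infinity. Your claims ``near infinity it vanishes by exponential decay'' and ``tends to zero when $N-2b-2>0$'' are then false, and the delicate end is $r=\infty$. A cutoff on $[j,2j]$ with $|\eta_j'|\le C/j$ costs about $j^{N-2b-2}$, which tends to zero only when $N<2b+2$. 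For $N=2b+2$ (i.e.\ $\nu=1$) you need a logarithmic cutoff at infinity.

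You can repair this by mirroring your argument, or, as the paper does, by cutting off in the variable $s$. Take $\chi_\varepsilon$ equal to $1$ on $[\varepsilon,\varepsilon^{-1}]$, supported in $(\varepsilon^2,\varepsilon^{-2})$, with $|\chi_\varepsilon'(s)|\le C/(s|\log\varepsilon|)$. In $s$ the profile always decays at $s=\infty$, and the derivative weight near $s=0$ is $s^{\nu}$. So the extra term $2\chi_\varepsilon' h_j$ costs at most $\frac{C_j}{|\log\varepsilon|^2}\int_{\varepsilon^2}^{\varepsilon}s^{\nu-2}ds$, which is $C_j/|\log\varepsilon|$ at $\nu=1$ and tends to zero in every case. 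This handles both signs of $b+1-a$ and the endpoint $\nu=1$ uniformly.
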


\begin{proof}
By Lemma~\ref{sharp:conjugation}, it suffices to prove the result
for $(a,b)\in\mathcal A$,
where $\gamma=0$ and $\nu\ge1$.
By the change of variables $s=\frac{2\lambda}{|b+1-a|} r^{b+1-a}$, we have
\begin{align}\label{eq4.4}
I_a(u)&=\frac{\Gamma(\nu)}{2\lambda }\left(\frac{2\lambda}{|b+1-a|}\right)^{-\nu}
\int_{\partial K}\int_0^\infty s|h|^2\,d\mu_\nu\,d\mu_K,\\
D_b(u)&=\frac{\lambda \Gamma(\nu)}{2}\left(\frac{2\lambda}{|b+1-a|}\right)^{-\nu}
\int_{\partial K}\int_0^\infty s|2\partial_sh-h|^2\,d\mu_\nu\,d\mu_K.\label{eq4.5}
\end{align}
By the finiteness of polynomial moments of the Gamma measure, we get $I_a(u)<+\infty$ and $D_b(u)<+\infty$.

By the Stone-Weierstrass theorem, the restrictions of polynomials on
$\mathbb R^N$ are uniformly dense in $C(\partial K)$.
Since $C(\partial K)$ is dense in $L^2(\partial K,d\mu_K)$ and $\mu_K(\partial K)<\infty$, uniform convergence implies
convergence in $L^2(\partial K,d\mu_K)$.
Therefore, for $i=0,1$, we may choose
$G_{i,j}\in C^\infty(\mathbb R^N)$ such that
\[
\Phi_{i,j}:=G_{i,j}|_{\partial K}
\longrightarrow \Phi_i
\quad\text{in }L^2(\partial K,d\mu_K)
\quad\text{as }j\to\infty.
\]
Let $h_j(s,\sigma)=\Phi_{0,j}(\sigma)+\Phi_{1,j}(\sigma)\bigl(s-\nu\bigr),$
$u_j(r,\sigma)=g_\lambda(r)h_j\left(\frac{2\lambda}{|b+1-a|} r^{b+1-a},\sigma\right)$.
Applying \eqref{eq4.4} and \eqref{eq4.5} to $u_j-u$, and using
the finiteness of the polynomial moments of the Gamma measure,
we obtain
\[
I_a(u_j-u)+D_b(u_j-u)
\le C\sum_{i=0}^1\|\Phi_{i,j}-\Phi_i\|_2^2\to 0.
\]
Thus $u_j\to u$ in the weighted energy norm.

For $0<\varepsilon<1$, choose a smooth cutoff $\chi_\varepsilon$
equal to one on $[\varepsilon,\varepsilon^{-1}]$, equal to zero
outside $(\varepsilon^2,\varepsilon^{-2})$, with values in $[0,1]$ and
\[
\bigl|\chi_\varepsilon'(s)\bigr|\le\frac{C}{s|\log(\varepsilon)|},
\]
for some constant $C>0$ independent of $\varepsilon$.
Replace $h_j$ by $\chi_\varepsilon h_j$.
In the derivative energy, the additional term is
$2\chi_\varepsilon'h_j$.
All other differences converge to zero as $\varepsilon\to0$
by the dominated convergence theorem.
For fixed $j$, there exists $C_j>0$ such that
the contribution of $2\chi_\varepsilon'h_j$ to the derivative
energy near zero is bounded by
\[
\frac{C_j}{|\log(\varepsilon)|^2}
\int_{\varepsilon^2}^{\varepsilon}s^{\nu-2}\,ds\rightarrow0\quad\text{as }\varepsilon\to0.
\]
At the endpoint $\nu=1+\frac{N-2b-2}{b+1-a}=1$, this bound is $\frac{C_j}{|\log(\varepsilon)|}$.
At infinity, the corresponding bound is
\[
\frac{C_j}{|\log(\varepsilon)|^2}
\int_{\varepsilon^{-1}}^\infty\bigl(1+s^2\bigr)s^{\nu-2}e^{-s}\,ds
\rightarrow0\quad\text{as }\varepsilon\to0.
\]

For fixed $j$ and $\varepsilon$, the cut-off function is $C^1$
and compactly supported in an annulus. Choose smooth functions
supported in a slightly larger annulus that converge to this
function in $W^{1,2}$. Since the weights are bounded on that
annulus and $|\mathcal R_K(f)|\le C_K|\nabla f|$, this convergence
also holds in the weighted energy norm.
By a diagonal argument, there exists a subsequence
$w_n\in C_c^\infty(\RN\setminus\{o\})$ such that
$w_n\rightarrow u$ in the weighted energy norm as $n\to\infty$.
\end{proof}

The next proposition gives the minimizing scales of  functions to the fixed-scale extremal subspace,
when the two angular
coefficients are orthogonal.

\begin{proposition}\label{prop:nearest-scales}
Let $\lambda>0$ and
$u=g_\lambda\left[\Phi_0+\Phi_1\left(\frac{2\lambda}{|b+1-a|} r^{b+1-a}-\nu\right)\right],$
where $\Phi_0,\Phi_1\in L^2(\partial K,d\mu_K)$ are not both zero and
$\langle\Phi_0,\Phi_1\rangle=0$.
If $\|\Phi_0\|_2^2\ge\nu \|\Phi_1\|_2^2$, the unique nearest scale is $\eta=\lambda$.
If $\|\Phi_0\|_2^2<\nu \|\Phi_1\|_2^2$, there are two nearest scales $\eta_\pm=\lambda\frac{1\mp z_*}{1\pm z_*}$, where
$z_*=\sqrt{\frac{\nu \|\Phi_1\|_2^2-\|\Phi_0\|_2^2}{\nu\bigl(\nu+1\bigr)\|\Phi_1\|_2^2}}$.
More explicitly,
\begin{equation}\label{sharp:two-mode-distance}
d_{K,a,b}(u,\mathcal M_{K,a,b})^2=\frac{\Gamma(\nu)}{|b+1-a|}\left(\frac{2\lambda}{|b+1-a|}\right)^{-\nu}\bigl(\|\Phi_0\|_2^2+\nu \|\Phi_1\|_2^2-Q_{\max}\bigr),
\end{equation}
where
\[Q_{\max}=
\begin{cases}
\|\Phi_0\|_2^2,&\|\Phi_0\|_2^2\ge\nu \|\Phi_1\|_2^2,\\[2pt]
\displaystyle\frac{\bigl(\|\Phi_0\|_2^2+\nu^2\|\Phi_1\|_2^2\bigr)^{\nu+1}}
{\nu^\nu\bigl(\nu+1\bigr)^{\nu+1}\|\Phi_1\|_2^{2\nu}},&\|\Phi_0\|_2^2<\nu \|\Phi_1\|_2^2.
\end{cases}
\]
\end{proposition}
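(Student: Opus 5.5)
The plan is to reduce to the region $\mathcal A$, pass to the Gamma variable, and turn the minimization over $(\eta,\Phi)$ into an explicit one-variable maximization. By Lemma~\ref{sharp:conjugation}, $T$ preserves $I_{a,b}$, $\nu$ and $b+1-a$, and maps extremal families onto extremal families. So it suffices to treat $(a,b)\in\mathcal A$, where $\gamma=0$ and $g_\eta(r)=e^{-\eta r^{\beta}/|\beta|}$ with $\beta=b+1-a$.

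\textbf{Step 1: fixed-scale projection.} Fix $\eta>0$. Minimizing $I_{a,b}(u-\Phi g_\eta)$ over $\Phi\in L^2(\partial K,d\mu_K)$ is, by Lemma~\ref{l5} (or simply fiberwise in $\sigma$), an orthogonal projection in $L^2(r^{N-a-b-2}dr)$ onto $g_\eta$ for each direction. Hence
\[
\inf_\Phi I_{a,b}(u-\Phi g_\eta)=I_{a,b}(u)-\int_{\partial K}\frac{\bigl(\int_0^\infty u\,g_\eta\, r^{N-a-b-2}dr\bigr)^2}{\int_0^\infty g_\eta^2 r^{N-a-b-2}dr}\,d\mu_K .
\]
Then $d_{K,a,b}(u,\mathcal M_{K,a,b})^2=I_{a,b}(u)-\sup_{\eta>0}(\text{projected mass})$.

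\textbf{Step 2: explicit computation.} Use $N-a-b-1=\nu\beta$ and substitute $s=\frac{2\lambda}{|\beta|}r^{\beta}$. Then
\[
r^{N-a-b-2}dr=\frac{1}{|\beta|}\Bigl(\frac{2\lambda}{|\beta|}\Bigr)^{-\nu}s^{\nu-1}ds,\qquad g_\lambda=e^{-s/2},\qquad g_\eta=e^{-ts/2},\quad t=\eta/\lambda .
\]
Write $A=\|\Phi_0\|_2^2$, $B=\|\Phi_1\|_2^2$, and let $c=\frac{1}{|\beta|}(\frac{2\lambda}{|\beta|})^{-\nu}$.

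\begin{itemize}
\item Since $\int (s-\nu)\,d\mu_\nu=0$, $\operatorname{Var}_{\mu_\nu}(s)=\nu$ and $\langle\Phi_0,\Phi_1\rangle=0$, we get $I_{a,b}(u)=c\,\Gamma(\nu)(A+\nu B)$.
\item Gamma integrals give
\[
\int_0^\infty e^{-\frac{(1+t)s}{2}}\bigl(\Phi_0+\Phi_1(s-\nu)\bigr)s^{\nu-1}ds=\Gamma(\nu)\Bigl(\tfrac{2}{1+t}\Bigr)^{\nu}\bigl(\Phi_0+\nu z\Phi_1\bigr),\qquad z=\frac{1-t}{1+t}\in(-1,1).
\]
\item The normalization is $\int_0^\infty e^{-ts}s^{\nu-1}ds=\Gamma(\nu)t^{-\nu}$.
\end{itemize}

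Since $\frac{4t}{(1+t)^2}=1-z^2$, and using orthogonality once more, the projected mass equals $c\,\Gamma(\nu)\,Q(z)$ with
\[
Q(z)=(1-z^2)^\nu\bigl(A+\nu^2 z^2 B\bigr).
\]
The map $\eta\mapsto z$ is a bijection of $(0,\infty)$ onto $(-1,1)$ with inverse $\eta=\lambda\frac{1-z}{1+z}$, and $z=0$ corresponds to $\eta=\lambda$. This gives \eqref{sharp:two-mode-distance} with $Q_{\max}=\sup_{|z|<1}Q(z)$.

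\textbf{Step 3: maximization.} Put $y=z^2\in[0,1)$ and $f(y)=(1-y)^\nu(A+\nu^2By)$. Then
\[
f'(y)=\nu(1-y)^{\nu-1}\bigl[\nu B-A-\nu(\nu+1)B\,y\bigr].
\]
\begin{itemize}
\item If $A\ge\nu B$ (this includes $B=0$), the bracket is $\le0$ for $y>0$ and strictly negative unless $A=\nu B$ and $y=0$. So $f$ is strictly decreasing, the unique maximizer is $y=0$, i.e.\ $\eta=\lambda$, and $Q_{\max}=A$.
\item If $A<\nu B$, then $B>0$. Here $f$ increases up to $y_*=\frac{\nu B-A}{\nu(\nu+1)B}$, which lies in $(0,1)$, and decreases afterwards, with $f(y)\to0$ as $y\to1$. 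Hence the maximizers are exactly $z=\pm z_*$, which give $\eta_\pm=\lambda\frac{1\mp z_*}{1\pm z_*}$.
\end{itemize}
In the second case, substituting $1-y_*=\frac{A+\nu^2B}{\nu(\nu+1)B}$ and $A+\nu^2By_*=\frac{A+\nu^2B}{\nu+1}$ yields the stated formula for $Q_{\max}$.

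The main obstacle is bookkeeping rather than analysis. One must keep the Jacobian constant consistent, so that the prefactor $\frac{\Gamma(\nu)}{|b+1-a|}(\frac{2\lambda}{|b+1-a|})^{-\nu}$ comes out correctly. One must also check that the sign of $\beta$ (the case $\beta<0$ in $\mathcal A$) does not affect the substitution; this is handled by using $|\beta|$ in the Jacobian. Finally, one must justify that the nearest scales are exactly the maximizers of $Q$, which follows from the bijection $\eta\leftrightarrow z$ in Step 2.
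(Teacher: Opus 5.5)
Your proposal is correct and follows essentially the same route as the paper. Both arguments use a fiberwise orthogonal projection onto $g_\eta$ at fixed scale and Gamma-moment integrals giving a projection coefficient proportional to $\Phi_0+\nu z\Phi_1$ with $z=\frac{\lambda-\eta}{\lambda+\eta}$, then maximize $Q(z)=(1-z^2)^\nu(\|\Phi_0\|_2^2+\nu^2z^2\|\Phi_1\|_2^2)$. The only cosmetic differences are that you first reduce to $\mathcal A$ via Lemma~\ref{sharp:conjugation}, whereas the paper computes directly with $g_\eta$ including the factor $r^\gamma$, and that you differentiate in $y=z^2$ rather than in $z$.
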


Clearly, if $\|\Phi_0\|_2^2<\nu \|\Phi_1\|_2^2$, then the product of two nearest scales is $\eta_+\eta_-=\lambda^2.$

\begin{proof}
For fixed $\eta>0$, the functions $\Phi(\sigma)g_\eta(r)$ form a
closed subspace of the weighted $L^2$ space.
The minimizing angular coefficient is given by orthogonal projection:
\begin{equation}\label{sharp:projection-definition}
\Phi_\eta(\sigma)=
\frac{\int_0^\infty
u(r,\sigma)g_\eta(r)r^{N-a-b-2}dr}
{\int_0^\infty
g_\eta(r)^2r^{N-a-b-2}dr}.
\end{equation}
Let
$
z=\frac{\lambda-\eta}{\lambda+\eta}\in(-1,1).
$
By the change of variable $t=\frac{(\lambda+\eta) r^{b+1-a}}{|b+1-a|}$,
it follows
\[
\int_0^\infty g_\lambda(r)g_\eta(r)r^{N-a-b-2}dr
=\frac{\Gamma(\nu)}{|b+1-a|}
\biggl(\frac{\lambda+\eta}{|b+1-a|}\biggr)^{-\nu}.
\]
Letting $s=\frac{2\lambda r^{b+1-a}}{|b+1-a|}=\frac{2\lambda }{\lambda+\eta}t$, we obtain from $\Gamma(\nu+1)=\nu\Gamma(\nu)$ that
\begin{align*}
\int_0^\infty s\,g_\lambda(r)g_\eta(r)r^{N-a-b-2}dr
=\frac{2\lambda\nu}{\lambda+\eta} \int_0^\infty g_\lambda(r)g_\eta(r)r^{N-a-b-2}dr.
\end{align*}
And we get
\[
\int_0^\infty
\bigl(s-\nu\bigr)g_\lambda(r)g_\eta(r)r^{N-a-b-2}dr
=
\nu z
\int_0^\infty
g_\lambda(r)g_\eta(r)r^{N-a-b-2}dr,
\]
where $z=\frac{\lambda-\eta}{\lambda+\eta}$.
Therefore, \eqref{sharp:projection-definition} yields
\begin{equation}\label{sharp:projection-coefficient}
\Phi_\eta=\biggl(\frac{2\eta}{\lambda+\eta}\biggr)^\nu
\bigl(\Phi_0+\nu z\Phi_1\bigr).
\end{equation}
Using the mean and variance of the Gamma measure, we have
\begin{align*}
I_{a,b}(u)&=\int_{\partial K}\int_0^\infty g_\lambda^2(\Phi_0^2+(\Phi_1(s-\nu ))^2)r^{N-a-b-2}drd\mu_K\\
&=\frac{\Gamma(\nu)}{|b+1-a|}\left(\frac{2\lambda}{|b+1-a|}\right)^{-\nu}(\|\Phi_0\|_2^2+\nu \|\Phi_1\|_2^2).
\end{align*}
Note that
\begin{align*}
&I_{a,b}\bigl(u-\Phi g_\eta\bigr)\\
&=\!I_{a,b}(u)
-2\int_{\partial K}\!\!\Phi\!\!\int_0^\infty\!\! g_\lambda g_\eta (\Phi_0+\Phi_1(s-\nu ))r^{N-a-b-2}drd\mu_K
+\int_{\partial K}\!\!\Phi^2\!\!\int_0^\infty\!\! g_\eta^2 r^{N-a-b-2}drd\mu_K \\
&=\!I_{a,b}(u)
-2\langle\Phi,\Phi_0+\nu z\Phi_1\rangle\frac{\Gamma(\nu)}{|b+1-a|}\left(\frac{\lambda+\eta}{|b+1-a|}\right)^{-\nu}\!
+\frac{\Gamma(\nu)}{|b+1-a|}\left(\frac{2\eta}{|b+1-a|}\right)^{-\nu}\!\|\Phi\|_2^2.
\end{align*}
Therefore,
\begin{align*}
&\inf_{\Phi\in L^2(\partial K,d\mu_K)}
I_{a,b}\bigl(u-\Phi g_\eta\bigr)\\
&=\!
\frac{\Gamma(\nu)}{|b\!+\!1\!-\!a|}\bigg(\frac{2\lambda}{|b\!+\!1\!-\!a|}\bigg)^{-\nu}
\!\Bigl(
\|\Phi_0\|_2^2+\nu \|\Phi_1\|_2^2\!-\!\bigl(1\!-\!z^2\bigr)^\nu
\bigl(\|\Phi_0\|_2^2\!+\!2\nu z\langle\Phi_0,\Phi_1\rangle\!+\!\nu^2z^2\|\Phi_1\|_2^2\bigr)
\Bigr).
\end{align*}

Under the assumption $\langle\Phi_0,\Phi_1\rangle=0$, minimization over $\eta>0$ is equivalent
to maximizing
\[
Q(z)=\bigl(1-z^2\bigr)^\nu\bigl(\|\Phi_0\|_2^2+\nu^2\|\Phi_1\|_2^2z^2\bigr),\quad -1<z<1.
\]
If $\|\Phi_1\|_2^2=0$, the unique maximum is at $z=0$.
Otherwise,
\[
Q'(z)=2\nu z\bigl(1-z^2\bigr)^{\nu-1}
\bigl[\nu \|\Phi_1\|_2^2-\|\Phi_0\|_2^2-\nu\bigl(\nu+1\bigr)\|\Phi_1\|_2^2z^2\bigr].
\]
If $\|\Phi_0\|_2^2\ge\nu \|\Phi_1\|_2^2$, then $Q'(z)<0$ for $0<z<1$.
Since $Q$ is even, its unique maximum is at zero.
This also covers the threshold case $\|\Phi_0\|_2^2=\nu \|\Phi_1\|_2^2$.
If $\|\Phi_0\|_2^2<\nu \|\Phi_1\|_2^2$, then $Q'(z)>0$ for $0<z<z_*$ and
$Q'(z)<0$ for $z_*<z<1$. Thus, by evenness, the two maxima
are at $z=\pm z_*$, where $0<z_*<1$.
Also $Q(z)\rightarrow0$ as $z\to\pm1$.
Substituting the maximizing values of $z$ gives the stated formulas for the nearest scales and $Q_{\max}$, as desired.
\end{proof}

\begin{remark}\label{rem:quartic-threshold}
When $\|\Phi_0\|_2^2=\nu \|\Phi_1\|_2^2>0$, the nearest scale remains unique, although the coefficient of $z^2$ in the expansion of the normalized squared projection norm is zero:
\begin{equation}\label{sharp:quartic-threshold}
\frac{Q(z)}{\|\Phi_0\|_2^2}=\bigl(1-z^2\bigr)^\nu\bigl(1+\nu z^2\bigr)
=1-\frac{\nu\bigl(\nu+1\bigr)}2z^4+O(z^6),\quad\text{as }z\to0.
\end{equation}
This expansion shows that, at the threshold, the squared distance
to the fixed-scale extremal subspace increases quartically as the
scale moves away from its minimizing value.
The quartic behavior here concerns minimization over the scale
for a fixed function, whereas \cite{FP24} establishes a quartic
lower bound for the deficit in terms of the distance to the
set of optimizers.
\end{remark}

We now give the proof of Theorem~\ref{thm:unified}.

\begin{proof}[Proof of Theorem~\ref{thm:unified}]
By Lemma~\ref{sharp:conjugation}, it suffices to prove the theorem
in $\mathcal A$.
The map $T$ preserves the energy, the weighted norm, the deficit,
the distance and the balance parameter $\lambda_2=\sqrt{\frac{D_b(u)}{I_a(u)}}$, and maps the extremal functions in $\mathcal A$ to those in $\mathcal B$.

Let $u\ne0$ be in $\mathcal C^2_{K,a,b}$.
Then $I_a(u)>0$.
We also have $D_b(u)>0$. Otherwise, $u$ is constant along almost every radial ray,
and the finiteness of $I_a(u)$ guarantees the constant to vanish, since
$\int_0^\infty r^{N-1-2a}dr=\infty$.
Thus $\lambda_2=\lambda_2(u)>0$.
When $u\in C_c^\infty(\mathbb R^N\setminus\{o\})\setminus\{0\}$, it follows from the integration by parts that
\begin{equation}\label{eq:weighted-cross-identity}
2\int_{\RN}u\mathcal R_K(u)r^{-a-b}dx
=-\bigl(N-a-b-1\bigr)I_{a,b}(u).
\end{equation}
Then we obtain
\begin{equation}\label{sharp:remainder}
\delta_{K,a,b}(u)=\frac1{2\lambda_2}
\int_{\RN}\bigl|\mathcal R_K(u)+\operatorname{sgn}(b+1-a)
\lambda_2 r^{b-a}u\bigr|^2r^{-2b}dx.
\end{equation}
By the approximation argument, \eqref{sharp:embedding}, and
continuity of $\lambda_2$ at nonzero functions,
we can extend \eqref{sharp:remainder}
to every nonzero $u\in \mathcal C^2_{K,a,b}$.

Let $u(r,\sigma)=e^{-\frac{s}{2}}h(s,\sigma)$ with
$s=\frac{2\lambda}{|b+1-a|} r^{b+1-a}$ and $\lambda=\lambda_2$.
By the change of variables, it follows from the definition of $I_{a,b}$ and
\eqref{sharp:remainder} that
\begin{align}
I_{a,b}(u)&=\frac{\Gamma(\nu)}{|b+1-a|}\left(\frac{2\lambda}{|b+1-a|}\right)^{-\nu}\int_{\partial K}\int_0^\infty
|h|^2d\mu_\nu d\mu_K,\label{eq4.13}\\
\delta_{K,a,b}(u)&=\Gamma(\nu)\left(\frac{2\lambda}{|b+1-a|}\right)^{-\nu}
\int_{\partial K}\int_0^\infty
s|\partial_sh|^2d\mu_\nu d\mu_K.\label{eq4.14}
\end{align}
The finiteness of these integrals follows from
\eqref{sharp:embedding} and \eqref{sharp:remainder}.
Then we have $h(\cdot,\sigma)\in \mathcal D_\nu=\big\{
h\in W_{\mathrm{loc}}^{1,2}(0,\infty):\int_0^\infty
\big(|h(s)|^2+s|h'(s)|^2\big)
s^{\nu-1}e^{-s}ds<\infty\big\}$
for almost every $\sigma$.

For brevity, we write $d_\lambda(u)
=\inf_{\Phi\in L^2(\partial K,d\mu_K)}
I_{a,b}\bigl(u-\Phi g_\lambda\bigr)^{\frac{1}{2}}$
when $\lambda=\lambda_2$.
By Lemma~\ref{l5}, we have
\begin{equation}\label{eq4.15}
d_\lambda(u)^2=\frac{\Gamma(\nu)}{|b+1-a|}\left(\frac{2\lambda}{|b+1-a|}\right)^{-\nu}\int_{\partial K}
\inf_{q\in\mathbb R}\int_0^\infty
|h(s,\sigma)-q|^2d\mu_\nu(s)d\mu_K(\sigma).
\end{equation}
This, together with Lemma~\ref{l2}, implies that
\begin{equation}\label{sharp:chain}
\delta_{K,a,b}(u)\ge|b+1-a|d_\lambda(u)^2
\ge|b+1-a|d_{K,a,b}(u,\mathcal M_{K,a,b})^2.
\end{equation}

We next characterize the equality conditions.
Equality in the first inequality of \eqref{sharp:chain}
holds if and only if, for almost every $\sigma$,
\[h(s,\sigma)=\Phi_0(\sigma)+\Phi_1(\sigma)\bigl(s-\nu\bigr).\]
Since the functions
$1$ and $s-\nu$ are orthogonal in $L^2(\mu_\nu)$ with squared
norms $1$ and $\nu$, and
\eqref{eq4.13} is finite, we have $\Phi_0,\Phi_1 \in L^2(d\mu_K)$.
By the fact that
$\int s\,d\mu_\nu=\nu$ and $\int s\bigl(s-\nu\bigr)d\mu_\nu=\nu,$
one has from \eqref{eq4.4} and \eqref{eq4.5} that
\begin{equation}\label{sharp:balance-orthogonality}
D_b(u)-\lambda^2I_a(u)
=-2\lambda \nu \Gamma(\nu) \left(\frac{2\lambda}{|b+1-a|}\right)^{-\nu}\langle\Phi_0,\Phi_1\rangle
\end{equation}
for every $u=e^{-\frac{s}{2}}h(s,\sigma)$ with $h(s,\sigma)=\Phi_0(\sigma)+\Phi_1(\sigma)\bigl(s-\nu\bigr)$.
From $\lambda=\lambda_2(u)$, the left-hand side vanishes, and then
$\langle\Phi_0,\Phi_1\rangle=0$.
Therefore, by Proposition~\ref{prop:nearest-scales}, $\lambda$ is a nearest
scale if and only if
\[
\|\Phi_0\|_2^2\ge\nu\|\Phi_1\|_2^2.
\]
This is just the equality case of the second inequality of \eqref{sharp:chain}. The proof of sufficiency is straightforward, and we omit the argument.

We now prove sharpness on $C_c^\infty(\RN\setminus\{o\})$.
We may choose a nonzero bounded $\Psi\in L^2(d\mu_K)$ with
$\int\Psi d\mu_K=0$, since $\mu_K$ is not a point mass for
$N\ge2$.
Take $\Phi_0=1$ and $\Phi_1=\varepsilon\Psi$, with
$\langle\Phi_0,\Phi_1\rangle=0$ and $\varepsilon>0$ small enough such that
$\mu_K(\partial K)\ge\nu\varepsilon^2\|\Psi\|_2^2$.
For $u(r,\sigma)=e^{-\frac{s}{2}}(1+\varepsilon\Psi(s-\nu))$ with
$s=\frac{2\lambda}{|b+1-a|} r^{b+1-a}$ for a given $\lambda>0$,
it follows from \eqref{eq4.15} and the equality conditions that
\begin{align*}
d_{K,a,b}(u,\mathcal M_{K,a,b})^2
&=\frac{\Gamma(\nu)}{|b+1-a|}\left(\frac{2\lambda}{|b+1-a|}\right)^{-\nu}\nu\varepsilon^2\|\Psi\|_2^2>0,\\
\delta_{K,a,b}(u)&=|b+1-a|d_{K,a,b}(u,\mathcal M_{K,a,b})^2.
\end{align*}
This shows the sharpness on $\mathcal C^2_{K,a,b}$.
By Lemma~\ref{sharp:density}, choose compactly supported smooth functions
$u_j\rightarrow u$
in the energy norm as $j\to\infty$.
Energy convergence and the continuous embedding imply that
$$\delta_{K,a,b}(u_j)\to\delta_{K,a,b}(u).$$
By \eqref{sharp:embedding}, one also has
$I_{a,b}(u_j-u)\rightarrow0$ as $j\to\infty.$
Since distance is Lipschitz with constant $1$ in
$L^2(\RN,r^{-a-b-1}dx)$, one has
\[
d_{K,a,b}(u_j,\mathcal M_{K,a,b})
\rightarrow d_{K,a,b}(u,\mathcal M_{K,a,b})
\quad\text{as }j\to\infty.
\]
Thus there exists $u_j\in C_c^\infty(\RN\setminus\{o\})$
such that the corresponding ratios converge to $|b+1-a|$ as $j\to\infty$.
Therefore, the constant $|b+1-a|$ is sharp on
$C_c^\infty(\RN\setminus\{o\})$.

Assume that $u\in C_c^\infty(\RN\setminus\{o\})$ attains equality.
Since $u$ has compact support, $u(r,\sigma)=0$ for all
sufficiently large $r$.
Since $g_\lambda(r)>0$ and $\bigl(b+1-a\bigr)\ne0$,
it follows from \eqref{eq:unified-equality} that
$\Phi_0=\Phi_1=0$ almost everywhere,
and $u=0$.
\end{proof}

\subsection{Sharp $L^2$ Heisenberg stability}

We now establish the sharp stability estimates
in our anisotropic radial setting.
We refer to \cite{LLLS} for the stability of the Heisenberg uncertainty principle on half-spaces and orthants.

\begin{theorem}\label{thm:H-L2}
Let $K\in\Ksc$.
\begin{enumerate}[label=\textup{(\roman*)}]
\item
For every $u\in\mathcal H_K$,
\begin{equation}\label{eHnonsharp}
\delta_{K,H,non}(u)
\geq 4 \inf_{v\in\mathcal M_{K,H,non}}\|u-v\|^2_{L^2(\mathbb R^N)},
\end{equation}
with equality if and only if
\begin{equation}\label{sharp:Hnon-equality}
u(r,\sigma)=e^{-\frac{r^2}{2}}
\biggl[\Phi_0(\sigma)+\Phi_1(\sigma)\biggl(r^2-\frac N2\biggr)\biggr],
\quad \Phi_0,\Phi_1\in L^2(\partial K,d\mu_K).
\end{equation}
The constant $4$ is sharp, including on
$C_c^\infty(\RN\setminus\{o\})$.
If equality holds for $u\in C_c^\infty(\RN\setminus\{o\})$, then $u=0$.
\item
For every $u\in\mathcal H_K\setminus\{0\},$
\begin{equation}\label{eHsharp}
\delta_{K,H}(u)
\geq 2 \inf_{v\in\mathcal M_{K,H}}\|u-v\|^2_{L^2(\mathbb R^N)},
\end{equation}
with equality if and only if
\[
u(r,\sigma)=e^{-\frac{\lambda r^2}{2}}
\bigg[\Phi_0(\sigma)+\Phi_1(\sigma)\bigg(\lambda r^2-\frac N2\bigg)\bigg],
\]
where $\lambda=\lambda_H(u):=
\bigg(\frac{\int_{\RN}|\mathcal R_K(u)|^2dx}{\int_{\RN}r^2|u|^2dx}\bigg)^{\frac12},$
$\Phi_0,\Phi_1\in L^2(\partial K,d\mu_K)$
are not both zero and satisfy
$\langle\Phi_0,\Phi_1\rangle=0$ and $\|\Phi_0\|_2^2\ge\frac N2\|\Phi_1\|_2^2.$

The stability constant $2$ is sharp, including on $C_c^\infty(\RN\setminus\{o\})$.
If equality holds for $u\in C_c^\infty(\RN\setminus\{o\})$, then $u=0$.
\end{enumerate}
\end{theorem}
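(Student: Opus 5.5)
For (ii), the plan is to specialize Theorem~\ref{thm:unified} to $(a,b)=(-1,0)$. Here $b+1-a=2$, $N-2b-2=N-2\ge0$, so the pair lies in $\mathcal A$ with $\gamma=0$ and $\nu=N/2$. The three functionals reduce to
\[
I_{-1}(u)=\int r^2|u|^2dx,\qquad D_0(u)=\int|\mathcal R_K(u)|^2dx,\qquad I_{-1,0}(u)=\int|u|^2dx.
\]
Hence $\nu|b+1-a|/2=N/2$, $\delta_{K,-1,0}=\delta_{K,H}$ and $d_{K,-1,0}(u,\mathcal M_{K,-1,0})^2=\inf_{v\in\mathcal M_{K,H}}\|u-v\|_{L^2}^2$. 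Inequality \eqref{eHsharp} with constant $|b+1-a|=2$ is then exactly \eqref{eq:unified-stability}. In the equality case, $g_\lambda(r)=e^{-\lambda r^2/2}$ and $s=\lambda r^2$, so \eqref{eq:unified-equality} becomes the stated form with $\lambda=\lambda_H(u)=\lambda_2(u)$, and the norm condition becomes $\|\Phi_0\|_2^2\ge \frac N2\|\Phi_1\|_2^2$. Sharpness on $C_c^\infty(\RN\setminus\{o\})$ and the absence of compactly supported extremals are also inherited directly.

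For (i), I would not invoke the theorem but rerun its proof at the fixed scale $\lambda=1$. For $u\in C_c^\infty(\RN\setminus\{o\})$, the cross identity \eqref{eq:weighted-cross-identity} gives $2\int u\,\mathcal R_K(u)\,r\,dx=-N\int u^2dx$. Expanding the square then yields
\[
\delta_{K,H,non}(u)=\int_{\RN}\bigl|\mathcal R_K(u)+ru\bigr|^2dx .
\]
This identity extends to $\mathcal H_K$ by density and the embedding \eqref{sharp:embedding}. Next I write $u=e^{-s/2}h(s,\sigma)$ with $s=r^2$. This is the $\lambda=1$ case of the change of variables behind \eqref{eq4.13}--\eqref{eq4.14}, with $\mathcal R_K(u)+ru=2re^{-s/2}\partial_sh$. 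It should give
\[
\|u\|_2^2=\tfrac12\Gamma(\nu)\int\!\!\int|h|^2d\mu_\nu d\mu_K,\qquad \delta_{K,H,non}(u)=2\Gamma(\nu)\int\!\!\int s|\partial_sh|^2d\mu_\nu d\mu_K ,
\]
with $\nu=N/2$. Here the constants are the $\lambda=1$, $|b+1-a|=2$ versions of \eqref{eq4.13}--\eqref{eq4.14}, but since the remainder now carries weight $1$ instead of $1/(2\lambda_2)$, the deficit picks up an extra factor $2$.

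Lemma~\ref{l5} with $v=e^{-r^2/2}$ turns $\inf_{v\in\mathcal M_{K,H,non}}\|u-v\|_2^2$ into $\frac12\Gamma(\nu)\int_{\partial K}\inf_q\int|h-q|^2d\mu_\nu\,d\mu_K$. Lemma~\ref{l2} with $P_{\nu,2}=1$ then gives \eqref{eHnonsharp} with constant $4$. Equality in Lemma~\ref{l2} for almost every $\sigma$ means $h=\Phi_0+\Phi_1(s-\nu)$. Unlike in (ii), no orthogonality or nearest-scale condition is needed here, because the scale is fixed. The coefficients $\Phi_0,\Phi_1$ lie in $L^2$ by the orthogonality of $1$ and $s-\nu$, and the converse direction holds because every such $u$ belongs to $\mathcal H_K$ by Lemma~\ref{sharp:density}.

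Sharpness follows by taking $\Phi_1\neq0$, which gives positive distance with equality, and then approximating in energy by Lemma~\ref{sharp:density}. The distance is $1$-Lipschitz in $L^2$ and the deficit is continuous in energy. As before, a compactly supported extremal forces $\Phi_0=\Phi_1=0$.

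The main point to check is the bookkeeping of the constants $2$ versus $4$ in the change of variables, which I will verify directly on the model function $u=e^{-s/2}(s-\nu)$. This requires $\int s\,(s-\nu)^2\,d\mu_\nu$-type moments. The rest is routine: justifying the extension of the remainder identity to $\mathcal H_K$, and noting that for $N=2$ ($\nu=1$) the density argument still goes through.
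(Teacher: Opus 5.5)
Your proposal is correct and follows essentially the paper's route. Part (ii) is read off from Theorem~\ref{thm:unified} at $(a,b)=(-1,0)$. Part (i) reruns the fixed-scale argument: the identity $\delta_{K,H,non}(u)=\int_{\RN}|\mathcal R_K(u)+ru|^2dx$, the substitution $s=r^2$ with $h=e^{s/2}u$, Lemma~\ref{l5}, and Lemma~\ref{l2}, followed by the same density argument for sharpness. Your constant bookkeeping, $2\Gamma(\nu)$ for the deficit versus $\frac12\Gamma(\nu)$ for the distance, matches the paper's and yields the factor $4$.
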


We note that there is no orthogonality or norm condition on $\Phi_0,\Phi_1$ in \eqref{sharp:Hnon-equality}.

\begin{proof}
We first prove part \textup{(i)}.
By \eqref{eq:weighted-cross-identity} with $(a,b)=(-1,0)$
and the definition of $\delta_{K,H,non}$, one has
\[
\delta_{K,H,non}(u)=\int_{\RN}\bigl|\mathcal R_K(u)+ru\bigr|^2dx.
\]
Let $\nu_H=\frac N2$, $s=r^2$, and
$h(s,\sigma)=e^{\frac{r^2}{2}}u(r,\sigma)$.
By the change of variables $s=r^2$ and Lemma \ref{l5}, we deduce
\begin{align*}
\delta_{K,H,non}(u)
&=2\Gamma(\nu_H)
\int_{\partial K}\int_0^\infty
s|\partial_s h|^2d\mu_{\nu_H}d\mu_K,
\\
\inf_{v\in\mathcal M_{K,H,non}}\|u-v\|^2_{L^2(\mathbb R^N)}
&=\frac{\Gamma(\nu_H)}2\int_{\partial K}\inf_{q\in\mathbb R}
\int_0^\infty|h-q|^2d\mu_{\nu_H}d\mu_K.
\end{align*}
By these identities and the embedding
$\mathcal H_K\hookrightarrow L^2(\RN)$,
it follows from Lemma~\ref{l2} for $\mu_K$-almost every $\sigma$ that
\[
\delta_{K,H,non}(u)
\geq 4 \inf_{v\in\mathcal M_{K,H,non}}\|u-v\|^2_{L^2(\mathbb R^N)}.
\]
The equality characterization in Lemma~\ref{l2} shows that
equality holds if and only if
\[
h(s,\sigma)
=\Phi_0(\sigma)+\Phi_1(\sigma)(s-\nu_H),
\]
which is precisely \eqref{sharp:Hnon-equality}.

By Lemma~\ref{sharp:density} with $(a,b)=(-1,0)$
and $\lambda=1$, every function in
\eqref{sharp:Hnon-equality} belongs to $\mathcal H_K$.
The function $u(r,\sigma)=r^2e^{-\frac{r^2}{2}}$
attains equality and has positive distance from
$\mathcal M_{K,H,non}$.
Hence, the constant $4$ is sharp on $\mathcal H_K$.
To prove sharpness on $C^\infty_c(\mathbb R^N\setminus \{o\})$, choose
$u_j\in C_c^\infty(\RN\setminus\{o\})$
converging to $u$ in $\mathcal H_K$.
The embedding $\mathcal H_K\hookrightarrow L^2(\RN)$
and continuity of distance imply that
\[
\frac{\delta_{K,H,non}(u_j)}
{\inf_{v\in\mathcal M_{K,H,non}}\|u_j-v\|^2_{L^2(\mathbb R^N)}}
\rightarrow 4, \quad\text{as }j\to\infty.
\]
Note that the denominators are positive for sufficiently large $j$,
since the limiting distance is positive.
Moreover, if $u\in C^\infty_c(\mathbb R^N\setminus \{o\})$ attains equality,
then \eqref{sharp:Hnon-equality} and compact support force
$\Phi_0=\Phi_1=0$ almost everywhere.
Thus only zero attains equality on the test class.

For part \textup{(ii)}, \eqref{eHsharp} with sharp constant $2$ is an immediate consequence of Theorem~\ref{thm:unified} with $(a,b)=(-1,0)$.
The equality characterization with $\lambda=\lambda_H(u)$ follows from
\eqref{eq:unified-equality} and
\eqref{eq:unified-conditions}.
The sharpness on the test class and nonattainment by nonzero compactly supported smooth functions also follow from Theorem~\ref{thm:unified}.
\end{proof}

\begin{remark}
Let $K$ be the Euclidean unit ball, $\lambda>0$, and
$0<|\varepsilon|\le\sqrt 2$. Then
\[
u_\varepsilon(x)=e^{-\frac{\lambda|x|^2}{2}}
\bigg[1+\varepsilon\frac{x_1}{|x|}
\bigg(\lambda|x|^2-\frac N2\bigg)\bigg]
\]
belongs to $\mathcal H_K$ and is an equality case in
\eqref{eHsharp}. Indeed, the angular coefficients are
$\Phi_0=1$ and $\Phi_1=\varepsilon \sigma\cdot e_1$,
where $e_1$ denotes the first standard basis vector.
Then we have the identities
$\int_{\mathbb S^{N-1}}\sigma\cdot e_1\,d\sigma=0$ and
$\int_{\mathbb S^{N-1}}|\sigma\cdot e_1|^2\,d\sigma
=\frac{|\mathbb S^{N-1}|}{N}$.
As $|\varepsilon|\le\sqrt2$, Proposition~\ref{prop:nearest-scales}
shows that $\lambda$ is a nearest scale. For each $\sigma$, one has
$q=\int h\,d\mu_{N/2}=1$ in \eqref{eq4.15}.
Therefore, \eqref{eq4.14} and \eqref{eq4.15} give
\[
\inf_{v\in\mathcal M_{K,H}}\|u_\varepsilon-v\|^2_{L^2(\mathbb R^N)}
=\frac{\varepsilon^2}{2}\Bigl(\frac\pi\lambda\Bigr)^{\frac{N}{2}},
\quad
\delta_{K,H}(u_\varepsilon)
=\varepsilon^2\Bigl(\frac\pi\lambda\Bigr)^{\frac{N}{2}}>0.
\]
For $|\varepsilon|>\sqrt 2$, the same family has the two nearest
scales described in Proposition~\ref{prop:nearest-scales}.

We note that nonconstant angular coefficients can also yield equality in
\eqref{eHsharp}.
Indeed, if both angular coefficients in the scale-invariant
equality formula are constants, their orthogonality and the norm condition imply $\Phi_1=0$.
\end{remark}

\section{The \texorpdfstring{$L^p$}{Lp} stability of anisotropic radial inequalities}\label{s-lp}

In this section, we prove Theorems~\ref{tHcompletion} and~\ref{tA}.
We begin with the stability of the $L^p$ CKN inequalities in the parameter region $\mathcal A_p$, and then treat the $L^p$ Heisenberg stability without the parameter restriction.
In both cases, we first establish the estimates for
compactly supported smooth functions and then extend them
to the corresponding completion spaces.
The preliminary inequalities hold for $1<p<\infty$,
while the stability estimates require $p\ge2$.
Throughout, $p'=\frac{p}{p-1}$ and $\nu_H=\frac{N}{p'}$.

\subsection{CKN type inequalities in the region \texorpdfstring{$\mathcal A_p$}{Ap}}\label{s4}\label{s4Ap}

We now establish weighted $L^p$ stability
for the anisotropic radial CKN inequalities
in the region $\mathcal A_p$.
We refer the reader to \cite{LLLSCKN} for the stability of
sharp $L^2$-CKN inequalities by using
related weighted Poincar\'e inequalities on half-spaces and orthants.

Recall the parameter region $\mathcal A_p$ and
the constant $\nu_p$ defined in Section~\ref{s1}.
We first establish the following one-dimensional stability estimate.

\begin{lemma}\label{l3}
Let $1< p<\infty$, $(a,b)\in\mathcal A_p$.
Then, for $f\in C_c^\infty(0,\infty)$,
\begin{align*}
&\int_0^\infty
\big|f'(r)+\operatorname{sgn}\bigl(b+1-a\bigr)\lambda r^{b-a}f(r)\big|^p
r^{N-1-bp}dr\\
&\geq
\frac{(p\lambda)^{p-1}|b+1-a|}{P_{\nu_p,p}}
\inf_{c\in\mathbb R}\int_0^\infty\bigl|f(r)-cF_\lambda(r)\bigr|^pr^{N-(p-1)a-b-2}dr.
\end{align*}
where $F_\lambda(r)=\exp\left(-\frac{\lambda}{|b+1-a|}r^{b+1-a}\right)\ \mathrm{for}\ \lambda>0.$
\end{lemma}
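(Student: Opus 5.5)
Write $\beta=b+1-a$, $\epsilon=\operatorname{sgn}(\beta)$, and factor out the extremal: $f=F_\lambda h$. Then $h\in C_c^\infty(0,\infty)$ as well, since $F_\lambda$ is smooth and positive on $(0,\infty)$. A direct computation gives $F_\lambda'=-\epsilon\lambda r^{b-a}F_\lambda$, and hence
\[
f'+\epsilon\lambda r^{b-a}f=F_\lambda h'.
\]
So the left-hand side equals $\int_0^\infty |h'(r)|^pF_\lambda(r)^pr^{N-1-bp}\,dr$. On the right-hand side, $|f-cF_\lambda|^p=F_\lambda^p|h-c|^p$.

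Next change variables with $s=\frac{p\lambda}{|\beta|}r^{\beta}$, so that $F_\lambda^p=e^{-s}$. Since $r=\bigl(\frac{|\beta|s}{p\lambda}\bigr)^{1/\beta}$, we have $dr=\frac{r}{|\beta|s}\,ds$ with orientation absorbed by the absolute value, which handles both signs of $\beta$. For the right-hand weight, $r^{N-(p-1)a-b-2}\,dr=\frac{1}{|\beta|s}\,r^{N-1-(p-1)a-b}\,ds$. By the definition of $\nu_p$ we have $r^{N-1-(p-1)a-b}=r^{\nu_p\beta}=C_0 s^{\nu_p}$ with $C_0=\bigl(\frac{|\beta|}{p\lambda}\bigr)^{\nu_p}$. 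This gives the Gamma weight $\frac{C_0}{|\beta|}s^{\nu_p-1}e^{-s}\,ds$.

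For the left-hand side, $h'(r)=\partial_sh\cdot\frac{ds}{dr}=\partial_sh\cdot p\lambda\epsilon r^{\beta-1}$. Therefore
\[
|h'|^pr^{N-1-bp}\,dr=(p\lambda)^p|\partial_sh|^p\,r^{p(\beta-1)+N-1-bp}\,\frac{r}{|\beta|s}\,ds .
\]
The exponent simplifies: $p(b-a)+N-bp=N-pa=\beta(\nu_p+1)$, because $\beta\nu_p=N-1-(p-1)a-b$. So $r^{N-pa}=C_0 s^{\nu_p}\cdot\frac{|\beta|s}{p\lambda}$, and the left-hand side becomes
\[
(p\lambda)^{p-1}C_0\int_0^\infty s\,|\partial_sh|^ps^{\nu_p-1}e^{-s}\,ds .
\]

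Now apply Lemma~\ref{l2} with $\nu=\nu_p$, which is allowed since $\nu_p\ge p-1>0$ on $\mathcal A_p$. It bounds $\inf_c\int|h-c|^ps^{\nu_p-1}e^{-s}\,ds$ by $P_{\nu_p,p}$ times the energy above. Multiplying the two sides through by the constants yields exactly the factor $\frac{(p\lambda)^{p-1}|\beta|}{P_{\nu_p,p}}$ in the statement.

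The step I expect to need the most care is the exponent bookkeeping. Specifically, I must check that both weights reduce to the same Gamma density $s^{\nu_p-1}e^{-s}$, with the extra factor $s$ only in the derivative term, and that the constants $C_0$ cancel to leave precisely $(p\lambda)^{p-1}|\beta|$. I must also check that $h\in C_c^\infty(0,\infty)$ in the $s$ variable, so that Lemma~\ref{l2} applies directly; this holds because $r\mapsto s$ is a smooth diffeomorphism of $(0,\infty)$ for either sign of $\beta$.
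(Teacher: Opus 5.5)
Your proposal is correct and follows essentially the same route as the paper. You factor $f=F_\lambda h$ using $F_\lambda'+\operatorname{sgn}(b+1-a)\lambda r^{b-a}F_\lambda=0$, substitute $s=\frac{p\lambda}{|b+1-a|}r^{b+1-a}$ so that both sides become Gamma-weighted integrals with parameter $\nu_p\ge p-1>0$, and then apply Lemma~\ref{l2}. Your exponent bookkeeping, including $N-pa=(b+1-a)(\nu_p+1)$ and the cancellation of $C_0$, correctly yields the constant $\frac{(p\lambda)^{p-1}|b+1-a|}{P_{\nu_p,p}}$, and it is more explicit than the paper's own write-up.
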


\begin{proof}
We define $g$ by $f=F_\lambda g$.
Since $F_\lambda'+\operatorname{sgn}(b+1-a)\lambda r^{b-a}F_\lambda=0$,
\begin{equation}\label{eq5.5}
f'+\operatorname{sgn}(b+1-a)\lambda r^{b-a}f=F_\lambda g',
\quad f-cF_\lambda=F_\lambda\bigl(g-c\bigr).
\end{equation}
Let $s=\frac{p\lambda}{|b+1-a|}r^{b+1-a},$ and $h(s)=g(r)$.
We have $F_\lambda^p=e^{-s}$, $h\in C_c^\infty(0,\infty)$.
By the change of variables $s=\frac{p\lambda}{|b+1-a|}r^{b+1-a}$,
it implies
\begin{align*}
\int_0^\infty|g'|^pF_\lambda^p r^{N-1-bp}dr
&=(p\lambda)^{p-1}
\biggl(\frac{p\lambda}{|b+1-a|}\biggr)^{-\nu_p}
\int_0^\infty s|h'|^p s^{\nu_p-1}e^{-s}ds,\\
\int_0^\infty|g(r)-c|^pF_\lambda^p r^{N-(p-1)a-b-2}dr
&=\frac1{|b+1-a|}
\biggl(\frac{p\lambda}{|b+1-a|}\biggr)^{-\nu_p}
\int_0^\infty|h(s)-c|^p s^{\nu_p-1}e^{-s}ds.
\end{align*}
Since $(a,b)\in\mathcal A_p$, we have $\nu_p\ge p-1>0$.
From Lemma~\ref{l2} and \eqref{eq5.5}, it shows the desired result.
\end{proof}

For a nonzero test function $u$, define
\[
\lambda_p(u)=
\biggl(\frac{\int_{\mathbb R^N}|\mathcal R_K(u)|^p \|x\|_K^{-bp}dx}{\int_{\mathbb R^N}|u|^p \|x\|_K^{-ap}dx}
\biggr)^{\frac{1}{p}}.
\]

\begin{theorem}\label{tAcompletion}
Let $2\le p<\infty$, $K\in\Ksc$ and
$(a,b)\in\mathcal A_p$. Then, for every
$u\in C_c^\infty(\mathbb R^N\setminus\{o\})\setminus\{0\},$
one has
\begin{align}\label{eAcompletion}
\delta_{p,K,\mathcal A_p}(u)
\geq\frac{\kappa_p p^{p-2}|b+1-a|}{P_{\nu_p,p}}
d_{p,K,\mathcal A_p}
\big(u,\mathcal M_{p,K,\mathcal A_p}\big)^p.
\end{align}
\end{theorem}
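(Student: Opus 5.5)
The plan is to prove a pointwise remainder identity built from the Bregman divergence $R_p$. I will then bound it below by Lemma~\ref{lUC}, apply the one-dimensional estimate of Lemma~\ref{l3} along each ray, and pass from ray-wise to global minimization by Lemma~\ref{l5}.

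Fix $u\in C_c^\infty(\mathbb R^N\setminus\{o\})\setminus\{0\}$ and set $\lambda=\lambda_p(u)$. First I check that $\lambda>0$. If $\mathcal R_K(u)\equiv0$, then $u$ is constant on each ray, and compact support forces $u=0$. Write
\[
D=\int_{\mathbb R^N}|\mathcal R_K(u)|^pr^{-bp}dx,\quad I=\int_{\mathbb R^N}|u|^pr^{-ap}dx,\quad J=\int_{\mathbb R^N}|u|^pr^{-(p-1)a-b-1}dx.
\]
In polar coordinates, integration by parts on each ray gives
\[
p\int_{\mathbb R^N}|u|^{p-2}u\,\partial_ru\;r^{-(p-1)a-b}dx=-(N-1-(p-1)a-b)J .
\]
Now take $t=\partial_ru$ and $s=-\operatorname{sgn}(b+1-a)\lambda r^{b-a}u$ in $R_p(s,t)$. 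The weights combine as $r^{-bp}r^{(b-a)p}=r^{-ap}$ and $r^{-bp}r^{(b-a)(p-1)}=r^{-(p-1)a-b}$. Hence
\[
\int_{\mathbb R^N}R_p(s,t)\,r^{-bp}dx=D+(p-1)\lambda^pI-\lambda^{p-1}\operatorname{sgn}(b+1-a)(N-1-(p-1)a-b)J .
\]
On $\mathcal A_p$ we have $N-1-(p-1)a-b=\nu_p(b+1-a)$ with $\nu_p>0$, so the sign factor converts the last coefficient into $|N-1-(p-1)a-b|$. Using $\lambda^p=D/I$, the first two terms equal $pD=p\lambda^{p-1}D^{1/p}I^{1/p'}$. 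This yields the remainder identity
\[
p\lambda^{p-1}\,\delta_{p,K,\mathcal A_p}(u)=\int_{\mathbb R^N}R_p\bigl(-\operatorname{sgn}(b+1-a)\lambda r^{b-a}u,\ \mathcal R_K(u)\bigr)r^{-bp}dx .
\]
The main point to get right is the sign bookkeeping in this step. It must work uniformly in both components of $\mathcal A_p$, which is exactly where the hypothesis $(b+1-a)(N-p(b+1))\ge0$ enters.

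Next, Lemma~\ref{lUC} bounds the right-hand side from below:
\[
\int_{\mathbb R^N}R_p(\cdots)\,r^{-bp}dx\ \ge\ \kappa_p\int_{\partial K}\int_0^\infty\bigl|\partial_ru+\operatorname{sgn}(b+1-a)\lambda r^{b-a}u\bigr|^pr^{N-1-bp}\,dr\,d\mu_K .
\]
For each fixed $\sigma$, the function $f=u(\cdot,\sigma)$ belongs to $C_c^\infty(0,\infty)$, so Lemma~\ref{l3} applies on that ray. Integrating over $\partial K$ and dividing by $p\lambda^{p-1}$, the powers of $\lambda$ cancel and I obtain
\[
\delta_{p,K,\mathcal A_p}(u)\ \ge\ \frac{\kappa_pp^{p-2}|b+1-a|}{P_{\nu_p,p}}\int_{\partial K}\inf_{c\in\mathbb R}\int_0^\infty|u(r,\sigma)-cF_\lambda(r)|^pr^{N-(p-1)a-b-2}\,dr\,d\mu_K .
\]

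Finally, I apply Lemma~\ref{l5} with $X=\partial K$, $v=F_\lambda$ and $w=r^{N-(p-1)a-b-2}$. Its hypotheses hold: $\int_0^\infty F_\lambda^pw\,dr$ is a finite positive Gamma integral (because $\nu_p>0$), and $u$ is bounded with compact support away from $o$. Lemma~\ref{l5} identifies the right-hand side with $\inf_\Phi\int_{\mathbb R^N}|u-\Phi F_\lambda|^p r^{-(p-1)a-b-1}dx$. Since $\Phi F_\lambda\in\mathcal M_{p,K,\mathcal A_p}$ (take $\eta=\lambda$), this infimum is at least $d_{p,K,\mathcal A_p}(u,\mathcal M_{p,K,\mathcal A_p})^p$, which gives \eqref{eAcompletion}.
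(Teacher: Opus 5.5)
Your proposal is correct and follows essentially the same route as the paper. Your remainder identity $\int_{\mathbb R^N} R_p\bigl(-\operatorname{sgn}(b+1-a)\lambda r^{b-a}u,\mathcal R_K(u)\bigr)r^{-bp}dx=p\lambda^{p-1}\delta_{p,K,\mathcal A_p}(u)$ is the paper's identity with the two arguments of $R_p$ rescaled by $\lambda^{1/p}$ and $\lambda^{-(p-1)/p}$, which is equivalent by the $p$-homogeneity of $R_p$, and the rest is the same chain: Lemma~\ref{lUC}, then Lemma~\ref{l3} on each ray, then Lemma~\ref{l5} with $\eta=\lambda$, with your sign check via $\nu_p>0$ and hypothesis checks for test functions being exactly what is needed.
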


\begin{proof}
For every
$u\in C_c^\infty(\mathbb R^N\setminus\{o\})\setminus\{0\}$, the next integrals are finite:
\[
D_{p,b}(u)\!=\!\!\int_{\mathbb R^N}\!\bigl|\mathcal R_K(u)\bigr|^p \|x\|_K^{-bp}dx,\
I_{p,a}(u)\!=\!\!\int_{\mathbb R^N}\!|u|^p \|x\|_K^{-ap}dx,\
I_{p,a,b}(u)\!=\!\!\int_{\mathbb R^N}\!|u|^p \|x\|_K^{-(p-1)a-b-1}dx.
\]
Since $u\ne0$, one has $I_{p,a}>0$.
If $D_{p,b}(u)=0$, $u$ is constant on almost every radial ray and compact support
forces $u=0$, a contradiction.
Hence, $\lambda_p(u)\in(0,\infty).$
Thus $\lambda=\lambda_p(u)=\Bigl(\frac{D_{p,b}(u)}{I_{p,a}(u)}\Bigr)^{\frac{1}{p}}>0$, and
\[\lambda^{-(p-1)}D_{p,b}(u)=\lambda I_{p,a}(u)=D_{p,b}(u)^{\frac{1}{p}}I_{p,a}(u)^{\frac{1}{p'}}.\]
By the homogeneity of $\|\cdot\|_K$, it is clear that $x\cdot\nabla\|x\|_K=\|x\|_K$, for every $x\neq o$.
By the integration by parts, it gives
\begin{align*}
p\int_{\mathbb R^N}
\frac{|u|^{p-2}u\mathcal R_K(u)}
{\|x\|_K^{(p-1)a+b}}dx
&=\int_{\mathbb R^N}
\frac{x\cdot \nabla |u|^{p}}
{\|x\|_K^{(p-1)a+b+1}}dx\\
&=-\big(N-1-(p-1)a-b\big)
\int_{\mathbb R^N}
\frac{|u|^p}
{\|x\|_K^{(p-1)a+b+1}}dx.
\end{align*}
Let $k=N-1-(p-1)a-b$.
Since $(a,b)\in\mathcal A_p$ and $\nu_p>0$,
$\operatorname{sgn}(b+1-a)\cdot k=|k|.$
Therefore, by the definition of $R_p$,
\begin{align*}
&\frac{1}{p}\int_{\mathbb R^N}\frac{1}{\|x\|_K^{bp}}
R_p\bigg(-\operatorname{sgn}(b+1-a)\lambda_p(u)^{\frac1p}u\|x\|_K^{b-a},\lambda_p(u)^{-\frac{p-1}{p}}\mathcal R_K(u)\bigg)dx\\
&=
\bigg(\int_{\mathbb R^N}\frac{|\mathcal R_K(u)|^p}{\|x\|_K^{bp}}dx\bigg)^{\frac1p}
\bigg(\int_{\mathbb R^N}\frac{|u|^p}{\|x\|_K^{ap}}dx\bigg)^{\frac{p-1}{p}}
-\bigg|\frac{N-1-(p-1)a-b}{p}\bigg|\int_{\mathbb R^N}\frac{|u|^p}{\|x\|_K^{(p-1)a+b+1}}dx\\
&=\delta_{p,K,\mathcal A_p}(u).
\end{align*}
By Lemma \ref{lUC}, one has
\begin{align*}
\delta_{p,K,\mathcal A_p}(u)
&\geq
\frac{\kappa_p}{p}\lambda_p(u)^{-(p-1)}
\int_{\mathbb R^N}
\frac{\big|\mathcal R_K(u)+\operatorname{sgn}(b+1-a)\lambda_p(u)\|x\|_K^{b-a}u\big|^p}
{\|x\|_K^{bp}}dx.
\end{align*}
Using the anisotropic polar formula and $\mathcal R_K(u)=\partial_r u$, one has
\begin{align*}
&\int_{\mathbb R^N}
\frac{\big|\mathcal R_K(u)+\operatorname{sgn}(b+1-a)\lambda_p(u)\|x\|_K^{b-a}u\big|^p}{\|x\|_K^{bp}}dx\\
&=\int_{\partial K}\int_0^\infty
\big|\partial_r u(r,\sigma)+\operatorname{sgn}(b+1-a)\lambda_p(u)r^{b-a}u(r,\sigma)\big|^p
r^{N-1-bp}drd\mu_K(\sigma).
\end{align*}
Therefore, one gets from Lemma \ref{l3} that
\begin{align*}
\delta_{p,K,\mathcal A_p}(u)
\ge\frac{\kappa_p p^{p-2}|b+1-a|}{P_{\nu_p,p}}
\int_{\partial K}\inf_{c\in\mathbb R}\int_0^\infty
\bigl|u(r,\sigma)-cF_{\lambda_p(u)}(r)\bigr|^p
r^{N-(p-1)a-b-2}drd\mu_K(\sigma),
\end{align*}
where $F_\lambda(r)=\exp\left(-\frac{\lambda}{|b+1-a|}r^{b+1-a}\right)\ \mathrm{for}\ \lambda>0.$
Note that for $\nu_p>0$,
\begin{align*}
\int_0^\infty
F_{\lambda_p(u)}(r)^p
r^{N-(p-1)a-b-2}dr
&=\frac{1}{|b+1-a|}
\bigg(\frac{p\lambda_p(u)}{|b+1-a|}\bigg)^{-\nu_p}\Gamma(\nu_p)\in(0,\infty).
\end{align*}
For $u\in C_c^\infty(\mathbb R^N\setminus\{o\}),$
one has
\begin{align*}
\int_{\mathbb R^N}\frac{|u|^p}{\|x\|_K^{(p-1)a+b+1}}dx<\infty.
\end{align*}
By Lemma \ref{lApembedding}, this finiteness holds for $u\in\mathcal C_{K,a,b}^p$.
Thus, both hypotheses of Lemma \ref{l5}
are satisfied.
Applying Lemma \ref{l5}, we obtain
\begin{align*}
\delta_{p,K,\mathcal A_p}(u)\geq\frac{\kappa_p p^{p-2}|b+1-a|}{P_{\nu_p,p}}
\inf_{\Phi\in L^p(\partial K,d\mu_K)}
\int_{\mathbb R^N}
\frac{\big|u(x)-\Phi\big(\sigma_K(x)\big)F_{\lambda_p(u)}(\|x\|_K)\big|^p}
{\|x\|_K^{(p-1)a+b+1}}dx.
\end{align*}
This is the desired result.
\end{proof}

To extend the result to $\mathcal C^p_{K,a,b}$,
we prove the following embedding lemma.

\begin{lemma}\label{lApembedding}
Let $1<p<\infty$, $K\in\Ksc$ and
$(a,b)\in\mathcal A_p$. Then $\mathcal C_{K,a,b}^p$ is continuously embedded into
$L^p\Big(\mathbb R^N,\frac{dx}{\|x\|_K^{(p-1)a+b+1}}\Big).$
More precisely,
\begin{align}\label{eApembedding}
\int_{\mathbb R^N}
\frac{|u|^p}
{\|x\|_K^{(p-1)a+b+1}}dx
\leq \frac{1}{|N-1-(p-1)a-b|}
\bigg[\int_{\mathbb R^N}\frac{|\mathcal R_K(u)|^p}{\|x\|_K^{bp}}dx
+\bigl(p-1\bigr)\int_{\mathbb R^N}\frac{|u|^p}{\|x\|_K^{ap}}dx\bigg]
\end{align}
for every $u\in\mathcal C_{K,a,b}^p$.
\end{lemma}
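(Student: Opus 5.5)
We prove \eqref{eApembedding} first for $u\in C_c^\infty(\mathbb R^N\setminus\{o\})$ and then extend it by density. For a test function $u$, set $k=N-1-(p-1)a-b$.

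\textbf{Step 1: the integration-by-parts identity.} The computation already carried out in the proof of Theorem~\ref{tAcompletion} uses only $x\cdot\nabla\|x\|_K=\|x\|_K$ and the compact support of $u$. It gives
\[
k\int_{\mathbb R^N}\frac{|u|^p}{\|x\|_K^{(p-1)a+b+1}}dx=-p\int_{\mathbb R^N}\frac{|u|^{p-2}u\,\mathcal R_K(u)}{\|x\|_K^{(p-1)a+b}}dx .
\]
Since $(a,b)\in\mathcal A_p$, we have $\nu_p=k/(b+1-a)\ge p-1>0$. Hence $k\neq0$ and $\operatorname{sgn}(b+1-a)k=|k|$; only $|k|>0$ is needed below.

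\textbf{Step 2: Young's inequality.} Take absolute values and split the weight as
\[
\|x\|_K^{-(p-1)a-b}=\|x\|_K^{-b}\cdot\|x\|_K^{-(p-1)a}.
\]
The integrand is then bounded by $|\mathcal R_K(u)|\,\|x\|_K^{-b}$ times $\bigl(|u|\,\|x\|_K^{-a}\bigr)^{p-1}$. Young's inequality $pAB^{p-1}\le A^p+(p-1)B^p$ gives
\[
|k|\int_{\mathbb R^N}\frac{|u|^p}{\|x\|_K^{(p-1)a+b+1}}dx\le \int_{\mathbb R^N}\frac{|\mathcal R_K(u)|^p}{\|x\|_K^{bp}}dx+(p-1)\int_{\mathbb R^N}\frac{|u|^p}{\|x\|_K^{ap}}dx .
\]
Dividing by $|k|$ yields \eqref{eApembedding} on the test class. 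The right-hand side is at most $(p-1+1)\|u\|^p_{\mathcal C^p_{K,a,b}}$, so the map $u\mapsto u$ is bounded from the test class, with the $\mathcal C^p_{K,a,b}$ norm, into $L^p(\mathbb R^N,\|x\|_K^{-(p-1)a-b-1}dx)$.

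\textbf{Step 3: extension to $\mathcal C^p_{K,a,b}$.} This step requires some care, because elements of the completion are abstract limits. Let $u\in\mathcal C^p_{K,a,b}$ and take test functions $u_j\to u$ in $\mathcal C^p_{K,a,b}$.
\begin{itemize}
\item By Step 2 applied to $u_j-u_l$, the sequence $(u_j)$ is Cauchy in the weighted space $L^p(\mathbb R^N,\|x\|_K^{-(p-1)a-b-1}dx)$, so it converges there to some $w$.
\item By the definition of the completion norm, $u_j\to u$ in $L^p(\mathbb R^N,\|x\|_K^{-ap}dx)$.
\item Both weights are bounded above and below on compact subsets of $\mathbb R^N\setminus\{o\}$. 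So both convergences imply $L^p_{\rm loc}(\mathbb R^N\setminus\{o\})$ convergence, and $w=u$ almost everywhere, where $u$ denotes the unique $L^p_{\rm loc}$ representative recalled in the introduction.
\end{itemize}
Finally, pass to the limit in \eqref{eApembedding} for $u_j$. The left side converges because of the weighted $L^p$ convergence to $u$. Both terms on the right converge by the definition of the norm, since $\mathcal R_K(u_j)\to\mathcal R_K(u)$ in $L^p(\|x\|_K^{-bp}dx)$. This gives \eqref{eApembedding} for every $u\in\mathcal C^p_{K,a,b}$ and hence the continuous embedding.
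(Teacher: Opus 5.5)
Your proposal is correct and follows essentially the same route as the paper: integration by parts, then Young's inequality, then density. The paper quotes the anisotropic $L^p$-CKN inequality (integration by parts plus H\"older) and applies Young to the resulting product of integrals, whereas you apply Young pointwise right after the identity; this gives the identical bound, and your Step~3 spells out the approximation argument the paper compresses into one line.
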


\begin{proof}
For $u\in C_c^\infty(\mathbb R^N\setminus\{o\}),$
the anisotropic $L^p$-CKN inequality gives
\begin{align*}
\frac{|N-1-(p-1)a-b|}{p}
\int_{\mathbb R^N}\frac{|u|^p}{\|x\|_K^{(p-1)a+b+1}}dx
\leq
\bigg(\int_{\mathbb R^N}\frac{|\mathcal R_K(u)|^p}{\|x\|_K^{bp}}dx\bigg)^{\frac1p}
\bigg(\int_{\mathbb R^N}\frac{|u|^p}{\|x\|_K^{ap}}dx\bigg)^{\frac{p-1}{p}}.
\end{align*}
Young's inequality implies \eqref{eApembedding} for $u\in C_c^\infty(\mathbb R^N\setminus\{o\})$.
Since $N-1-(p-1)a-b\neq0$ for $(a,b)\in\mathcal A_p$,
\eqref{eApembedding} holds for all $u\in\mathcal C_{K,a,b}^p$ by the approximation argument.
\end{proof}

We are now in the position to show Theorem \ref{tA}.

\begin{proof}[Proof of Theorem \ref{tA}]
Let $u\in\mathcal C^p_{K,a,b}\setminus\{0\}$.
Choose $u_j\in C_c^\infty(\RN\setminus\{o\})$
such that $u_j\to u$ in $\mathcal C^p_{K,a,b}$ as $j\to\infty$.
The two integrals in the norm of $\mathcal C^p_{K,a,b}$
converge as $j\to\infty$.
Then Lemma~\ref{lApembedding} implies the convergence $u_j\to u$ in
$L^p(\RN,\hK{x}^{-(p-1)a-b-1}dx)$ as $j\to\infty$.
Therefore, $\delta_{p,K,\mathcal A_p}(u_j)\to \delta_{p,K,\mathcal A_p}(u)$ as $j\to\infty$.
We also have $|d_{p,K,\mathcal A_p}\bigl(u_j,\mathcal M_{p,K,\mathcal A_p}\bigr)-d_{p,K,\mathcal A_p}\bigl(u,\mathcal M_{p,K,\mathcal A_p}\bigr)|\leq\|u_j-u\|_{L^p(\mathbb R^n,{\|x\|_K^{-(p-1)a-b-1}}dx)}\to0$ as $j\to\infty$.
For sufficiently large $j$, we have $u_j\ne0$.
Therefore, we get Theorem~\ref{tA}
by applying Theorem~\ref{tAcompletion} to $u_j$
and letting $j\to\infty$.
\end{proof}

\subsection{Heisenberg type inequalities}\label{s3}

This subsection uses the weighted $L^p$-Poincar\'e inequality
for Gamma measures to obtain the anisotropic radial Heisenberg stability estimates.
See, e.g., \cite{DGLLHyperbolic} for the Heisenberg stability
by the classical weighted Poincar\'e inequalities.

The following estimate follows from a change of variables
and Lemma~\ref{l2}.

\begin{lemma}
\label{lH}
Let $1<p<\infty$ and $\lambda>0$. Then, for
$f\in C_c^\infty(0,\infty)$,
\begin{align}\label{eH1}
\int_0^\infty\big|f'(r)+\lambda r^{\frac{1}{p-1}}f(r)\big|^pr^{N-1}dr
\ge\frac{p'(p\lambda)^{p-1}}{P_{\nu_H,p}}
\inf_{c\in\mathbb R}\int_0^\infty\bigl|f(r)-cH_\lambda(r)\bigr|^pr^{N-1}dr,
\end{align}
where $\nu_H=\frac{N}{p'}$, and $H_\lambda(r)=\exp\big(-\frac{\lambda}{p'}r^{p'}\big)$ for $r>0$.
\end{lemma}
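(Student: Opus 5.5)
Following the proof of Lemma~\ref{l3}, we factor out the extremal profile. Write $f=H_\lambda g$ with $g\in C_c^\infty(0,\infty)$. Since $H_\lambda'(r)=-\lambda r^{p'-1}H_\lambda(r)$ and $p'-1=\frac{1}{p-1}$, we have $H_\lambda'+\lambda r^{\frac1{p-1}}H_\lambda=0$. Hence
\[
f'+\lambda r^{\frac1{p-1}}f=H_\lambda g',\qquad f-cH_\lambda=H_\lambda(g-c).
\]
Both sides of \eqref{eH1} are then integrals of $|g'|^p$ and $|g-c|^p$ against $H_\lambda^p r^{N-1}dr$, where $H_\lambda^p=\exp\bigl(-\frac{p\lambda}{p'}r^{p'}\bigr)$.

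Next, substitute $s=\frac{p\lambda}{p'}r^{p'}=(p-1)\lambda r^{p'}$ and set $h(s)=g(r)$, so that $h\in C_c^\infty(0,\infty)$ and $H_\lambda^p=e^{-s}$. Let $\alpha=(p-1)\lambda$. Then $r=(s/\alpha)^{1/p'}$, $ds=p'\alpha r^{p'-1}dr$, and $r^{N-1}dr=\frac{1}{p'}\alpha^{-N/p'}s^{\nu_H-1}ds$ with $\nu_H=N/p'$.

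For the derivative term, $g'(r)=h'(s)\,p'\alpha r^{p'-1}$. Since $(p'-1)p=p'$, we get $|g'|^p=|h'|^p(p'\alpha)^p r^{p'}=|h'|^p (p'\alpha)^p s/\alpha$. Note that $p'\alpha=p\lambda$, so
\[
\int_0^\infty|g'|^pH_\lambda^pr^{N-1}dr=\frac{(p\lambda)^p}{p'\alpha}\,\alpha^{-\nu_H}\int_0^\infty s|h'|^ps^{\nu_H-1}e^{-s}ds=(p\lambda)^{p-1}\alpha^{-\nu_H}\int_0^\infty s|h'|^ps^{\nu_H-1}e^{-s}ds.
\]
For the other term, $\int_0^\infty|g-c|^pH_\lambda^pr^{N-1}dr=\frac{1}{p'}\alpha^{-\nu_H}\int_0^\infty|h-c|^ps^{\nu_H-1}e^{-s}ds$.

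Since $\nu_H=N/p'>0$, Lemma~\ref{l2} applies with $\nu=\nu_H$. It bounds $\inf_c\int|h-c|^ps^{\nu_H-1}e^{-s}ds$ by $P_{\nu_H,p}\int s|h'|^ps^{\nu_H-1}e^{-s}ds$. Combining this with the two identities and cancelling the common factor $\alpha^{-\nu_H}$ gives \eqref{eH1} with constant $\frac{p'(p\lambda)^{p-1}}{P_{\nu_H,p}}$. Here the infimum over $c$ commutes with the change of variables because $c$ enters only through $g-c$.

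No real obstacle is expected. The only care needed is the exponent bookkeeping above, in particular $(p'-1)p=p'$ and $p'(p-1)=p$, which is what produces exactly the factor $(p\lambda)^{p-1}$ and the extra $p'$ in the constant.
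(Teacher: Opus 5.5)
Your proposal is correct and follows essentially the same route as the paper. Both factor $f=H_\lambda g$ and substitute $s=\frac{p\lambda}{p'}r^{p'}$, which yields the same two scaling identities with the common factor $\bigl(\frac{p\lambda}{p'}\bigr)^{-\nu_H}$, and then apply Lemma~\ref{l2} with $\nu=\nu_H$.
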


\begin{proof}
For $f\in C_c^\infty(0,\infty)$, we define $g$ by $f=H_\lambda g$.
Since $H_\lambda'+\lambda r^{\frac{1}{p-1}}H_\lambda=0$,
we have, for every $c\in\mathbb R$,
\begin{equation}\label{eq5.2}
f'+\lambda r^{\frac{1}{p-1}}f=H_\lambda g',\quad
f-cH_\lambda=H_\lambda\bigl(g-c\bigr).
\end{equation}
We consider the function $h(s):=g(r)$ with
$s=\bigl(\frac{p\lambda}{p'}\bigr)r^{p'}$.
Then $H_\lambda^p=e^{-s}$ and $h\in C_c^\infty(0,\infty)$.
By the change of variables $s=\bigl(\frac{p\lambda}{p'}\bigr)r^{p'}$, one has
\begin{align*}
\int_0^\infty|g'|^pH_\lambda^p r^{N-1}dr
&=(p\lambda)^{p-1}
\biggl(\frac{p\lambda}{p'}\biggr)^{-\nu_H}
\int_0^\infty s|h'|^p s^{\nu_H-1}e^{-s}ds,\\
\int_0^\infty|g-c|^pH_\lambda^p r^{N-1}dr
&=\frac1{p'}\biggl(\frac{p\lambda}{p'}\biggr)^{-\nu_H}
\int_0^\infty|h-c|^p s^{\nu_H-1}e^{-s}ds.
\end{align*}
By $\nu_H=\frac{N}{p'}>0$, Lemma~\ref{l2} implies
\begin{align*}
\int_0^\infty|g'|^pH_\lambda^p r^{N-1}dr
\geq \frac{p'(p\lambda)^{p-1}}{P_{\nu_H,p}}
\inf_{c\in\mathbb R}\int_0^\infty|g-c|^pH_\lambda^p r^{N-1}dr,
\end{align*}
This, together with \eqref{eq5.2}, implies \eqref{eH1}.
\end{proof}

Using this lemma, we first give the $L^p$ Heisenberg stability estimates
on $C_c^\infty(\RN\setminus\{o\})$,
and then extend them to the class $\mathcal H_K^p$.

\begin{theorem}\label{t2}
Let $2\le p<\infty$ and $K\in\Ksc$.
\begin{enumerate}[label=\textup{(\roman*)}]
\item
For every $u\in C_c^\infty(\RN\setminus\{o\})$, one has
\begin{equation}\label{eHn}
\delta_{p,K,H,non}(u)
\ge
\frac{\kappa_p p'p^{p-1}}{P_{\nu_H,p}}
\inf_{v\in\mathcal M_{p,K,H,non}}\|u-v\|^p_{L^p(\mathbb R^N)}.
\end{equation}
\item
For every
$u\in C_c^\infty(\RN\setminus\{o\})\setminus\{0\}$,
one has
\begin{equation}\label{eHs}
\delta_{p,K,H}(u)
\geq
\frac{\kappa_p p'p^{p-2}}{P_{\nu_H,p}}
\inf_{v\in\mathcal M_{p,K,H}}\|u-v\|^p_{L^p(\mathbb R^N)}.
\end{equation}
\end{enumerate}
\end{theorem}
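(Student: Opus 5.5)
The plan is to follow the proof of Theorem~\ref{tAcompletion} almost verbatim, replacing Lemma~\ref{l3} by Lemma~\ref{lH}. Fix $u\in C_c^\infty(\RN\setminus\{o\})$ and write
\[
D=\int_{\RN}|\mathcal R_K(u)|^pdx,\qquad J=\int_{\RN}r^{p'}|u|^pdx,\qquad I=\int_{\RN}|u|^pdx .
\]
Since $x\cdot\nabla r=r$, integrating by parts gives
\[
p\int_{\RN}|u|^{p-2}u\,\mathcal R_K(u)\,r^{\frac1{p-1}}dx=\int_{\RN}r^{p'-1}\,\frac{x\cdot\nabla|u|^p}{r}\,dx=-(N-1+p'-1)\int_{\RN} r^{p'-2}|u|^p\,dx .
\]
This is not directly $I$, so I will instead use the cleaner identity
\[
p\int_{\RN}|u|^{p-2}u\,\mathcal R_K(u)\,r^{\frac1{p-1}}\cdot r^{-\frac1{p-1}}\,r\,dx .
\]
Concretely, I take $s=\lambda r^{\frac1{p-1}}u$ in the Bregman divergence. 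Then $|s|^p=\lambda^p r^{p'}|u|^p$ and $|s|^{p-2}s\,t=\lambda^{p-1}r\,|u|^{p-2}u\,\mathcal R_K(u)$. Since $\int_{\RN} r\,|u|^{p-2}u\,\partial_r u\,dx=-\frac Np I$ (integrate $\partial_r|u|^p\, r^{N}$ along each ray), this gives the identity
\[
\int_{\RN}R_p\bigl(-\lambda r^{\frac1{p-1}}u,\mathcal R_K(u)\bigr)dx=D+(p-1)\lambda^pJ-N\lambda^{p-1}I .
\]

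For part (i), take $\lambda=1$. The right-hand side is exactly $\delta_{p,K,H,non}(u)$. Lemma~\ref{lUC} then gives
\[
\delta_{p,K,H,non}(u)\ge\kappa_p\int_{\RN}\bigl|\mathcal R_K(u)+r^{\frac1{p-1}}u\bigr|^pdx .
\]
Passing to anisotropic polar coordinates, I apply Lemma~\ref{lH} with $\lambda=1$ on each ray; the lemma applies since $u(\cdot,\sigma)\in C_c^\infty(0,\infty)$. This gives
\[
\delta_{p,K,H,non}(u)\ge\frac{\kappa_pp'p^{p-1}}{P_{\nu_H,p}}\int_{\partial K}\inf_c\int_0^\infty|u-cH_1|^pr^{N-1}dr\,d\mu_K .
\]
Finally, Lemma~\ref{l5} with $v=H_1$ and $w=r^{N-1}$ converts the angular infimum into the infimum over $\Phi\in L^p(\partial K,d\mu_K)$. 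Its hypotheses hold because $\int_0^\infty H_1^pr^{N-1}dr<\infty$ and $u\in L^p$.

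For part (ii), take $\lambda=\lambda_H:=(D/J)^{1/p}$, which is positive because compact support forbids $D=0$ for $u\ne0$. Dividing the identity by $p\lambda^{p-1}$ and using $\lambda^{1-p}D=\lambda J=D^{1/p}J^{1/p'}$, the quantity $\frac1{p\lambda^{p-1}}\bigl(D+(p-1)\lambda^pJ\bigr)$ becomes $D^{1/p}J^{1/p'}$. Hence
\[
\frac{1}{p\lambda^{p-1}}\int_{\RN} R_p\,dx=\delta_{p,K,H}(u).
\]
Lemma~\ref{lUC} and Lemma~\ref{lH} with this $\lambda$ then yield the constant
\[
\frac{\kappa_p}{p\lambda^{p-1}}\cdot\frac{p'(p\lambda)^{p-1}}{P_{\nu_H,p}}=\frac{\kappa_pp'p^{p-2}}{P_{\nu_H,p}},
\]
and the $\lambda$-dependence cancels. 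Lemma~\ref{l5} with $v=H_{\lambda}$ again gives the infimum over $\Phi$, and enlarging the infimum over all $\eta>0$ only decreases the right side.

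The main point needing care is the integration-by-parts identity $\int r|u|^{p-2}u\,\partial_ru\,r^{N-1}dr=-\frac Np\int|u|^pr^{N-1}dr$. It is valid for all $p\ge2$ (and $N\ge2$) on compactly supported functions away from the origin, with no restriction like $p\le N$. This is exactly why, unlike Theorem~\ref{tAcompletion}, no region $\mathcal A_p$ is needed: $\nu_H=N/p'>0$ always, so Lemma~\ref{l2} applies.
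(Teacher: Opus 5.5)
Your proposal is correct. Part (i) is the paper's proof step for step. You use the Bregman identity with $s=-r^{\frac1{p-1}}u$, which rests on $\int_{\RN}|u|^{p-2}u\,x\cdot\nabla u\,dx=-\frac Np\int_{\RN}|u|^pdx$. Then come Lemma~\ref{lUC}, Lemma~\ref{lH} with $\lambda=1$ on each ray, and Lemma~\ref{l5} with $v=H_1$, $w=r^{N-1}$. Your first displayed integration by parts, with the weight $r^{p'-2}$, is a dead end and can simply be deleted.

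For part (ii) you take a different route from the paper. You keep the scale $\lambda=\lambda_H(u)$ inside the Bregman divergence and verify that $\frac1{p\lambda^{p-1}}\int_{\RN}R_p\bigl(-\lambda r^{\frac1{p-1}}u,\mathcal R_K(u)\bigr)dx=\delta_{p,K,H}(u)$. You then apply Lemma~\ref{lH} at that $\lambda$, so that $\lambda^{p-1}$ cancels. This is exactly the scheme the paper uses for Theorem~\ref{tAcompletion}.

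The paper instead reduces (ii) to (i) by the dilation $u_t(x)=u(tx)$ with $t=\lambda_H(u)^{-1/p'}$. This normalizes $\lambda_H(u_t)=1$ and gives $\delta_{p,K,H,non}(u_t)=p\,\delta_{p,K,H}(u_t)=p\,t^{-N}\delta_{p,K,H}(u)$. The paper then rescales the $L^p$ distance back, which turns the fixed profile $e^{-r^{p'}/p'}$ into $e^{-\lambda_H(u)r^{p'}/p'}$.

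Your version avoids the homogeneity bookkeeping and makes the cancellation of $\lambda$ explicit. The paper's version needs Lemma~\ref{lH} only at $\lambda=1$. It also shows that the scale-invariant deficit is just the scale-noninvariant deficit evaluated at the balanced dilate.

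Both routes give the same constant $\kappa_pp'p^{p-2}/P_{\nu_H,p}$. Both also give the same intermediate bound with the specific scale $\eta=\lambda_H(u)$, before the infimum over all $\eta>0$ is taken.
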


\begin{proof}
We first prove part \textup{(i)}.
By the divergence theorem,
\begin{align*}
N\int_{\mathbb R^N}|u|^pdx
=
-p\int_{\mathbb R^N}
|u|^{p-2}u\,x\cdot\nabla u dx.
\end{align*}
Therefore, it follows from $x\cdot\nabla u=\|x\|_K\mathcal R_K(u)$,
the definition of $R_p$, Lemma~\ref{lUC}, and anisotropic polar coordinates that
\begin{align*}
\delta_{p,K,H,non}(u)&=\int_{\mathbb R^N}
\bigl|\mathcal R_K(u)\bigr|^pdx
+\bigl(p-1\bigr)\int_{\mathbb R^N}\|x\|_K^{p'}|u|^pdx+p\int_{\mathbb R^N}|u|^{p-2}u\,x\cdot\nabla udx\\
&=\int_{\mathbb R^N}
R_p\bigg(-u\|x\|_K^{\frac{1}{p-1}},\mathcal R_K(u)\bigg)dx\\
&\ge
\kappa_p
\int_{\mathbb R^N}
\bigg|\mathcal R_K(u)+\|x\|_K^{\frac{1}{p-1}}u\bigg|^pdx\\
&=\kappa_p\int_{\partial K}\int_0^\infty\bigg|\partial_ru(r,\sigma)
+r^{\frac{1}{p-1}}u(r,\sigma)\bigg|^p
r^{N-1}drd\mu_K(\sigma).
\end{align*}
Applying Lemma \ref{lH} with $\lambda=1$ and
$f(r)=u(r,\sigma)$ for every fixed $\sigma\in\partial K$, it gives
\begin{align*}
\int_0^\infty\big|\partial_ru(r,\sigma)+r^{\frac{1}{p-1}}u(r,\sigma)\big|^pr^{N-1}dr
\ge \frac{p'p^{p-1}}{P_{\nu_H,p}}\inf_{c\in\mathbb R}
\int_0^\infty\big|u(r,\sigma)-cH_1(r)\big|^pr^{N-1}dr.
\end{align*}
By the definition of $H_1$ and $\nu_H>0$, it is clear that
\begin{align*}
\int_0^\infty
H_1(r)^p r^{N-1}dr
=\frac{1}{p'}\Big(\frac{p}{p'}\Big)^{-\nu_H}\Gamma(\nu_H)
\in(0,\infty).
\end{align*}
Then, the assumptions of Lemma \ref{l5} are satisfied.
By Lemma \ref{l5} with $v(r)=H_1(r)$ and $w(r)=r^{N-1},$ we obtain
\begin{align*}
&\int_{\partial K}\inf_{c\in\mathbb R}\int_0^\infty
\bigl|u(r,\sigma)-cH_1(r)\bigr|^pr^{N-1}drd\mu_K(\sigma)\\
&=\inf_{\Phi\in L^p(\partial K,d\mu_K)}
\int_{\partial K}\int_0^\infty\bigl|u(r,\sigma)-\Phi(\sigma)H_1(r)\bigr|^p
r^{N-1}drd\mu_K(\sigma)\\
&=\inf_{v\in\mathcal M_{p,K,H,non}}\|u-v\|^p_{L^p(\mathbb R^N)}.
\end{align*}
Combining with the preceding estimates, this proves \eqref{eHn}.

We now prove part \textup{(ii)} by a dilation normalization.
For $u\in C_c^\infty(\RN\setminus\{o\})\setminus\{0\}$, we set
\[
\lambda_H(u)=\biggl(\frac{\int_{\RN}\bigl|\mathcal R_K(u)\bigr|^pdx}
{\int_{\RN}\hK{x}^{p'}|u|^pdx}\biggr)^{\frac{1}{p}},\quad
t=\lambda_H(u)^{-\frac{1}{p'}},\quad u_t(x)=u(tx).
\]
Both integrals in $\lambda_H(u)$ are positive.
Otherwise, a vanishing radial derivative would make $u$ constant
for almost every radial ray, then zero by compact support,
which contradicts the assumption.
By the homogeneity, it gives
\begin{align*}
\int_{\RN}\bigl|\mathcal R_K(u_t)\bigr|^pdx=t^{p-N}\int_{\RN}\bigl|\mathcal R_K(u)\bigr|^pdx\ \ \text{and}\
\int_{\RN}\hK{x}^{p'}|u_t|^pdx=t^{-N-p'}\int_{\RN}\hK{x}^{p'}|u|^pdx.
\end{align*}
Thus, $\lambda_H(u_t)=t^{p'}\lambda_H(u)=1$ and
$\delta_{p,K,H,non}(u_t)=p\delta_{p,K,H}(u_t)
=pt^{-N}\delta_{p,K,H}(u).$
On the other hand, it follows from $t=\lambda_H(u)^{-\frac{1}{p'}}$ that
\begin{align*}
\inf_{v\in\mathcal M_{p,K,H,non}}\|u_t-v\|^p_{L^p(\mathbb R^N)}
&=t^{-N}\inf_{\Phi\in L^p(\partial K,d\mu_K)}
\int_{\RN}\bigg|u(x)-\Phi(\sigma_K(x))
 e^{-\frac{\lambda_H(u)\hK{x}^{p'}}{p'}}\bigg|^pdx\\
&\ge t^{-N}\inf_{v\in\mathcal M_{p,K,H}}\|u-v\|^p_{L^p(\mathbb R^N)}.
\end{align*}
Therefore, we get \eqref{eHs} by applying the result (i) with $u_t$.
\end{proof}

We next prove Theorem \ref{tHcompletion},
which extends the above stability estimates to the space $\mathcal H_K^p$
by the density argument.

\begin{proof}[Proof of Theorem \ref{tHcompletion}]
Let $w\in C_c^\infty(\RN\setminus\{o\})$.
Let $\mathcal H_K^p=\mathcal C^p_{K,-\frac{1}{p-1},0}$
with the norm
$\|w\|_{\mathcal H_K^p}^{p}=\int_{\RN}|\mathcal R_K(w)|^pdx
+(p-1)\int_{\RN}\|x\|_K^{p'}|w|^pdx$.
From integration by parts and Young's inequality, it follows
\begin{align*}
N\int_{\RN}|w|^pdx
=-p\int_{\RN}|w|^{p-2}w\,x\cdot\nabla w dx
&\leq \int_{\RN}|\mathcal R_K(w)|^pdx
+(p-1)\int_{\RN}\|x\|_K^{p'}|w|^pdx\\
&\leq\max\{1,p-1\}\|w\|_{\mathcal H_K^p}^p.
\end{align*}
By the arbitrariness of $w$, it follows
$\mathcal H_K^p\hookrightarrow L^p(\RN).$

The result in part (i) is trivial for $u=0$.
Let $u\in\mathcal H_K^p\setminus\{0\}$, and choose
$u_j\in C_c^\infty(\RN\setminus\{o\})$ such that
$\|u_j-u\|_{\mathcal H_K^p}\to0$ as $j\to\infty.$
By $\mathcal H_K^p\hookrightarrow L^p(\RN),$ we obtain
$\|u_j-u\|_{L^p(\RN)}\to0,$ as $j\to\infty.$
Consequently, as $j\to\infty$,
\begin{align*}
\int_{\RN}|\mathcal R_K(u_j)|^pdx
\to
\int_{\RN}|\mathcal R_K(u)|^pdx,
\
\int_{\RN}r^{p'}|u_j|^pdx
\to
\int_{\RN}r^{p'}|u|^pdx,
\
\int_{\RN}|u_j|^pdx
\to
\int_{\RN}|u|^pdx.
\end{align*}
Hence, $\delta_{p,K,H,non}(u_j)\to\delta_{p,K,H,non}(u),$
and $\delta_{p,K,H}(u_j)\to\delta_{p,K,H}(u)$, as $j\to\infty$.

For
$\mathcal M\in
\{\mathcal M_{p,K,H,non},\mathcal M_{p,K,H}\}$,
the triangle inequality gives
\begin{align*}
\Big|
\inf_{v\in\mathcal M}\|u_j-v\|_{L^p(\RN)}
-
\inf_{v\in\mathcal M}\|u-v\|_{L^p(\RN)}
\Big|
\le
\|u_j-u\|_{L^p(\RN)}
\to0
\quad\text{as }j\to\infty.
\end{align*}
Therefore,
$\inf_{v\in\mathcal M}\|u_j-v\|_{L^p(\RN)}^p
\to\inf_{v\in\mathcal M}\|u-v\|_{L^p(\RN)}^p$,
as $j\to\infty.$
Since $u\neq0$, we have $u_j\ne0$ for all sufficiently
large $j$.
Applying Theorem~\ref{t2} to $u_j$ and letting
$j\to\infty$, we obtain both results.
\end{proof}

\begin{remark}\label{rem:trace}
When $p>N$, the completion $\mathcal H_K^p$ imposes a zero radial trace at the origin.
Indeed, H\"older's inequality shows that, for $u\in C_c^\infty(\RN\setminus\{o\})$,
\begin{align*}
|u(r,\sigma)|&=\left|\int_0^r\partial_su(s,\sigma)ds\right|\leq\int_0^rs^{-\frac{N-1}{p}}\left|\partial_su(s,\sigma)\right|s^{\frac{N-1}{p}}ds\\
&\leq \left(\int_0^rs^{-\frac{N-1}{p-1}}ds\right)^{\frac{p-1}{p}}
\left(\int_0^r \left|\partial_su(s,\sigma)\right|^ps^{N-1}ds\right)^{\frac{1}{p}}
\end{align*}
Then $\bigl\|u(r,\cdot)\bigr\|_{L^p(\partial K,d\mu_K)}
\leq \Bigl(\frac{p-1}{p-N}\Bigr)^{\frac{1}{p'}}
r^{1-\frac{N}{p}}\bigl\|\mathcal R_K(u)\bigr\|_{L^p(\RN)}.$
By the density argument, this estimate holds in $u\in \mathcal H_K^p$.
Therefore, $u$ converges to zero in $L^p(\partial K,d\mu_K)$ as $r\rightarrow0^+$.

On the other hand, every element $\Phi(\sigma)e^{-\frac{\eta r^{p'}}{p'}}$ in $\mathcal M_{p,K,H}$ has radial trace $\Phi$ as $r\rightarrow0^+$.
Thus zero is the only element in $\mathcal M_{p,K,H}$
that belongs to $\mathcal H_K^p$ when $p>N$.
In this range $p>N$, the results in Theorems~\ref{t2} and~\ref{tHcompletion}
remain valid, as their proofs require only $L^p$ distances,
although the nonzero functions in
$\mathcal M_{p,K,H}$ do not belong to $\mathcal H_K^p$.
\end{remark}

\section*{Acknowledgments}
The authors would like to thank Professors Nguyen Lam and Deping Ye
for their valuable guidance, helpful comments, and support.
This work was supported by
Natural Science Basic Research Program of Shaanxi Province of China (2026JC-YBMS-0065).

\bigbreak

\noindent Zhenzhen Wei\\
Department of Mathematics and Statistics,
Memorial University of Newfoundland,
St. John's, Newfoundland and Labrador A1C 5S7, Canada\\
E-mail: {\tt zhenzhenw@mun.ca}\\

\noindent Denghui Wu\\
College of Science, Northwest A\&F University,
Yangling, Shaanxi 712100, China\\
E-mail: {\tt wudenghui66@163.com}\\

\end{document}